\documentclass{amsart}
\usepackage[utf8]{inputenc}
\usepackage{graphicx,amssymb,amsmath,amsthm}
\usepackage{caption}
\usepackage{subcaption}
\usepackage{indentfirst}
\usepackage{cancel}
\usepackage{lipsum}
\usepackage{epstopdf}
\usepackage{epsfig}
\usepackage{enumerate,color}   
\usepackage[overload]{empheq}

\newcommand{\veps}{\varepsilon}
\numberwithin{equation}{section}
\usepackage{soul}
\everymath{\displaystyle}

%%%%%%%%%%%%%
% Margins etc. 
 %
%\setlength{\textwidth}{6.5in}
%%\setlength{\headwidth}{6.5in}
%\setlength{\topmargin}{0pt}
%\setlength{\voffset}{-.25in}
%\setlength{\evensidemargin}{0pt}
%\setlength{\oddsidemargin}{0pt}
%\setlength{\hoffset}{0pt}
%\setlength{\textheight}{600pt}
%
%
%%%%%%%%%%%%%
%
%
%
%
%%%%%%%%%%%%%%
%\setlength{\textwidth}{6.5in}
%%\setlength{\headwidth}{6.5in}
%\setlength{\topmargin}{0pt}
%\setlength{\voffset}{-.25in}
%\setlength{\evensidemargin}{0pt}
%\setlength{\oddsidemargin}{0pt}
%\setlength{\hoffset}{0pt}
%\setlength{\textheight}{600pt}

\theoremstyle{plain}% default
\newtheorem{theorem}{Theorem}[section]
\newtheorem{lemma}[theorem]{Lemma}
\newtheorem{proposition}[theorem]{Proposition}

\theoremstyle{definition}

%new
 
\newtheorem*{defi*}{Definition} 
\theoremstyle{remark}

\theoremstyle{remark}

\title[Solitary waves for Euler-Poisson system]{Small amplitude limit of solitary waves for  the Euler-Poisson system}

%\author{Junsik Bae and Bongsuk Kwon
%\footnote{Corresponding author, 
%address: School of Technology Management, Ulsan National Institute of Science and Technology, San 194,
%Banyeon-ri, Eonyang-eup, Ulju-gun, Ulsan, Republic of Korea; email: bkwon@unist.ac.kr}
%}

\author[J. Bae]{Junsik Bae}
\address[JB]{Department of Mathematical Sciences, Ulsan National Institute of Science and Technology, Ulsan, 44919, Korea}
\email{jsbae@unist.ac.kr}
 
\author[B. Kwon]{Bongsuk Kwon}
\address[BK]{Department of Mathematical Sciences, Ulsan National Institute of Science and Technology, Ulsan, 44919, Korea}
\email{bkwon@unist.ac.kr}

\date{\today}
%\thanks{The work was supported in part by the
%Office of Naval Research,  by the National Science Foundation, and
% }

\subjclass{Primary:  35Q53   Secondary:  35Q86, 76B15}

\keywords{ }

%\author{Junsik Bae}

\begin{document}
 
\maketitle 
%\tableofcontents

\begin{abstract} 
The one-dimensional Euler-Poisson system arises in the study of phenomena of plasma such as plasma solitons, plasma sheaths, and double layers. When the system is rescaled by the Gardner-Morikawa transformation, the rescaled system is known to be formally approximated by the Korteweg-de Vries (KdV) equation. 
%%In light of this, we show existence of solitary wave solutions  of GMEP, and prove that they converge  to those of the associated KdV equation as the small amplitude parameter tends to zero. 
%We investigate existence and the asymptotic behavior in the small
%amplitude limit of solitary wave solutions for the Euler-Poisson system on $\mathbb{R}$ (EP).
%Such equations arise in the study of phenomena of plasma such as plasma
%sheaths and double layers.
In light of this, we show existence of solitary wave solutions of the Euler-Poisson system in the stretched moving frame given by the transformation, and prove that they converge to the solitary wave solution of the associated KdV equation as the small amplitude parameter tends to zero. 
%Our results assert that the formal KdV approximation for the Euler-Poisson system is mathematically valid in the presence of solitary waves. 
Our results assert that the formal expansion for the rescaled system is mathematically valid in the presence of solitary waves and justify Sagdeev's formal approximation for the solitary wave solutions of the pressureless Euler-Poisson system. Our work extends to the isothermal case.\\

\noindent{\it Keywords}:
Euler-Poisson system; Korteweg–de Vries equation; Solitary wave
%extends to the isothermal case.
\end{abstract}

\section{Introduction}
%We investigate existence and the asymptotic behavior in the small amplitude limit ($\veps\to0$) of solitary wave solutions for the Euler-Poisson system
We consider the Euler-Poisson system on $\mathbb{R}$
\begin{equation}\tag{EP}\label{EP2}
\left\{
\begin{array}{l l}
\partial_{t} n + \partial_{x}(nu) = 0, \\ 
\partial_{t} u  + u\partial_{x} u + \sigma\frac{\partial_{x} n}{n} = -\partial_{x}\phi, \\
\partial_{x}^2\phi = e^\phi - n,
\end{array} 
\right.
\end{equation}
with the far-field condition
\begin{equation}\tag{BC}\label{bdCon x}
n \to 1, \quad u \to 0, \quad \phi \to 0 \quad \text{as} \quad |x| \to \infty,
\end{equation}
where $\sigma \geq 0$ is a constant of temperature. The system  \eqref{EP2} is referred to as the  \textit{pressureless} Euler-Poisson system when $\sigma=0$, and the \textit{isothermal} Euler-Poisson system when $\sigma >0$, respectively.  % and $\veps>0$ is the parameter of amplitude of waves.  
%More specifically, 

We prove that for some $V,\gamma>0$ and sufficiently small $\veps>0$, \eqref{EP2}--\eqref{bdCon x} admits a unique (up to shift)  smooth traveling wave solution of the form $(n,u,\phi)(\xi)$, where
\begin{equation}\label{GMF}
\xi :=\veps^{1/2}\left( x-(V+\gamma\veps) t \right),
\end{equation}
and that the solutions converge to the solitary wave solution of the associated Korteweg–de Vries (KdV) equation  as $\veps \to 0$.
%, obtained by a formal approximation. 
More specifically, for $V=\sqrt{1+\sigma}$,  \textit{the speed of ion sound},
%\begin{equation}\tag{IS} \label{V}
%V=\sqrt{1+\sigma},
%\end{equation}
 there exist positive constants $\veps_1$, $C$ and $\alpha$ such that for all $\veps\in(0,\veps_1)$, there holds 
\begin{equation}\label{Main Result}
\sup_{\xi\in\mathbb{R}}\left[ \left( |n-1-\veps n_{\text{KdV}}| + |u-\veps V n_{\text{KdV}}| + |\phi - \veps n_{\text{KdV}}| \right) e^{\alpha|\xi|/2} \right] \leq  C \veps^2,
\end{equation}
%for some $C>0$ independent of  $\veps$.
where $C$ and $\alpha$ are independent of $\veps$ and $\xi$, and
\begin{equation}\label{solutionKdV} 
n_{\text{KdV}}(\xi) := \frac{3\gamma}{V}\textnormal{sech}^2\left(\sqrt{\frac{V\gamma}{2}}\xi \right).
\end{equation}
Here we note that \eqref{solutionKdV} is the solitary traveling wave solution to the KdV equation
\begin{equation}\tag{KdV}\label{KdV}
\partial_{\bar{t}} v + V v\partial_{\bar{x}} v + \frac{1}{2V}\partial_{\bar{x}}^3 v = 0
\end{equation}
with a speed $\gamma>0$, that is,  $\xi = \bar{x}- \gamma \bar{t}$. In fact, we have that $\xi = \bar{x}- \gamma \bar{t}$ from \eqref{GMF}
and a linear transformation with specific scaling, the so-called Gardner-Morikawa transformation, 
\begin{equation}\tag{GM}\label{GM}
\bar{x} = \veps^{1/2}(x-Vt), \quad \bar{t} = \veps^{3/2}t.
\end{equation}
We refer to \cite{Gar} for more discussion of \eqref{GM}.
%and derivation of the associated \eqref{KdV} by a formal expansion in the absence of pressure $(\sigma=0$), we refer to \cite{Gar}.
%$\xi=x-\gamma t$ for $\gamma >0$ and 
%We note that by a formal expansion with \eqref{GM}, one can derive \eqref{KdV}  from \eqref{EP2}.
% by a formal expansion

%We shall  discuss how  \eqref{EPGM}, called the Gardner-Morikawa \textcolor{red}{transformed} Euler-Poisson system, can be derived,  and give a brief illustration on physicality of the model. To this end, 

\subsection{Physicality of the model}
The motion of ions in plasma is governed by the  Euler-Poisson system
whose one-dimensional model\footnote{What mainly assumed in \eqref{EP1} are that $T_i$ and $T_e$ are constant (isothermal), the time variation of the magnetic field is negligible (electrostatic), and the dynamics of ions and electrons occur only in $X$ direction (plane wave). Additionally, it is assumed that $m_e=0$, where $m_e$ is the mass of the electrons, since $m_e \ll  m_i$ for every plasma environment.}
is given by:
\begin{equation}\label{EP1}
\left\{
\begin{array}{l l}
\partial_T n_i + \partial_X(n_iu_i) = 0, \\ 
\partial_T u_i  + u_i\partial_X u_i + \frac{k_BT_i}{m_i}\frac{\partial_X n_i}{n_i} = -\frac{e}{m_i}\partial_X\Phi, \\
\partial_X^2 \Phi = 4\pi e \left[ n_{e0}\exp\left(\frac{e\Phi}{k_BT_e}\right) - n_i \right],
\end{array} 
\right.
\end{equation}
where $n_i$, $u_i$ and $\Phi$ are unknown functions representing the density, velocity of the ion and the electric potential, respectively, and $m_i$, $T_i$, $k_B$ and $e$ represent the mass, temperature of the ion, the Boltzmann constant and the charge of an electron, respectively. Here we note that the model adopts the so-called Boltzmann relation stating that the distribution of electron density  is given by $n_e = n_{e0}\exp\left(\frac{e\Phi}{k_BT_e}\right)$, where $n_{e0}$ and $T_e$ are the electron density of the constant state and temperature of the electrons,  respectively. 
%For more details of derivation of the Boltzman relation 

Upon an appropriate non-dimensionalization\footnote{$x= \frac{X}{\sqrt{k_BT_e/4\pi n_{e0}e^2}}, \;\; t = T\sqrt{ \frac{4\pi n_{e0}e^2}{m_i}}, \;\; n = \frac{n_i}{n_{e0}}, \;\; u = \frac{u_i}{\sqrt{k_BT_e/m_i}}, \;\; \phi = \frac{e\Phi}{k_BT_e}.$}, one can obtain \eqref{EP2} from \eqref{EP1}, where $\sigma = T_i/T_e\geq 0$  is a constant of the temperature ratio\footnote{In plasma physics, when one considers the fluid equations for plasma, it is often assumed that $\sigma=0$, which is an ideal situation for the  case $T_i \ll T_e$. In the case $T_i \approx T_e$, it is known that the fluid description is not relevant since the Landau damping effect becomes important.}.
%We remark that the Euler-Poisson system \eqref{EP2} is one of the most commonly used fluid models in the physics community, 
We remark that the Euler-Poisson system \eqref{EP1} is a classical model of plasma at the fluid level, that is commonly used to study the ion dynamics, and is known to well describe several  interesting  phenomena of plasma  such as  the formation of plasma sheaths and double layers. We refer readers to \cite{Ch,Dav} for more physicality of \eqref{EP1}.

\subsection{Main results}
Before we discuss our main results, we present some preliminaries. 
%For our analysis, we will work with $\eqref{EPGM}$. 
%\subsection{Prelimina4ry definitions}
In order to find solitary traveling wave solutions, we plug the Ansatz $(n,u,\phi)(\xi)$ into \eqref{EP2}--\eqref{bdCon x}, where $\xi$ is given by \eqref{GMF}.
%\begin{equation}\label{GMF}
%\xi :=\veps^{1/2}\left( \bar{x}-(V+\gamma\veps) \bar{t} \right)
%\end{equation} By letting \eqref{GMF}, 
Then one obtains the traveling wave equations,
%$\xi = \veps^{} x-\gamma t$ for $\gamma>0$, \eqref{EP} becomes
\begin{subequations}\label{TravelEq}
\begin{align}[left = \empheqlbrace\,]
& -(V+\gamma\veps) n' + (nu)'= 0, \label{TravelEq1} \\
& -(V+\gamma\veps)u' + uu' +\sigma \frac{n'}{n}= -\phi',\label{TravelEq2}\\
& \veps\phi'' = e^\phi-n,\label{TravelEq3}
\end{align}
\end{subequations}
where $'$ denotes the derivative in $\xi$, with the far-field condition
\begin{equation}\label{bdCon+-inf}
n \to 1, \;\; u \to 0, \;\; \phi \to 0 \;\; \text{as} \;\; | \xi | \to  \infty.
\end{equation}
%We remark that
Thanks to translation invariance of the equation, $(n,u,\phi)(\xi-\xi_c)$ is also a solution to \eqref{TravelEq}--\eqref{bdCon+-inf} for any $\xi_c\in\mathbb{R}$. %In fact,
We shall see that the solution to \eqref{TravelEq}--\eqref{bdCon+-inf} is unique up to a translation. In light of this, we may assume without loss of generality  that $\xi_c=0$ in what follows. 

 We seek a non-trivial smooth solution $(n,u,\phi)$ to \eqref{TravelEq}--\eqref{bdCon+-inf} satisfying 
 the properties of \emph{solitary waves}.
\begin{defi*}\label{solitary-def}
 $(n,u,\phi)$ is called the \emph{solitary wave} solution to  \eqref{TravelEq}--\eqref{bdCon+-inf}
% --\eqref{bdCon+-inf}
  if the following hold:
 \begin{enumerate}[(i)]
\item (symmetry)
\begin{equation}  
 n(\xi)=n(-\xi),\;\; u(\xi)=u(-\xi), \;\; \phi(\xi)=\phi(-\xi) \;\;  \text{for} \; \xi \in \mathbb{R}, \label{Symmetric} 
\end{equation}
\item (monotonicity)
\begin{equation}\label{mono}
 n'(\xi), \;\; u'(\xi), \;\; \phi'(\xi)<0  \quad  \text{for} \; \xi \in (0,\infty).
\end{equation}
\end{enumerate}
\end{defi*} 
Note that the solutions $(n,u,\phi)$ satisfying \eqref{bdCon+-inf}--\eqref{mono} achieve their unique maximum values $(n_*, u_*, \phi_*)$ at $\xi=0$, that is, 
\begin{equation}\label{max-v}
(n,u,\phi)(0) = (n_*, u_*, \phi_*),
\end{equation} and there hold
\begin{equation}\label{SignOfSols} 
n(\xi)>1, \;\; u(\xi)>0, \;\; \phi(\xi)>0 \quad \text{for} \;\; \xi\in \mathbb{R}. 
\end{equation}

We define some parameters.
For the case $\sigma>0$, one can check by inspection (See Appendix) that the equation
\begin{equation}\label{Aux3 lem-1}
z^{\sigma}\left[\sigma(z - 1)^2 + 1 \right]  =
\exp \left(\frac{\sigma}{2}\left(z^2 - 1 \right) \right)
\end{equation} 
has exactly two roots $z=1$ and $z=: \zeta_\sigma$ on $[1,\infty)$ such that 
$\zeta_{\sigma}>\sqrt{\frac{1+\sigma}{\sigma}}>1$ and
\begin{equation}\label{Aux4-1 lem}
z^{\sigma}\left[\sigma(z - 1)^2 + 1 \right]  >
\exp \left(\frac{\sigma}{2}\left(z^2 - 1 \right) \right) \;\; \text{for}\; z\in(1,\zeta_\sigma).
\end{equation}
Here we note that $\zeta_{\sigma}$  depends only on $\sigma$.
Let $\zeta_0$ be the unique positive root of
\begin{equation}\label{Eq z0 Cold-1}
z^2+1 = \exp(z^2/2).
\end{equation}
It is easy to check that $\zeta_0>1$ and 
\begin{equation}\label{Eq z_0 Cold}
z^2+1 > \exp(z^2/2) \;\; \text{for} \; z\in(0,\zeta_0).
\end{equation}
 
%\begin{assume}\label{assume-J}
Let $(V, \gamma, \veps)$ be the positive numbers satisfying
\begin{subequations}\label{Cond_J}
\begin{align}[left = \empheqlbrace\qquad]
& \sqrt{\frac{1+\sigma}{\sigma}} < \frac{V+\gamma\veps}{\sqrt{\sigma}} < \zeta_{\sigma} & \text{when} \;\; \sigma>0, \label{Cond_JHot} \\
& 1 < V+\gamma\veps < \zeta_0 & \text{when} \;\; \sigma=0. \label{Cond_JCold}
\end{align}
\end{subequations}
% \end{assume}
Now we present our result on existence of  solitary waves. 
\begin{theorem}\label{MainThm} Suppose that $(V, \gamma, \veps)$ satisfies \eqref{Cond_J}.
Then the problem \eqref{TravelEq}--\eqref{bdCon+-inf} admits a unique (up to a shift) non-trivial smooth solution $(n,u,\phi)$. Moreover, upon an appropriate choice of shift, it satisfies \eqref{Symmetric}--\eqref{SignOfSols}. 
\end{theorem}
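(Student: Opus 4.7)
The plan is to reduce the coupled system \eqref{TravelEq}--\eqref{bdCon+-inf} to a single first-order autonomous scalar ODE of Sagdeev-potential form, and then read the solitary wave off the resulting planar phase portrait.

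First, I would exploit the two conservation laws hidden in the fluid block. Integrating \eqref{TravelEq1} once and applying \eqref{bdCon+-inf} yields the algebraic identity $u=(V+\gamma\veps)(n-1)/n$, and substituting into \eqref{TravelEq2} and integrating again yields
$$\phi = F(n) := \frac{(V+\gamma\veps)^2}{2}\left(1-\frac{1}{n^2}\right) - \sigma\log n.$$
Since $F'(1) = (V+\gamma\veps)^2 - \sigma > 0$ by \eqref{Cond_J}, $F$ is locally invertible near $n=1$; write $n=G(\phi)$. Substituting into \eqref{TravelEq3}, multiplying by $\phi'$, integrating once, and applying \eqref{bdCon+-inf} then produces
$$\frac{\veps}{2}(\phi')^2 = -V_{\mathrm{Sag}}(\phi),\qquad V_{\mathrm{Sag}}(\phi):=1-e^\phi+(V+\gamma\veps)^2\!\left(1-\frac{1}{G(\phi)}\right)-\sigma\bigl(G(\phi)-1\bigr).$$
By construction $V_{\mathrm{Sag}}(0)=V_{\mathrm{Sag}}'(0)=0$, and a short chain-rule computation gives $V_{\mathrm{Sag}}''(0) = 1/\bigl((V+\gamma\veps)^2-\sigma\bigr)-1 < 0$ under \eqref{Cond_J}, so $\phi=0$ is a hyperbolic saddle of the associated planar Hamiltonian flow.

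The principal analytic step is to exhibit a second zero $\phi_*>0$ of $V_{\mathrm{Sag}}$ with $V_{\mathrm{Sag}}<0$ on $(0,\phi_*)$. I would re-express the equation $V_{\mathrm{Sag}}(F(n))=0$ in terms of $n$, and after a suitable dimensionless substitution rearrange it into precisely the implicit identity \eqref{Aux3 lem-1} for $\sigma>0$ (resp.\ \eqref{Eq z0 Cold-1} for $\sigma=0$). The upper bounds in \eqref{Cond_JHot}--\eqref{Cond_JCold} then supply a root $n_*\in(1,\infty)$ with $F'(n_*)\neq 0$, equivalently $\phi_*:=F(n_*)>0$, while \eqref{Aux4-1 lem} and \eqref{Eq z_0 Cold} yield the desired strict negativity of $V_{\mathrm{Sag}}$ throughout $(0,\phi_*)$. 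I would also need to verify that $F'(n)>0$ on the entire interval $[1,n_*]$, so that the inverse $G$ and the quadrature below remain well-defined along the whole trajectory.

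With these ingredients the solitary wave is obtained by quadrature: solving $\xi = \pm\sqrt{\veps/2}\int_{\phi(\xi)}^{\phi_*} d\tilde\phi/\sqrt{-V_{\mathrm{Sag}}(\tilde\phi)}$ on the half-lines $\xi\gtrless 0$ produces a smooth, even profile $\phi(\xi)$ with $\phi(0)=\phi_*$, decaying exponentially at $\pm\infty$ by the hyperbolic linearization; the integrand has an integrable $1/\sqrt{\cdot}$ singularity at $\phi_*$ because $V_{\mathrm{Sag}}'(\phi_*)\neq 0$. Setting $n:=G(\phi)$ and $u:=(V+\gamma\veps)(1-1/n)$ recovers the full solution, and \eqref{Symmetric}--\eqref{SignOfSols} are inherited via the monotonicity of $G$ and the explicit formula for $u$. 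Uniqueness up to shift is the standard homoclinic argument: any smooth nontrivial solution of the planar system decaying at both infinities must coincide, up to translation, with the unique homoclinic orbit leaving and returning to the saddle $\phi=0$. I expect the main obstacle to be the Sagdeev-potential analysis in the isothermal case: checking that \eqref{Aux3 lem-1} has exactly one relevant root $\zeta_\sigma$ beyond $\sqrt{(1+\sigma)/\sigma}$ and that $F$ remains strictly increasing on all of $[1,n_*]$ are delicate sign computations, presumably relegated to the appendix advertised in the text.
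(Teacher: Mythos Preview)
Your approach is correct and is essentially the same phase-plane/first-integral argument the paper gives, just carried out in the $(\phi,\phi')$ variables rather than in $(n,E)$ with $E=-\phi'$. The paper makes the opposite choice deliberately: since $\phi=F(n)$ cannot be inverted in closed form when $\sigma>0$, working with $n$ as the primary variable keeps the ODE system explicit, whereas your route forces you to carry the implicit inverse $G$ throughout and to check a posteriori that the trajectory stays in the region $n<n_s=(V+\gamma\veps)/\sqrt{\sigma}$ where $F'>0$. Both lead to the same first integral (your $-V_{\mathrm{Sag}}(F(n))$ is exactly the paper's $g(n)-g(1)$) and the same homoclinic picture.

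One imprecision worth correcting: the equation $V_{\mathrm{Sag}}(F(n))=0$ does \emph{not} rearrange into \eqref{Aux3 lem-1}. The identity \eqref{Aux3 lem-1} is the threshold condition $g(1)=g(n_s)$, i.e.\ the borderline of \eqref{Cond_JHot}; its role is to guarantee, via \eqref{Aux4-1 lem} and the upper bound in \eqref{Cond_JHot}, that $g(1)>g(n_s)$ (equivalently $-V_{\mathrm{Sag}}(F(n_s))<0$), which together with the fact that $g$ increases on $(1,n_{ce})$ and decreases on $(n_{ce},n_s)$ forces an intermediate root $n_*\in(n_{ce},n_s)$. The turning-point equation $g(n_*)=g(1)$ itself is a different transcendental relation. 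This also delivers for free the monotonicity $F'>0$ on $[1,n_*]$ that you flagged as the main obstacle, since $n_*<n_s$.
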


We remark that one can show, by inspection, that \eqref{Cond_J} is also a necessary condition for existence of non-trivial smooth solutions to \eqref{TravelEq}--\eqref{bdCon+-inf}. We omit the details since it is not of our main interest in the present work.
%\begin{corollary}\label{cor}
%Let $V=\sqrt{1+\sigma}$ where $\sigma\geq 0$.  For any fixed $\gamma>0$, there exists $\veps_{\sigma,\gamma}>0$ depending only on $\sigma$ and $\gamma$ such that the problem \eqref{TravelEq}--\eqref{bdCon+-inf} admits a unique (up to a shift) non-trivial smooth solution $(n^\veps,u^\veps,\phi^\veps)$ for all $0<\veps< \veps_{\sigma,\gamma}$. Moreover  it satisfies, 
%upon an appropriate choice of shift, \eqref{Symmetric}--\eqref{SignOfSols}. 
%\end{corollary}

Although Theorem~\ref{MainThm}  
 holds true as long as \eqref{Cond_J} is satisfied, we restrict to the case $V=\sqrt{\sigma +1}$ and manipulate  only $\veps>0$ as a small parameter for fixed $V$ and $\gamma>0$ for Theorem~\ref{MainThm3}.
 % of the small amplitude limit problem.
 %does not require any particular choice of $V$
 % for the existence of solitary waves. 
% With $V=\sqrt{1+(\zeta_0-1)/2}$, for instance, we can obtain the same result as Corollary \ref{cor} since $z_0>1$ for the case of $\sigma=0$. 
% Nevertheless, we will hereafter manipulate  only $\veps$ as a small parameter for fixed $V=\sqrt{1+\sigma}$ and $\gamma>0$. 
%
%
% The reason for the specific choice of $V$ will be illuminated in the course of our analysis.
  
For the solitary wave solution $(n^\veps,u^\veps,\phi^\veps)$ to \eqref{TravelEq}--\eqref{bdCon+-inf} satisfying \eqref{Symmetric}--\eqref{SignOfSols}, we define the remainders as
\begin{equation}\label{Def nR uR phiR}
n_{R}^\veps:= n^\veps - 1 - \veps n_{\text{KdV}},\quad u_{R}^\veps := u^\veps - \veps Vn_{\text{KdV}},\quad \phi_{R}^\veps := \phi^\veps - \veps n_{\text{KdV}}.
\end{equation}
Let $f^{(k)}(z)$ denote the $k$-th order derivative of a function $f(z)$ in $z\in\mathbb{R}$.
Now we state our main theorem of the asymptotic behavior in the small amplitude limit. 
\begin{theorem}\label{MainThm3}
Let  $V=\sqrt{1+\sigma}$,
 $\sigma \geq 0$ and $\gamma>0$ be fixed. Let $k$ be any non-negative integer. Then there exist positive constants $\veps_1$, $\alpha$ (independent of $k$), and $C_{k}>0$ such that 
%\begin{subequations}\label{pointestimate}
%\begin{align}[left = \empheqlbrace\,]
%& \sup_{\xi\in\mathbb{R}} |n^\veps(\xi)-1-\veps n_{\text{KdV}}(\xi)| \leq C_1\veps^2, \label{PointEst n-1} \\
%& \sup_{\xi\in\mathbb{R}} |u^\veps(\xi)-\veps Vn_{\text{KdV}}(\xi)| \leq C_1\veps^2, \label{PointEst u} \\
%& \sup_{\xi\in\mathbb{R}} |\phi^\veps(\xi)-\veps n_{\text{KdV}}(\xi)| \leq C_1\veps^2. \label{PointEst phi}
%\end{align}
%\end{subequations}
\begin{equation}\label{pointestimate}
  \sup_{\xi\in \mathbb{R}} \left|e^{\alpha|\xi|/2}   ({n_R^\veps}^{(k)}, {u_R^\veps}^{(k)} , {\phi_R^\veps}^{(k)}) (\xi) \right| \leq C_{k}\veps^2 
\end{equation}
for all $0<\veps<\veps_1$. Here $C_{k}$ and $\alpha$ are independent of $\veps$.
\end{theorem}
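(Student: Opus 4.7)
My plan is to reduce the traveling-wave system to a single scalar ODE for $\phi^\veps$ using conservation laws, rescale to the natural KdV amplitude, and then estimate the remainder by inverting the linearization about the KdV soliton on a weighted $L^\infty$ space. Integrating \eqref{TravelEq1} and \eqref{TravelEq2} under \eqref{bdCon+-inf} yields the first integrals $u^\veps = (V+\gamma\veps)(1 - 1/n^\veps)$ and $\tfrac{1}{2}(V+\gamma\veps)^2(1 - 1/(n^\veps)^2) = \sigma\ln n^\veps + \phi^\veps$; since the Jacobian at $\phi=0$ equals $V^2-\sigma = 1$, the implicit function theorem gives smooth inverses $n^\veps = N(\phi^\veps;\veps)$ and $u^\veps = U(\phi^\veps;\veps)$, collapsing the system to the Sagdeev ODE
\begin{equation*}
\veps (\phi^\veps)'' = e^{\phi^\veps} - N(\phi^\veps;\veps),
\end{equation*}
which admits the energy-type first integral $\tfrac{\veps}{2}((\phi^\veps)')^2 = S(\phi^\veps;\veps)$ with $S(\phi;\veps) := e^\phi - 1 - \int_0^\phi N(s;\veps)\,ds$. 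Setting $\phi^\veps = \veps \psi^\veps$ and Taylor-expanding (the identity $V^2 = 1+\sigma$ cancels the otherwise problematic linear-in-$\phi$ contribution), this becomes
\begin{equation*}
(\psi^\veps)'' = 2V\gamma\,\psi^\veps - V^2 (\psi^\veps)^2 + \veps\,\mc{R}(\psi^\veps;\veps),
\end{equation*}
with $\mc R$ smooth; the $\veps=0$ equation is exactly the traveling-wave equation satisfied by $n_{\text{KdV}}$.

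Subtracting and setting $\eta^\veps := \psi^\veps - n_{\text{KdV}}$ gives
\begin{equation*}
L\eta^\veps = -V^2(\eta^\veps)^2 + \veps\,\mc R(n_{\text{KdV}}+\eta^\veps;\veps),\qquad L := \partial_\xi^2 - 2V\gamma + 2V^2 n_{\text{KdV}},
\end{equation*}
the linearization of the KdV profile equation at $n_{\text{KdV}}$. After the rescaling $y=\sqrt{V\gamma/2}\,\xi$, the operator $L$ becomes a rescaled P\"oschl--Teller Schr\"odinger operator whose only zero eigenmode on $\mathbb R$ is $n_{\text{KdV}}'$, which is odd. Choosing $\alpha \in (0, 2\sqrt{2V\gamma})$ -- strictly below the spatial decay rate of $n_{\text{KdV}}$ -- I work in the weighted space
\begin{equation*}
X := \{u \in C(\mathbb{R}) : u\ \text{even},\ \|u\|_X := \sup_{\xi\in\mathbb R} e^{\alpha|\xi|/2}|u(\xi)| < \infty\},
\end{equation*}
on which a Wronskian-based Green's function built from the two bounded solutions of $Lu=0$ yields a bounded inverse $L^{-1}:X\to X$ with operator norm independent of $\veps$. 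The identity $S(\phi_*^\veps;\veps)=0$ for the maximum $\phi_*^\veps := \phi^\veps(0)$ forces $\phi_*^\veps = O(\veps)$, so $\|\psi^\veps\|_{L^\infty}\leq C$ uniformly, and a Banach fixed-point argument applied to
\begin{equation*}
\eta = L^{-1}\left[-V^2 \eta^2 + \veps\,\mc R(n_{\text{KdV}}+\eta;\veps)\right]
\end{equation*}
on the ball $\{\|\eta\|_X \leq M\veps\}$ yields, for $\veps$ small, a unique fixed point $\eta^\veps$ with $\|\eta^\veps\|_X \leq C\veps$; by the uniqueness in Theorem~\ref{MainThm} it coincides with $\psi^\veps - n_{\text{KdV}}$, giving $\|\phi_R^\veps\|_X = \veps\|\eta^\veps\|_X \leq C\veps^2$.

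The bounds on $n_R^\veps$ and $u_R^\veps$ follow from the expansions $N(\phi;\veps) = 1 + \phi + O(\phi^2 + \veps\phi)$ and $U(\phi;\veps) = V\phi + O(\phi^2 + \veps\phi)$ (read off from the Bernoulli relation) applied to $\phi^\veps = \veps n_{\text{KdV}} + \phi_R^\veps$. The higher-derivative estimates in \eqref{pointestimate} for $k\geq 1$ come by differentiating the scalar ODE and the algebraic relations $k$ times: each differentiation produces only factors of $\veps$, derivatives of $n_{\text{KdV}}$ (which decay at the same exponential rate and hence remain in $X$), or products of already-small remainders, so the $O(\veps^2)$ bound propagates by induction on $k$ with $\alpha$ unchanged and a new $k$-dependent constant $C_k$.

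The main obstacle is the uniform-in-$\veps$ invertibility of $L$ on the weighted space of even functions: this requires both the spectral observation that the zero eigenmode of $L$ lies in the odd subspace (so that $L$ restricted to even functions has trivial kernel) and a careful variation-of-parameters construction of the Green's function that suppresses the growing mode at $\pm\infty$ and yields an operator bound compatible with the weight $e^{\alpha|\xi|/2}$. A secondary delicate point is the cancellation coming from $V^2 = 1+\sigma$ -- without it the residual in the rescaled $\psi^\veps$ equation would fail to be $O(\veps)$ and the whole KdV asymptotics would break down.
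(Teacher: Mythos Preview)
Your proposal is correct and takes a genuinely different route from the paper. The paper proceeds by direct ODE estimates: it first proves the sharp peak estimate $|\phi_*^\veps - 3\gamma V^{-1}\veps|\le C\veps^2$ (Proposition~\ref{cor2}), then establishes a uniform-in-$\veps$ exponential decay of $\veps^{-1}(n^\veps-1,u^\veps,\phi^\veps)$ via a phase-plane argument (Lemmas~\ref{LemmaAux1}--\ref{LemmaAux3} and Proposition~\ref{Prop1}); with these in hand it derives the remainder equation \eqref{MainEq} for $\phi_R^\veps$, handles the region near $\xi=0$ by Gronwall (Proposition~\ref{Prop2}) and the far field by exploiting the sign $F_\veps>V\gamma$ for $\xi\ge\xi_1$ in a weighted $L^2$ estimate (Proposition~\ref{Prop3}), and finally bootstraps to higher derivatives. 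Your approach instead linearizes the rescaled scalar Sagdeev ODE about $n_{\text{KdV}}$, inverts the resulting P\"oschl--Teller operator $L=\partial_\xi^2-2V\gamma+2V^2 n_{\text{KdV}}$ on the weighted space of even functions (using that the kernel $n_{\text{KdV}}'$ is odd), and closes by a contraction in a ball of radius $O(\veps)$; the constructed fixed point is then identified with the actual solitary wave via the uniqueness in Theorem~\ref{MainThm}, so no a~priori smallness of $\psi^\veps-n_{\text{KdV}}$ in the weighted norm is needed. The paper's method is more self-contained---it never invokes spectral properties of $L$---but pays for this with the rather delicate uniform lower bounds on the trajectory speed and the two-region splitting. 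Your method is conceptually cleaner and immediately yields the weighted pointwise bound, but concentrates all the work in the construction of a bounded Green's function for $L$ on $X$; once that is done (your sketch via Wronskian and even/odd decomposition is the right idea, with the constraint $\alpha/2<\sqrt{2V\gamma}$), the rest is routine. Note that your sentence deducing $\|\psi^\veps\|_{L^\infty}\le C$ from $S(\phi_*^\veps;\veps)=0$ is not actually needed in your argument as written, since the identification with $\psi^\veps$ goes through Theorem~\ref{MainThm} rather than through an a~priori bound.
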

%\begin{theorem}\label{MainThm3}
%Let  $V=\sqrt{1+\sigma}$,
% $\sigma \geq 0$ and $\gamma>0$ be fixed. Let $k$ be any nonnegative integer. Then there holds 
%\begin{equation}\label{pointestimate}
%  \sup_{\xi\in \mathbb{R}} \left|e^{\alpha|\xi|/2}   ({n_R^\veps}^{(k)}, {u_R^\veps}^{(k)} , {\phi_R^\veps}^{(k)}) (\xi) \right| \leq C_{1,k}\veps^2 \ \ \text{ for all }\veps\in(0,\veps_1)
%\end{equation}
%for some $\veps_1>0$, $\alpha>0$ and $C_{1,k}>0$. Here $C_{1,k}$ and $\alpha$ are independent of $\veps$.
%\end{theorem}
%We remark that one can obtain, by a repetitive argument, a similar estimate for higher order derivatives, i.e., for $k\in \mathbb{N}$, 
%\begin{equation}\label{pointestimate-high}
%  \sup_{\xi\in \mathbb{R}} \left|e^{\alpha|\xi|/2} \frac{d^k}{d \xi^k} (n_R^\veps, u_R^\veps , \phi_R^\veps) (\xi) \right| \leq C_{1,k}\veps^2. 
%\end{equation}

To prove Theorem \ref{MainThm}, 
%we first reduce the problem of \eqref{TravelEq}-\eqref{bdCon+-inf} to an equivalent ODE system for $(n,E)$, where   $E:=-\phi'$. 
we first derive a first-order ODE system for $(n,E)$ which is equivalent to \eqref{TravelEq}-\eqref{bdCon+-inf}, where   $E:=-\phi'$. Then we carry out a phase plane analysis in a similar fashion as  \cite{Cor}.
%of Cordier, Degond, Markowich and Schmeiser, 
%\st{where existence of traveling waves and the quasineutral limit problem  of the two-species Euler-Poisson system are studied.}
%While we will employ a similar technique, our result includes the case $\sigma =0$, which is not covered in \cite{Cor}. 
For the proof of Theorem \ref{MainThm3},
% we begin with an observation that $\phi_R^\veps$ satisfies the equation:
we first derive the equation for $\phi_R^\veps$:
\begin{equation}\label{IntroMainEq}
\left\{
\begin{array}{l l}
{\phi_{R}^\veps}'' - F_{\veps}\phi_R^\veps = \mathcal{M}_3^\veps,  \\ 
F_\veps(\xi) =  2V\gamma  - 2V^2 n_{\text{KdV}} - V^2\frac{\phi_R^\veps}{\veps},
\end{array} 
\right.
\end{equation}
where $\mathcal{M}_3^\veps$ is a function of $n_{\text{KdV}}, n^\veps_{R}, u^\veps_{R}$ and $\phi^\veps_{R}$. The choice of $V=\sqrt{1+\sigma}$ and the fact that $n_{\text{KdV}}$ satisfies \eqref{KdV} are crucially used to derive \eqref{IntroMainEq}.  One of the main difficulties for the analysis of the remainder equation \eqref{IntroMainEq} comes from the fact that $F_\veps(\xi)$ has different signs around $\xi=0$ and $\xi=\infty$. 
%is that $F_\veps(\xi)$ does not have a right sign near  $\xi=0$.
% that the underlined part of \eqref{IntroMainEq} has different signs around $\xi=0$ and $\xi=\infty$. 
 % We note that for each fixed $\veps$, $F_\veps(\xi)$ increases in $\xi>0$ since  $n_{\text{KdV}}$ and $n^\veps$ decrease in $\xi>0$. 
% Hence one may want to have that  $\lim_{\veps\to0}F_\veps(0)>0$ so that $F_\veps(\xi)$ is uniformly bounded from below by some positive constant for all $\xi\in[0,\infty)$. If this is the case,  
% Unfortunately this is not the case. Thus we are led to investigate further  the properties of the system near $\xi=0$.
%  By a careful observation on the peak values of the solitary wave solution $n^\veps$ as parameter $\veps$ varies, we  obtain the \emph{sharp} estimate for $(n^\veps,u^\veps,\phi^\veps)(0)$ as follows. 
 By a careful analysis, we  obtain a  \emph{sharp} estimate for  the peak values of the solitary wave solution as follows.  For our notational convenience, we set the peak values by $(n_*^\veps, u_*^\veps, \phi_*^\veps) := (n^\veps,u^\veps,\phi^\veps)(0)$.  
\begin{proposition}\label{cor2}
Let $V=\sqrt{1+\sigma}$,  $\sigma \geq 0$ and $\gamma>0$ be fixed.
%For any fixed $\gamma>0$, 
%and $(n^\veps,u^\veps,\phi^\veps)$ be a solution to \eqref{TravelEq}--\eqref{bdCon+-inf}. 
%and  let  $n_\ast^\veps, u_\ast^\veps$, and $\phi_\ast^\veps$ be the peak values of $n^\veps$, $u^\veps$, and $\phi^\veps$, respectively. 
Then there exist positive constants $\veps_0$ and $C$ such that for all $0 < \veps< \veps_0$, 
\begin{equation}\label{unibd_nuphi}
\left| n_\ast^\veps - 1 - 3\gamma V^{-1}\veps \right| + \left|u^\veps_\ast - 3\gamma\veps \right| + \left| \phi^\veps_\ast - 3\gamma V^{-1}\veps \right| \leq \veps^2C,
\end{equation}
where $C$ is independent of $\veps$.
Moreover, $V= \sqrt{1+\sigma}$ is necessary for $\lim_{\veps\to 0}n_\ast^\veps = 1$.
\end{proposition}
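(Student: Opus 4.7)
The plan is to reduce the peak-value problem to a single scalar algebraic equation in $n_\ast^\veps$ that admits an explicit Taylor analysis at $(n,\veps)=(1,0)$. First I integrate the first two traveling-wave equations \eqref{TravelEq1}--\eqref{TravelEq2} using the far-field condition \eqref{bdCon+-inf}: continuity yields the mass-flux identity $u=(V+\gamma\veps)(1-1/n)$, and the momentum equation integrates to the Bernoulli-type relation $\phi=(V+\gamma\veps)u - u^2/2-\sigma \ln n$. Eliminating $u$ produces the explicit algebraic closure $\phi = \Psi_\veps(n) := \tfrac{1}{2}(V+\gamma\veps)^2(1-n^{-2}) - \sigma\ln n$. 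Viewing $n$ as an algebraic function of $\phi$ through $\Psi_\veps^{-1}$, the Poisson equation \eqref{TravelEq3} becomes the Newton-type ODE $\veps \phi'' = e^\phi - n(\phi)$; its first integral (the Sagdeev pseudo-energy), the symmetry-induced vanishing $\phi'(0)=0$, and a change of variable $\phi\mapsto n$ yield the closed relation
\begin{equation*}
e^{\phi_\ast^\veps}-1 = (V+\gamma\veps)^2 \bigl(1-1/n_\ast^\veps\bigr) - \sigma(n_\ast^\veps - 1).
\end{equation*}
Combined with $\phi_\ast^\veps = \Psi_\veps(n_\ast^\veps)$, this gives a single scalar equation $\Phi(n_\ast^\veps,\veps)=0$ for the peak density.

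Next I Taylor-expand $\Phi(1+\tau,\veps)$ in $\tau := z-1$ and $a := (V+\gamma\veps)^2-V^2 = 2V\gamma\veps + \gamma^2\veps^2$. The choice $V^2=1+\sigma$ produces exact cancellations: the coefficients of $\tau$, of $\tau^2$ in $\Phi(\cdot,0)$, and of $a\tau$ all vanish identically, and the leading non-trivial terms read
\begin{equation*}
\Phi(1+\tau,\veps) = \tfrac{a}{2}\tau^2 - \tfrac{V^2}{3}\tau^3 + O(\tau^4+a\tau^3).
\end{equation*}
The factor $\tau^2$ accounts for the trivial constant solution $n\equiv 1$; dividing it out and applying the implicit function theorem to the reduced equation, whose $\tau$-derivative at $(0,0)$ is $-V^2/3\neq 0$, isolates the unique small non-trivial branch $n_\ast^\veps - 1 = \tfrac{3a}{2V^2} + O(\veps^2) = \tfrac{3\gamma}{V}\veps + O(\veps^2)$. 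Substituting this back into $u=(V+\gamma\veps)(1-1/n)$ and $\Psi_\veps$ delivers the $O(\veps^2)$ bounds for $u_\ast^\veps$ and $\phi_\ast^\veps$, establishing \eqref{unibd_nuphi}.

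For the necessity claim, when $V^2\neq 1+\sigma$ the quadratic coefficient of $\Phi(1+\tau,0)$ evaluates to $\tfrac{1}{2}(V^2-\sigma)(V^2-\sigma-1)\neq 0$ throughout the supersonic window \eqref{Cond_J}; dividing the relation $\Phi(1+\tau_\veps,\veps)=0$ by $\tau_\veps^2$ and letting $\veps\to 0$ would then force this coefficient to vanish, a contradiction. Hence any non-trivial branch $n_\ast^\veps$ stays bounded away from $1$, and $n_\ast^\veps \to 1$ compels $V=\sqrt{1+\sigma}$. The main obstacle throughout is the double degeneracy of $\Phi$ at $(1,0)$---both $\partial_z\Phi$ and $\partial_z^2\Phi$ vanish there, ruling out the classical implicit function theorem; the remedy is precisely the factorization by $\tau^2$, which simultaneously explains the scaling $n_\ast^\veps-1\sim\veps$ and the sharpness of the $O(\veps^2)$ remainder.
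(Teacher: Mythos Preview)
Your approach is essentially the paper's: both reduce to the same scalar relation for $n_\ast^\veps$ (your $\Phi(n_\ast^\veps,\veps)=0$ is exactly the paper's first-integral condition $g(n_\ast^\veps)=g(1)$, evaluated at the peak where $E=-\phi'=0$), and both extract the asymptotics by Taylor-expanding this relation about $n=1$ and using the cancellations forced by $V^2=1+\sigma$. Your coefficients $a/2$ and $-V^2/3$ coincide with the paper's $g^{(2)}(1)/2$ and $g^{(3)}(1)/6$; the paper simply carries out the resulting algebra by hand (Steps~2--3) rather than packaging it as an implicit-function-theorem statement, and then transfers the bound to $u_\ast^\veps,\phi_\ast^\veps$ via \eqref{TravelEqB} exactly as you do.

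One step, however, is glossed over in your sketch and needs to be made explicit. The implicit function theorem furnishes a \emph{local} branch $\tau(\veps)\to 0$ of the reduced equation, but you must still identify this branch with the actual peak $n_\ast^\veps$ coming from the phase-plane construction in Theorem~\ref{MainThm}; a priori $n_\ast^\veps-1$ need not be small. The paper handles this by first proving $n_\ast^\veps\to 1$ directly (its Step~1, via the limiting function $g_0$) before any Taylor analysis. In your framework the cleanest fix is to invoke the uniqueness already established in Section~2: $g(\cdot)=g(1)$ has precisely one root other than $n=1$ in $(1,n_s^\veps)$, namely $n_\ast^\veps$; since for small $\veps$ your IFT branch $1+\tau(\veps)$ is a nontrivial root lying in that interval, it must equal $n_\ast^\veps$. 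Without this identification the argument is incomplete. Your necessity argument via the nonvanishing $\tau^2$-coefficient $\tfrac12(V^2-\sigma)(V^2-\sigma-1)$ is correct; the paper reaches the same conclusion by the simpler observation $n_\ast^\veps>n_c^\veps=(V+\gamma\veps)/\sqrt{1+\sigma}$.
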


 Thanks to Proposition~\ref{cor2}, we see 
% that $F_\veps(\xi)$ has different signs near $\xi=0$ and $\xi=\infty$. Indeed,  
that  $\lim_{\veps \to 0}F_\veps(0)=-4V\gamma<0$ by \eqref{unibd_nuphi} while $\lim_{\xi \to \infty}F_\veps(\xi)=2V\gamma$ for all $\veps$ by \eqref{bdCon+-inf}. Hence we  split the interval $[0,\infty)$ into two regions and carry out our analysis separately. 

%Although Proposition \ref{cor2} poses another challenge, its implications are significant. 
%First of all, it states that the choice of $V=\sqrt{1+\sigma}$ is necessary for the uniform boundedness of
%\eqref{unibd_nuphi} implies the uniform boundedness of $\veps^{-1}\|n^\veps-1\|_{L^\infty(\mathbb{R})}$. 
On the other hand, we note that the coefficient $3\gamma/V$ of $\veps$ in \eqref{unibd_nuphi} is exactly the peak value of $n_{\text{KdV}}(\xi)$ defined in \eqref{solutionKdV}. That is, the order of $(n_R^\veps,u_R^\veps,\phi_R^\veps)(\xi)$ is at least $O(\veps^2)$  at $\xi=0$. This is crucially used in the proof of our main result. By Gronwall's inequality together with \eqref{unibd_nuphi}, one can obtain \textit{the local uniform estimate} for $\veps^{-2}\phi_R^\veps$ around $\xi=0$ (Proposition~\ref{Prop2}).

Now it remains to obtain \textit{the local uniform decay estimate} for $\veps^{-2}\phi_R^\veps$ around $\xi=\infty$,
for which  a careful analysis is required. It is not clear that if one can choose a uniform constant $\xi_1>0$ so that $F_\veps(\xi)$ is bounded from below by a positive constant on $[\xi_1,\infty)$ for all $\veps$; the time $\xi_\veps>0$ at which $F_\veps(\xi_\veps)=0$ is realized may tend to $\infty$ as $\veps \to 0$. Verifying existence of such $\xi_1$ is a key step toward the uniform estimate for $\veps^{-2}\phi_R^\veps$.
%another issue to overcome. 
To this end, we first estimate the uniform lower bounds for the speed of trajectory curves $\veps^{-1}(n^\veps,E^\veps)(\xi)$ (Lemma~\ref{LemmaAux1}--\ref{LemmaAux3}). This is the place where the sharp estimate \eqref{unibd_nuphi} is crucially used again. Then we obtain the uniform decay estimate for $\veps^{-1}(n^\veps-1,u^\veps,\phi^\veps)(\xi)$ (Proposition~\ref{Prop1}), which yields \textit{the local uniform decay estimate} for $\veps^{-2}\phi_R^\veps$ around $\xi=\infty$ (Proposition~\ref{Prop3}). Combining Proposition \ref{Prop2}--\ref{Prop3} and using the fact that $\phi_R^\veps$ is symmetric about $\xi=0$, we obtain \textit{the uniform decay estimate} for $\veps^{-2}\phi_R^\veps$ on $\mathbb{R}$. 
%This procedure makes the proof of our main theorem interesting in that we deduce the global behavior of the remainder term from the information on its behavior at a point even if the sign of $F_\veps(\xi)$ is changed over $\xi\in\mathbb{R}$. 
The estimates for $\veps^{-2}n_R^\veps$ and $\veps^{-2}u_R^\veps$  immediately follow from \eqref{Eq for difference}.

% Thanks to Proposition~\ref{cor2}, one can easily see 
% that $F_\veps(\xi)$ has different signs near $\xi=0$ and $\xi=\infty$. Indeed,  
%that  $\lim_{\veps \to 0}F_\veps(0)=-4V\gamma<0$ by \eqref{unibd_nuphi} while $\lim_{\xi \to \infty}F_\veps(\xi)=2V\gamma$ for all $\veps$ by \eqref{bdCon+-inf}. Hence we  split the interval $[0,\infty)$ into two regions and carry out our analysis separately. 

%Although Proposition \ref{cor2} poses another challenge, its implications are significant. 
%First of all, it states that the choice of $V=\sqrt{1+\sigma}$ is necessary for the uniform boundedness of
%\eqref{unibd_nuphi} implies the uniform boundedness of $\veps^{-1}\|n^\veps-1\|_{L^\infty(\mathbb{R})}$. 
%On the other hand, we note that the coefficient $3\gamma/V$ of $\veps$ in \eqref{unibd_nuphi} is exactly the peak value of $n_{\text{KdV}}(\xi)$ defined in \eqref{solutionKdV}. That is, the order of $(n_R^\veps,u_R^\veps,\phi_R^\veps)(\xi)$ is $O(\veps^2)$ at least at $\xi=0$. This is crucially used in the proof of our main result. Combinining \eqref{unibd_nuphi} and the Gronwall inequality, one can obtain \textit{the local uniform estimates} for $\phi_R^\veps$ around $\xi=0$ (Proposition~\ref{Prop2}).

\subsection{Motivation and related results}
%The KdV equation is one of the most celebrated partial differential equation originally derived to describe the motion of water waves. Since it was discovered in the early 1960s that the KdV equation can be also derived in different contexts such as the study of hydromagnetic waves in plasma \cite{Gar}, it has been used to study the soliton phenomena arising in various physical situations, and not surprisingly, it has received a bulk of attention by the mathematical community.

Among others, the propagation of solitary waves is one of the most interesting phenomena in the dynamics of electrostatic plasma. Motivated by the discovery that the KdV equation, originally derived to describe the motion of water waves, can be also derived in different contexts such as the study of hydromagnetic waves in plasma \cite{Gar}, plasma physicists have found formal connections \cite{Sag,Wa} between the pressureless Euler-Poisson system and the KdV equation. Later on, the phenomenon was experimentally observed in \cite{ikezi}.
%Seeking a way to explain the phenomena of solitary waves in plasma, plasma physicists have found a formal connection between the phenomena and the solitary waves of the KdV equation, for instance, see \cite{Wa} and \cite{Sag}. 
 %theoretical and experimental 
%Several findings in various settings, including our mathematical result, suggest that \eqref{GMF} with \eqref{V} is an appropriate frame to analyze small amplitude solitary waves to \eqref{EP2}. 
We shall briefly illustrate how the equations \eqref{EP2} and \eqref{KdV} are related, and discuss some relevant mathematical results.

%We shall  discuss how \eqref{EPGM}, called the Gardner-Morikawa \textcolor{red}{transformed} Euler-Poisson system, can be derived, and give a brief illustration on physicality of the model. 

%In light of this, the plasma physicists have been seeking the connection between the phenomena of solitary waves in plasma and  that of KdV equation. 
%It is widely known that the KdV equation is a simple model among a number of water wave models well describing the phenomena of solitary waves. admit the solitary waves and 
%The solitary waves for the KdV equation has received a bulk of attention in the mathematical community of water waves. 
%The plasma physicists has made effort to connect the phenomena of solitary waves arising in plasma to that of well known KdV equation. 

%Observing solitary waves in plasma, 
%Although, when considering \eqref{GM}, no specific choice of $V$ is required, 
By introducing \eqref{GM}, \eqref{EP2} becomes
\begin{equation}\tag{GMEP}\label{EPGM}
\left\{
\begin{array}{l l}
\veps\partial_{\bar{t}} n -V\partial_{\bar{x}}n +  \partial_{\bar{x}}(nu) = 0, \\ 
\veps\partial_{\bar{t}} u  - V\partial_{\bar{x}} u + u\partial_{\bar{x}} u + \sigma\frac{\partial_{\bar{x}} n}{n} = -\partial_{\bar{x}}\phi, \\
\veps\partial_{\bar{x}}^2\phi = e^\phi - n.
\end{array} 
\right.
\end{equation}
%where $\sigma \geq 0$ is a constant of temperature, and $\veps>0$ and $V>0$ are the parameters of amplitude of waves and speed of moving frame, respectively.
It is found at the formal level in \cite{Wa} for the case of $\sigma=0$ and $V=1$,  that if we assume that the solution $(n,u,\phi)$ to \eqref{EPGM} is given by a formal expansion 
\begin{equation}\label{FormalExpan}
n = 1 + \sum_{k=1}^\infty \veps^k n_{(k)}, \quad  
u = \sum_{k=1}^\infty \veps^k u_{(k)}, \quad 
\phi = \sum_{k=1}^\infty \veps^k \phi_{(k)},
\end{equation}
then each component of $(n_{(1)},u_{(1)},\phi_{(1)})$ satisfies \eqref{KdV}. A mathematical validity of this result has not been proved until the work of \cite{Guo}, in which    
 %the formal computation of \cite{Wa} is justified. More specifically,
  it is proved that the solutions to \eqref{EPGM} with the \emph{prepared} initial data converge as $\veps \to 0$ to that of \eqref{KdV} on any fixed time interval.
  % with the corresponding smooth initial data. 
  This result deals also with the case $\sigma > 0$ with the choice of $V=\sqrt{1+\sigma}$. In this case, each component of $(n_{(1)}, V^{-1} u_{(1)} ,\phi_{(1)})$ satisfies \eqref{KdV}.
  
 %On the other hand, 
%  by a different approach, 
On the other hand, a \emph{formal} approximation of the solitary wave solution to \eqref{EP2} with $\sigma=0$ in terms of the hyperbolic secant function are obtained in \cite{Sag}. Suppose that $(n,u,\phi)(\bar{\xi})$, where $\bar{\xi} = x - Mt$ for a constant $M>0$, is a solitary wave solution to \eqref{EP2}. Imposing $n-1, u, \phi \to 0$ as $\xi \to -\infty$, 
%\begin{subequations}\label{Sag2}
%\begin{align}[left = \empheqlbrace\,]
%& \left( M-u \right) n = M, \\ 
%& (M-u)^2 +2\sigma \ln n= M^2-2\phi, \\
%& \phi'' = e^\phi - n.
%\end{align}
%\end{subequations}
%If $n>0$, then $M- u>0$ since $M>0$. Hence, 
\eqref{EP2} is reduced to the equation
\begin{equation}\label{Sag3}
\phi'' = e^\phi - \frac{M}{\sqrt{M^2-2\phi}}.
\end{equation}
%Multiplying \eqref{Sag3} by $\phi'$ and imposing $\phi' \to 0$ as $\xi \to -\infty$, we have
%\begin{equation}\label{Sag4}
%\frac{1}{2}(\phi')^2 = e^\phi + M\sqrt{M^2-2\phi} - (1+M^2) := U(\phi),
%\end{equation}
%where $-U(\phi)$ is called \emph{the Sagdeev potential}.
%From \eqref{Sag4}, we see that $U(\phi)$ must be positive at least for small $0<\phi <\frac{M^2}{2}$. By expanding $U(\phi)$ around $\phi=0$, we find
%\[U(\phi) = \left(  \frac{1}{2}-\frac{1}{2M^2} \right)\phi^2 + O(\phi^3),\]
%and thus, we must have $M^2>1$. Since we assumed that $\phi$ is a solitary wave, there is some $\xi_\ast\in\mathbb{R}$ such that $\phi'(\xi_\ast)=0$. Hence, we must have $U(\phi)=0$ for some $0<\phi<\frac{M^2}{2}$, and this happens only if
%\[\exp\left(\frac{M^2}{2}\right) - (1+M^2) = U\left(\frac{M^2}{2} \right) <0.\] 
It is shown that $1<M<\zeta_0$ is a necessary (\cite{Sag}) and sufficient (\cite{Satt}) condition for existence of the solitary wave solution to \eqref{Sag3}, where $\zeta_0$ is a unique positive root of \eqref{Eq z0 Cold-1}. Let $M=1+\gamma\veps$ for $0<\gamma\veps \ll 1$ and assume $0<\phi\ll 1$. Expanding the right-hand side (RHS) of \eqref{Sag3} for $\gamma\veps$ and $\phi$, one can  obtain
\[
\phi''-2\gamma\veps\phi+\phi^2=O\left(|\phi|(|\gamma\veps|^2 + |\phi|^2) \right).
\]
%\[
%(\phi')^2 - \frac{2}{3}\phi^2(3\gamma\veps-\phi) = O\left(|\phi|^2( |\gamma\veps|^2 + |\phi|^2)\right).
%\]
Neglecting the higher order terms, one can formally obtain 
\begin{equation}\label{Sag6}
\phi \approx 3\gamma\veps \, \text{sech}^2\left(\sqrt{2^{-1}\gamma\veps}\,\bar{\xi}\right)=3\gamma\veps \, \text{sech}^2\left(\sqrt{2^{-1}\gamma\veps}\,[x-(1+\gamma\veps)t]\right).
\end{equation}
By introducing \eqref{GMF}, we observe that the right-hand side of \eqref{Sag6} is the same as $\veps n_{\text{KdV}}$ when $\sigma=0$, our rigorous approximation of $\phi$. 
%Here we can rewrite the argument of \eqref{Sag6} as
%\begin{equation}\label{Sag7}
%(\sqrt{2})^{-1}\left[(\gamma\veps)^{1/2}(\bar{x}-\bar{t}) - (\gamma\veps)^{3/2}\bar{t} \right],
%\end{equation}
%and this suggests that \eqref{GM} with \eqref{V} is a suitable transformation for \eqref{EP2} to capture solitary waves with the amplitude of order $O(\gamma\veps)$. By combining \eqref{GM}, \eqref{Sag6}, and \eqref{Sag7}, %we obtain
%\begin{equation}\label{Sag8}
%\phi = 3\gamma\veps \cdot \text{sech}^2\left(\sqrt{2^{-1}\gamma}(x-\gamma t)\right),
%\end{equation}
%and \eqref{Sag8} is exactly the same as $\veps n_{\text{KdV}}$, our rigorous approximation for $\phi$.
 % Interestingly, as we will see, this result also indicates that \eqref{GM} with the choice \eqref{V} is a suitable transformation for finding the solitary waves for \eqref{EP2}. 
%  The choice of \eqref{V} naturally arises in order to have the solitary waves
%  These two results, however, are obtained \emph{at the formal level}, and deal only with the cold ion case, i.e.,  $\sigma=0$, and, 
%\subsection{Purpose of the Paper, Goal? Limitation? }
%The work of \cite{Sche} for the pressureless model, i.e., $\sigma=0$, uses the formal approximation of \cite{Sag} in their analysis of the linear stability and instability of the solitary wave solution to \eqref{EP2}. However, we are not aware of any literature proving validity of the approximation in a rigorous manner.

We remark that the work of \cite{Guo} well describes the asymptotic behavior ($\veps\to0$) of the small solutions to \eqref{EP2} up to the time scale of $t=O(\veps^{-3/2})$. 
%However, this does not give a satisfactory answer, from mathematical point of view, on the propagation of solitary waves in plasma. 
However, this setting is not yet appropriate to illuminate  the phenomena of solitary waves in plasma.
%However, this result does not yet give a satisfactory illustration on the phenomena of solitary waves in plasma.
%Concerning this,
Also, the formal approximation \eqref{Sag6} of \cite{Sag} has not yet been completely justified from a mathematical point of view. On the other hand, the work of \cite{Sche} for the pressureless model %(that is, $\sigma=0$)
 uses the formal approximation \eqref{Sag6} in their analysis of the linear stability of the solitary wave solution to \eqref{EP2}. However, we are not aware of any literature proving validity of the approximation in a rigorous manner.
%To complete this, 

%existence and the asymptotic behavior in the small amplitude limit   of solitary waves are required to 
%is still far from being well understood since the result 
%%on the behavior of the solution to \eqref{EPGM}
% is valid on the time interval $[0,\tau]$ \textit{for any fixed time} $\tau$.
%
%On the grounds of the above discussion, 
%our main purpose of the present paper
%Keeping these issues in mind, we   prove existence  of the solitary wave solutions to \eqref{EPGM}, and their asymptotic behavior in the small amplitude limit for  the isothermal Euler-Poisson system \eqref{EP2} with $\sigma\ge0$. 
%To the best of our knowledge, this is the first rigorous result.
 %We will see that o
% Our main result is consistent with the formal approximation 
%%of the small amplitude solitary wave solutions to \eqref{EP2} 
%obtained in \cite{Sag}. 
Our results  assert that    the  formal expansion \eqref{FormalExpan} for \eqref{EPGM} as in \cite{Wa} is mathematically valid in the presence of solitary waves, and also justify the formal computation of \eqref{Sag6} in \cite{Sag}. Moreover our results extend 
% the aforementioned validity  
 to the isothermal Euler-Poisson system, that is, \eqref{EP2} with $\sigma > 0$.

%We emphasize that the reduction to the explicit ODE \eqref{Sag3} is possible only when $\sigma=0$ since $n$ is explicitly expressed in terms of $\phi$ in this case. In particular, the first integral of \eqref{Sag3} can be explicitly calculated as \eqref{Sag4}. On the other hand, the ODE system \eqref{ODE_n_E} for $(n,E)$ and its first integral \eqref{1st Int} are explicit even in the case $\sigma>0$.
%We emphasize that the reduction to the explicit ODE \eqref{Sag3} is possible only when $\sigma=0$ since $n$ is explicitly expressed in terms of $\phi$ in this case. In particular, the first integral of \eqref{Sag3} can be explicitly calculated as \eqref{Sag4}. 
%This makes the analysis significantly simpler than that of the isothermal case. 
%  The formulation \eqref{ODE_n_E} takes advantage in that it allows us to consider the explicit ODE and obtain the first integral of \eqref{ODE_n_E} explicitly even in the case $\sigma>0$.
The isothermal system \eqref{EP2}, unlike the pressureless case, can not be reduced to the explicit second-order ODE for $\phi$. Instead of dealing with the implicit ODE, we consider the explicit ODE system \eqref{ODE_n_E} for $(n,-\phi')$.
%, by which the analysis becomes much simpler.

We  present some numerical experiments in Figure \ref{FigNumeric}, which demonstrate that $\veps^{-1}(n^\veps-1)$ converges to $n_{\text{KdV}}$ as $\veps$ tends to zero. 
%We conduct the Runge-Kutta method of order 4 with some initial values suitably chosen by using the first integral \eqref{1st Int}--\eqref{Def_g} of the ODE system \eqref{ODE_n_E} with the far-field condition \eqref{bdCon n E}.
%
We employ the Runge-Kutta method of order 4 to solve the initial value problem for the ODE system \eqref{ODE_n_E} where the  initial values are suitably chosen by using the first integral \eqref{1st Int}--\eqref{Def_g} satisfying the far-field condition \eqref{bdCon n E}.

The paper is organized as follows. % In Section 3, we briefly introduce the formal approximation of solitary wave solution to \eqref{EP2} given in \cite{Sag} emphasizing that our main result coincides with it. 
%Also, we discuss some heuristic explanation on \eqref{GM} with \eqref{V} to illuminate its relevance to \eqref{KdV} and the solitary wave \eqref{solutionKdV} in the context of approximating the small amplitude solutions of \eqref{EP2}. 
Section 2--3 are devoted to the proof of Theorem \ref{MainThm} and Proposition \ref{cor2}, respectively. In Section 4.1, we obtain the uniform decay estimates for $\veps^{-1}(n^\veps-1,u^\veps,\phi^\veps)(\xi)$ and their higher order derivatives. In Section 4.2, we derive the remainder equations and finish the proof of Theorem \ref{MainThm3}. Section 5 contains miscellaneous calculations. For notational simplicity, we set
\begin{equation}\label{J}
J = J(\veps):= V+\gamma\veps = \sqrt{1+\sigma} + \gamma\veps
\end{equation}
for fixed $V=\sqrt{1+\sigma}$ and $\gamma>0$ throughout Section 3--5. $C$ denotes a generic constant independent of $\veps$ and $\xi$.
\begin{figure}[h]
\begin{tabular}{cc}
\resizebox{60mm}{!}{\includegraphics{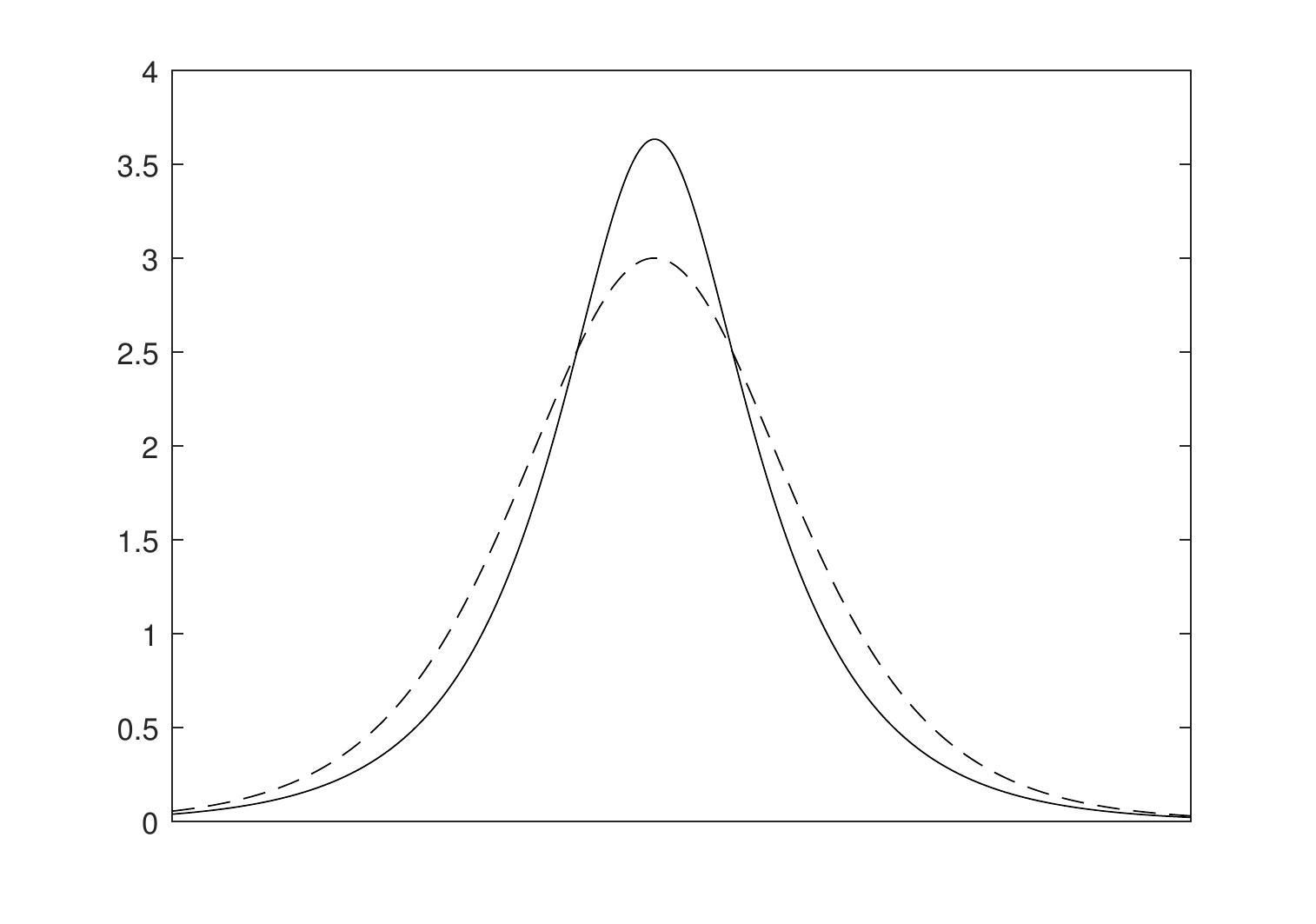}}  & \resizebox{60mm}{!}{\includegraphics{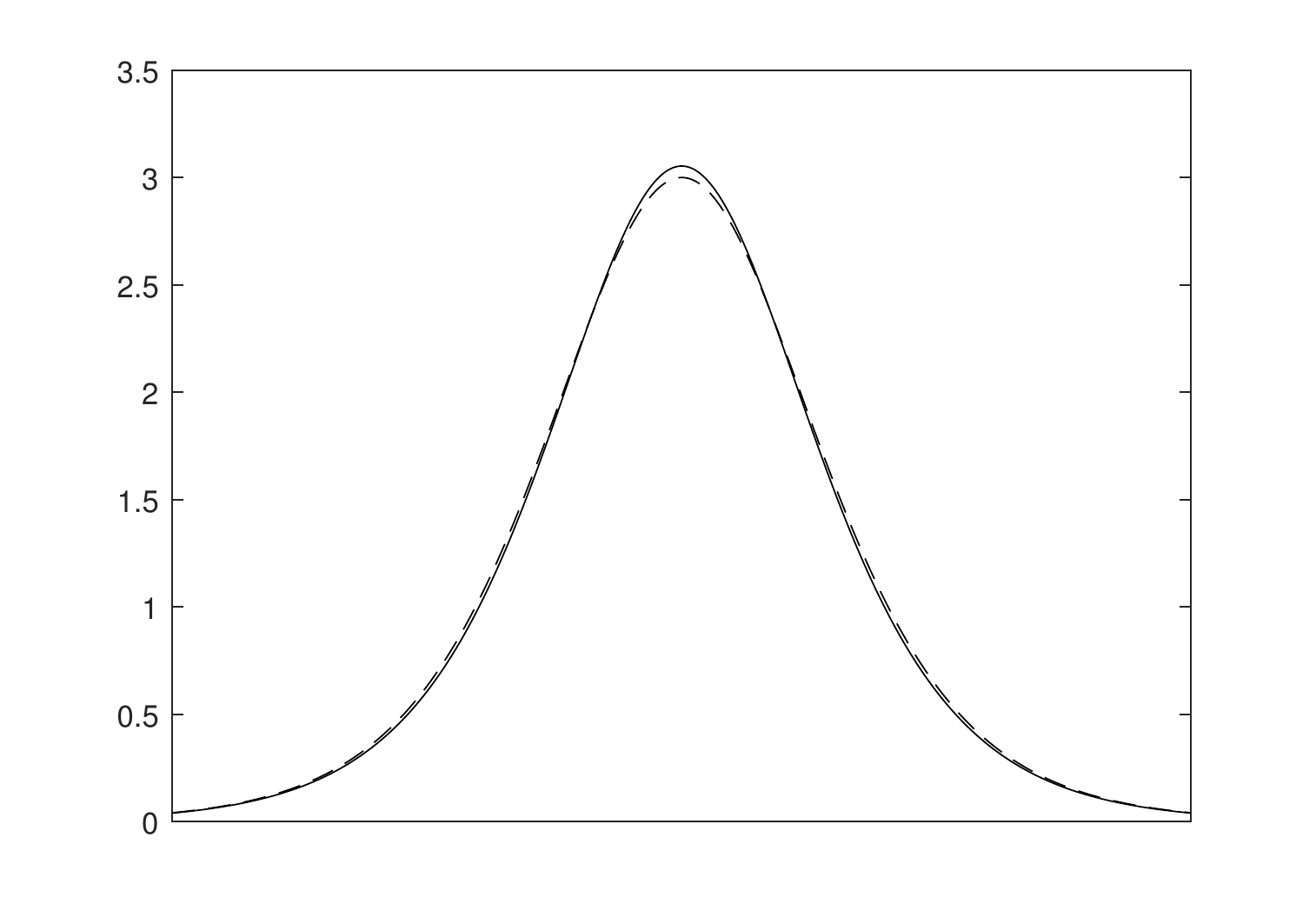}} \\
(a) $\sigma=0$ and $\veps=0.1$ & (b) $\sigma=0$ and $\veps=0.01$\\
\resizebox{60mm}{!}{\includegraphics{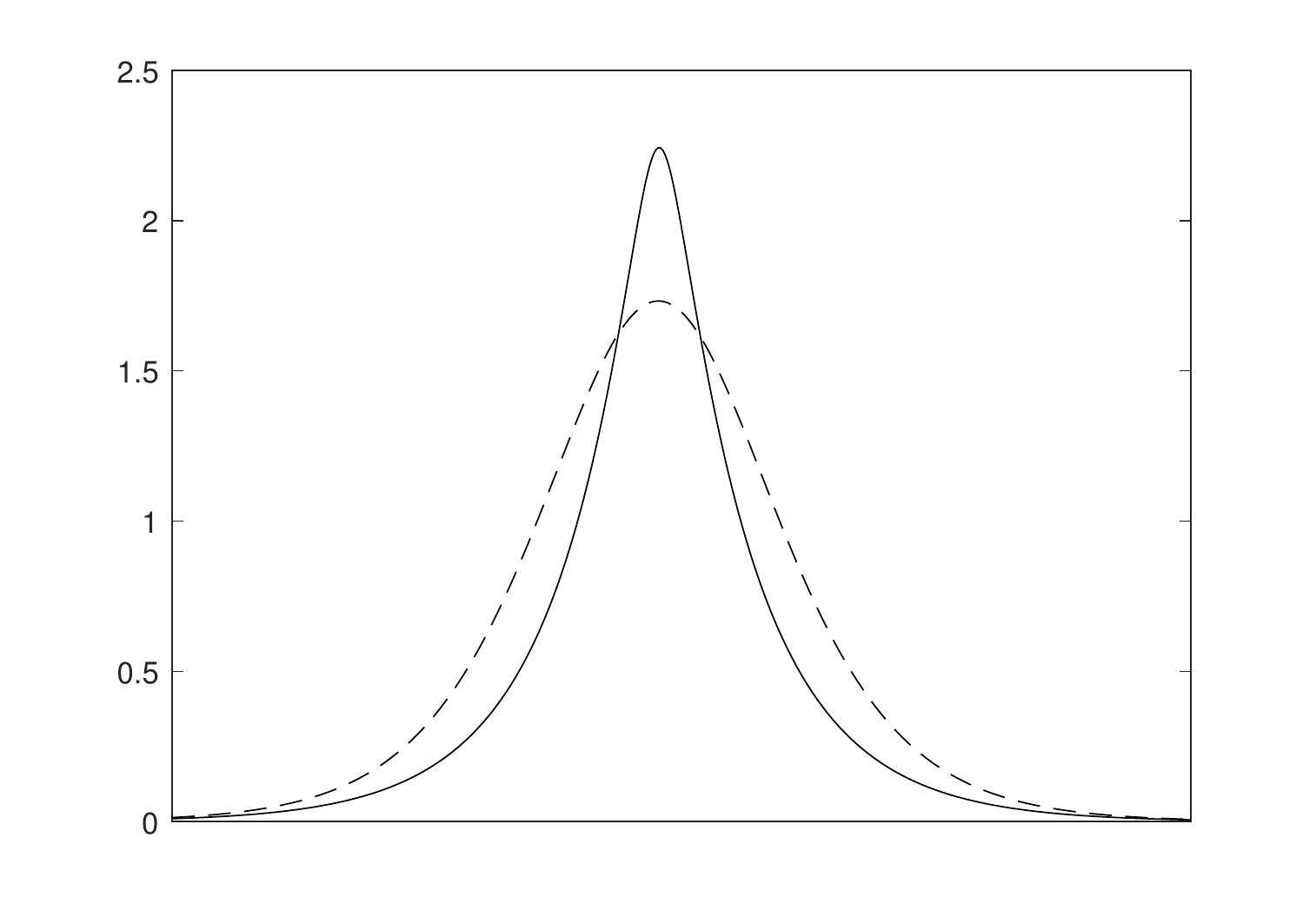}} & \resizebox{60mm}{!}{\includegraphics{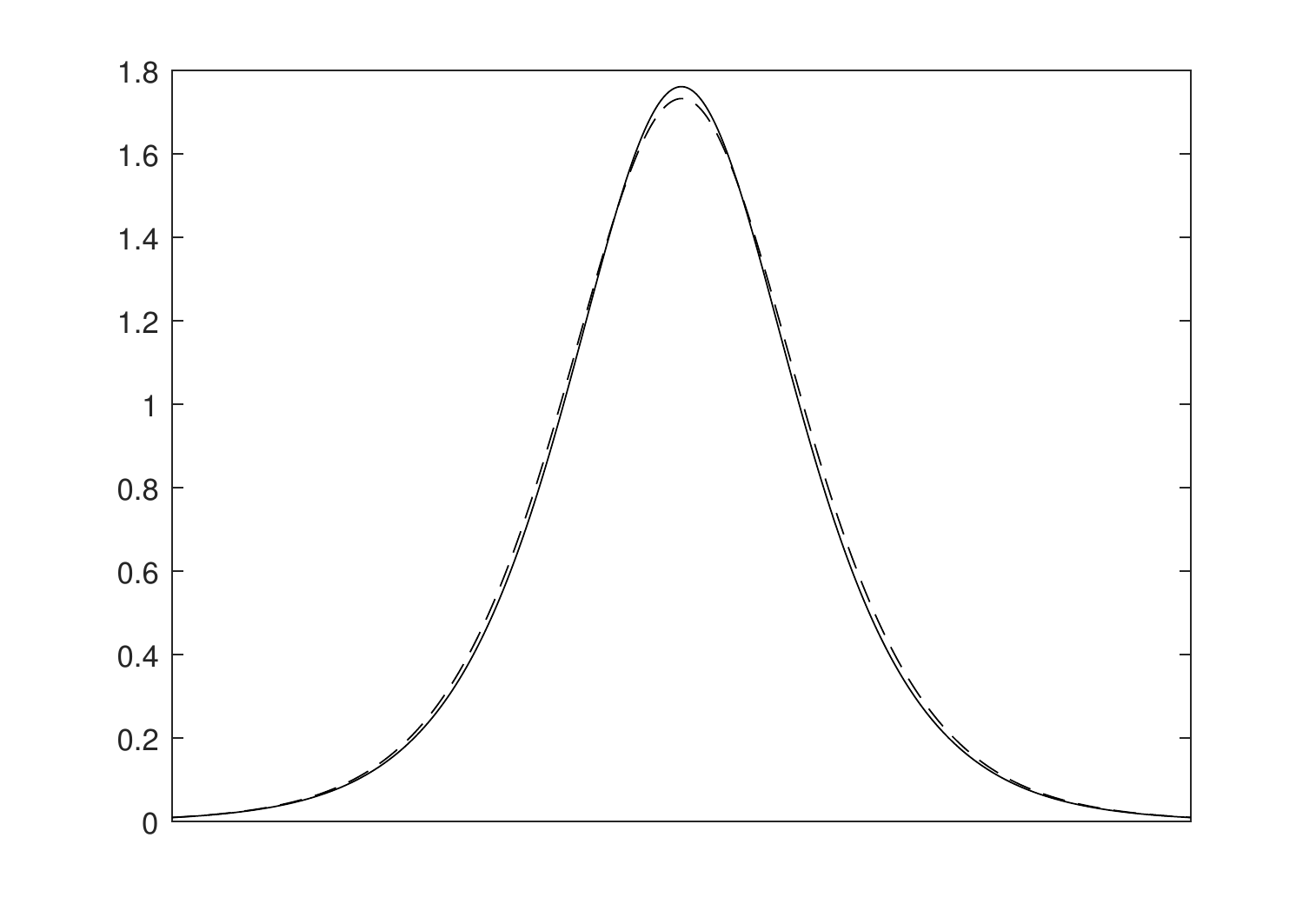}} \\
(c) $\sigma=2$ and $\veps=0.1$ & (d) $\sigma=2$ and $\veps=0.01$
\end{tabular}
\caption{Numerical experiments for comparison of $\veps^{-1}(n^\veps-1)$ (solid) with $n_{\text{KdV}}$ (dashed) in the stretched moving frame $\xi=\veps^{1/2}(x-(\sqrt{1+\sigma}+\veps t) )$.}
\label{FigNumeric}
\end{figure}

\section{Existence of solitary wave solution}
In this section, we carry out a phase plane analysis. To this end, we first reduce \eqref{TravelEq} to a first-order ODE system for $(n,-\phi')$.
\subsection{Reduction to a first-order ODE system}
Suppose that $(n,u,\phi)$ is a smooth solution to \eqref{TravelEq}--\eqref{bdCon+-inf}. Integrating \eqref{TravelEq1}--\eqref{TravelEq2} from $-\infty$ to $\xi$, we have
\begin{subequations}\label{TravelEqA}
\begin{align}[left = \empheqlbrace\,]
& -(V+\gamma\veps)n+nu = -(V+\gamma\veps), \label{TravelEqA1} \\ 
& -(V+\gamma\veps)u + \frac{1}{2}u^2 + \sigma\ln n = -\phi. \label{TravelEqA2}
\end{align}
\end{subequations}
By a simple manipulation, \eqref{TravelEqA} can be written as
\begin{subequations}\label{TravelEqB}
\begin{align}[left = \empheqlbrace\,]
& u=(V+\gamma\veps)\left( 1-\frac{1}{n} \right), \label{TravelEqB1} \\ 
& \phi = H(n), \label{TravelEqB2}
\end{align}
\end{subequations}
where 
\begin{equation}\label{Def_H}
H(n) : = \frac{(V+\gamma\veps)^2}{2}\left(1 - \frac{1}{n^2} \right)  - \sigma\ln n.
\end{equation}
Differentiating \eqref{TravelEqB2} in $\xi$, we have
\begin{equation}\label{Aux 2}
\phi' =  h(n)n',
\end{equation}
where
\begin{equation}\label{Def_h}
h(n):= \frac{dH(n)}{dn} = \frac{(V+\gamma\veps)^2}{n^3} - \frac{\sigma}{n}.
\end{equation}
By \textit{defining} a function 
\begin{equation}\label{Def_E}
E(\xi):=-\phi'(\xi),
\end{equation}
we obtain an \textit{ODE system for $n$ and $E$} from \eqref{Aux 2} and \eqref{TravelEq3}:
\begin{subequations}\label{ODE_n_E}
\begin{align}[left = \empheqlbrace\,]
& -h(n) n' = E, \label{ODE_n_E1} \\
& \veps  E' = n- e^{H(n)}.\label{ODE_n_E2}
\end{align}
\end{subequations}
Multiplying \eqref{ODE_n_E2} by $E=-\phi'$, and then using \eqref{ODE_n_E1} and \eqref{Def_h}, we get
\begin{equation}\label{Aux 4}
\begin{split}
\frac{\veps}{2}\left(E^2\right)'
& = -nh(n) n' + e^{H(n)}h(n) n' \\
%& = \left( -\frac{(V+\gamma\veps)^2  }{n^2} + \sigma \right) n' + \left(e^{H(n)}\right)' \\
& = \left( \frac{(V+\gamma\veps)^2}{n} + \sigma n + e^{H(n)}\right)'.
\end{split}
\end{equation}
Integrating \eqref{Aux 4} in $\xi$, we have 
\begin{equation}\label{Aux 5}
\frac{\veps}{2} E^2 =  \frac{(V+\gamma\veps)^2}{n} + \sigma n + e^{H(n)} + c
\end{equation}
for some constant $c$. From \eqref{Aux 5} and \eqref{bdCon+-inf}, we see that $n$ and $E$ must satisfy
\begin{equation}\label{bdCon n E}
n \to 1, \quad  E \to 0 \quad \text{as}\quad  |\xi| \to \infty.
\end{equation}

Conversely, suppose that $(n,E)$ is a smooth solution to \eqref{ODE_n_E} with \eqref{bdCon n E}. Defining $u$ and $\phi$ as \eqref{TravelEqB}, we see that $(n,u,\phi)$ satisfies \eqref{TravelEq1}-\eqref{TravelEq2} and \eqref{bdCon+-inf}. Moreover, from \eqref{Aux 2} and \eqref{ODE_n_E1}, we have $-\phi'=E$. Thus $(n,u,\phi)$ also satisfies \eqref{TravelEq3} by \eqref{ODE_n_E2}. We remark that \eqref{Aux 5} gives a first integral of the system \eqref{ODE_n_E} with \eqref{bdCon n E}:
\begin{equation}\label{1st Int}
\frac{\veps}{2}E^2 = g(n) -g(1),
\end{equation}
where
\begin{equation}\label{Def_g}
g(n):=  \frac{(V+\gamma\veps)^2}{n} + \sigma n + e^{H(n)}.
\end{equation}

\subsection{Stationary points}
We examine the stationary points of \eqref{ODE_n_E} on $(n,E)$ plane. From \eqref{Def_h}, we see that when $\sigma>0$, 
\begin{equation}\label{sign h}
h(n)
\left\{
\begin{array}{l l}
=0 \;\; \text{for} \;\; n=n_s, \\ 
<0 \;\; \text{for} \;\; n>n_s, \\
>0 \;\; \text{for} \;\; 0<n<n_s,
\end{array}
\right.
\end{equation}
where
\begin{equation}\label{Def n^s}
n_s:=\frac{V+\gamma\veps}{\sqrt{\sigma}},
\end{equation}
and when $\sigma=0$, 
\begin{equation}\label{sign h cold}
h(n)>0 \;\; \text{for} \;\; n>0.
\end{equation}
Henceforth, we will assume 
\begin{enumerate}
\item[\textbf{(A1)}] $n<n_s$ \quad when $\sigma>0$.
\end{enumerate}
We will see later that \textbf{(A1)} is valid for our analysis. Since $h>0$ for $\sigma \geq 0$ under the assumption \textbf{(A1)}, the stationary points of the system \eqref{ODE_n_E} are where 
\begin{equation}\label{Aux 8}
\frac{E}{h(n)} = 0 \quad \text{and} \quad  n=e^{H(n)}
\end{equation}
hold. Hence, all stationary points lie on $n$ axis. For $n>0$, we have
\begin{equation}\label{Aux 9}
n=e^{H(n)} \quad \Longleftrightarrow \quad  l(n):=\ln n - H(n)=0.
\end{equation}
One may easily check that the function $l(n)$ is strictly decreasing on $(0,n_c)$ and strictly increasing on $(n_c,\infty)$, where
\begin{equation}\label{Def n_c}
n_c: = \frac{V+\gamma\veps}{\sqrt{1+\sigma}}.
\end{equation}
The condition \eqref{Cond_J} implies that $1 < n_c$ for $\sigma\geq 0$. Moreover, since $l(1) = 0$ and $\lim_{n \to \infty}l(n) = \infty$, it follows that $l(n)$ has only two zeros $1$ and $n_{ce}$ such that
\begin{equation}\label{Aux 10}
1 <n_c< n_{ce}.
\end{equation}
Thus the system \eqref{ODE_n_E} has only two stationary points $(1,0)$ and $(n_{ce},0)$. For a later purpose, we observe that for $\sigma \geq 0$,
\begin{equation}\label{Aux 6}
\left\{
\begin{array}{l l}
n  <  e^{H(n)} \quad \text{for} \; n \in (1,n_{ce}), \\ 
n  >  e^{H(n)} \quad \text{for} \; n \in (n_{ce},\infty).
\end{array} 
\right.
\end{equation}

\subsection{Local behavior}
The Jacobian of the system \eqref{ODE_n_E} is
\begin{equation}\label{jacobian}
\left( 
\begin{array}{cc}
\frac{E}{[h(n)]^2}\frac{dh(n)}{dn} & \frac{-1}{h(n)} \\
\frac{1}{\veps}\left[1-h(n)e^{H(n)} \right] & 0
\end{array}
\right).
\end{equation}
The trace of \eqref{jacobian} is zero at all stationary points $(1,0)$ and $(n_{ce},0)$. Since $n=1$ is a solution to \eqref{Aux 9}, we obtain from \eqref{sign h}--\eqref{sign h cold} and the condition \eqref{Cond_J} that
\[
\begin{split}
\frac{1 - h(1)e^{H(1)}}{\epsilon h(1)}
& = \frac{1+\sigma -(V+\gamma\veps)^2}{\epsilon h(1)} < 0.
\end{split}
\]
Hence, $(1,0)$ is a saddle point for $\sigma \geq 0$. Since $n=n_{ce}$ is a solution to \eqref{Aux 9}, we have from \eqref{Def n_c}--\eqref{Aux 10} that
\begin{equation}\label{AuxCalcul1}
\begin{split}
1 - h(n_{ce})e^{H(n_{ce})}
& = 1 + \sigma - \frac{(V+\gamma\veps)^2}{(n_{ce})^2}  \\
&  = (1 + \sigma)\left(1 -\frac{(n_c)^2}{(n_{ce})^2} \right) > 0.
\end{split}
\end{equation}
From \eqref{sign h cold} and \eqref{AuxCalcul1}, we see that $(n_{ce},0)$ is a center 
in the case $\sigma=0$. On the other hand, in the case $\sigma>0$, $(n_{ce},0)$ can be a saddle or a center since the sign of $h$ changes depending on the position of $n_{ce}$ with respect to $n_s$ (See \eqref{sign h}). It will be shown later that the condition \eqref{Cond_JHot} implies that
\begin{enumerate}
\item[\textbf{(A2)}] $n_{ce} < n_s$ when $\sigma>0$.
\end{enumerate}
We assume \textbf{(A2)} so that $(n_{ce},0)$ is a center.

\subsection{Direction of vector fields}
From \eqref{Aux 6} and the ODE \eqref{ODE_n_E2}, we find that $E'<0$ in the region where $ 1 < n < n_{ce}$, and $E' >0$ in the region where $n> n_{ce}$. Since $h(n)>0$ by \eqref{sign h}, \eqref{sign h cold} and \textbf{(A1)}, we see from the ODE \eqref{ODE_n_E1} that $n'<0$ in the region where $E>0$, and $n'>0$ in the region where $E<0$ (See Figure \ref{FigTrajec}).

\begin{figure}[h]
    \centering
        \includegraphics[scale=0.3]{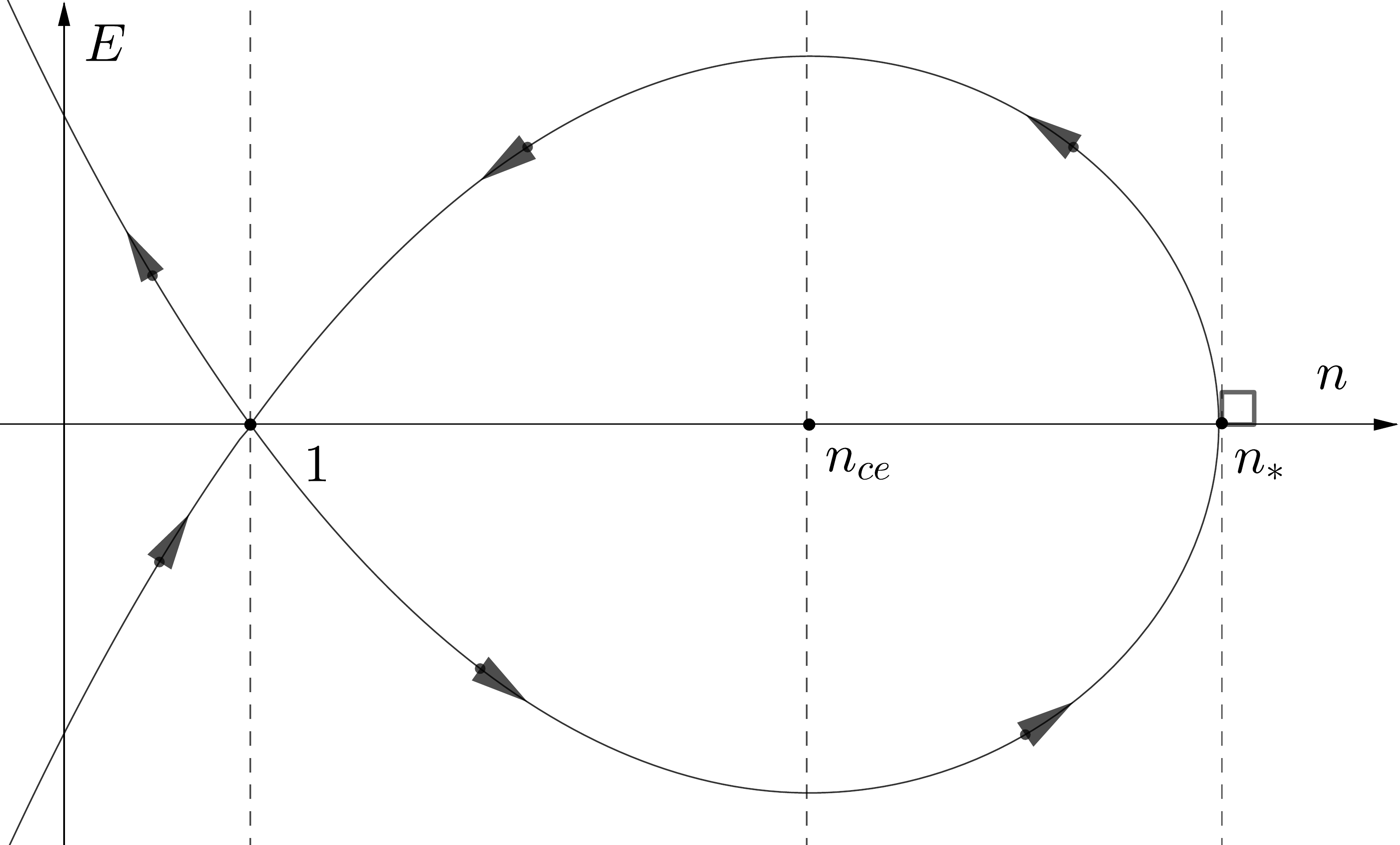}
        \caption{Trajectory curve}
        \label{FigTrajec}
\end{figure}

\subsection{First integral}
By the first integral \eqref{1st Int}--\eqref{Def_g}, the trajectory starting from the point $(1,0)$ with $E<0$ satisfies $E(n) = -\frac{\sqrt{2(g(n)-g(1))}}{\sqrt{\veps}}$. We have 
\begin{equation}\label{EPrime}
\frac{dE}{dn}(n) = -\frac{dg}{dn}(n)\frac{1}{\sqrt{2\veps(g(n)-g(1))}},
\end{equation}
where
\begin{equation}\label{gPrime}
\begin{split}
\frac{dg}{dn}(n)
& = -\frac{(V+\gamma\veps)^2}{n^2} + \sigma  +h(n) e^{H(n)} \\
& = -h(n)\left(n-  e^{H(n)}\right).
\end{split}
\end{equation}

We recall that $n=1$ and $n=n_{ce}$ satisfy \eqref{Aux 9} and that they are only such points. Hence, in the case $\sigma>0$, \eqref{gPrime} vanishes only at $1$, $n_{ce}$ and $n_s$. From \textbf{(A2)}, \eqref{sign h} and \eqref{Aux 6}, we see that $g(n)$ strictly increases on $(1,n_{ce})$ and strictly decreases on $(n_{ce}, n_s)$. This observation shows that if 
\begin{equation}\label{Cond g}
g(1)>g(n_s),
\end{equation}
then there exists a unique $n_\ast$ satisfying (See Figure \ref{FigGraph g})
\begin{equation}\label{g(n)=g(1)hot}
1<n_{ce} < n_\ast < n_s \quad  \text{and} \quad g(n_\ast)=g(1).
\end{equation}
It will be shown in Lemma \ref{LemCond_g} that the condition \eqref{Cond_JHot} implies \eqref{Cond g}. 

In the case $\sigma = 0$, $g(n)$ strictly decreases on $(n_{ce},\infty)$ since $h(n)>0$ for $n>0$. From  \eqref{Eq z_0 Cold} and the condition \eqref{Cond_JCold}, we have 
$$\lim_{n \to \infty}g(n) = \exp\left(\frac{(V+\gamma\veps)^2}{2}\right) < (V+\gamma\veps)^2+1 =g(1).$$
Therefore, there exists a unique $n_\ast$ such that 
\begin{equation}\label{g(n)=g(1)cold}
1<n_{ce} < n_\ast \quad \text{and} \quad g(n_\ast)=g(1).
\end{equation}
 
\begin{figure}[h]
        \begin{subfigure}[b]{0.5\textwidth}
        \includegraphics[scale=0.18]{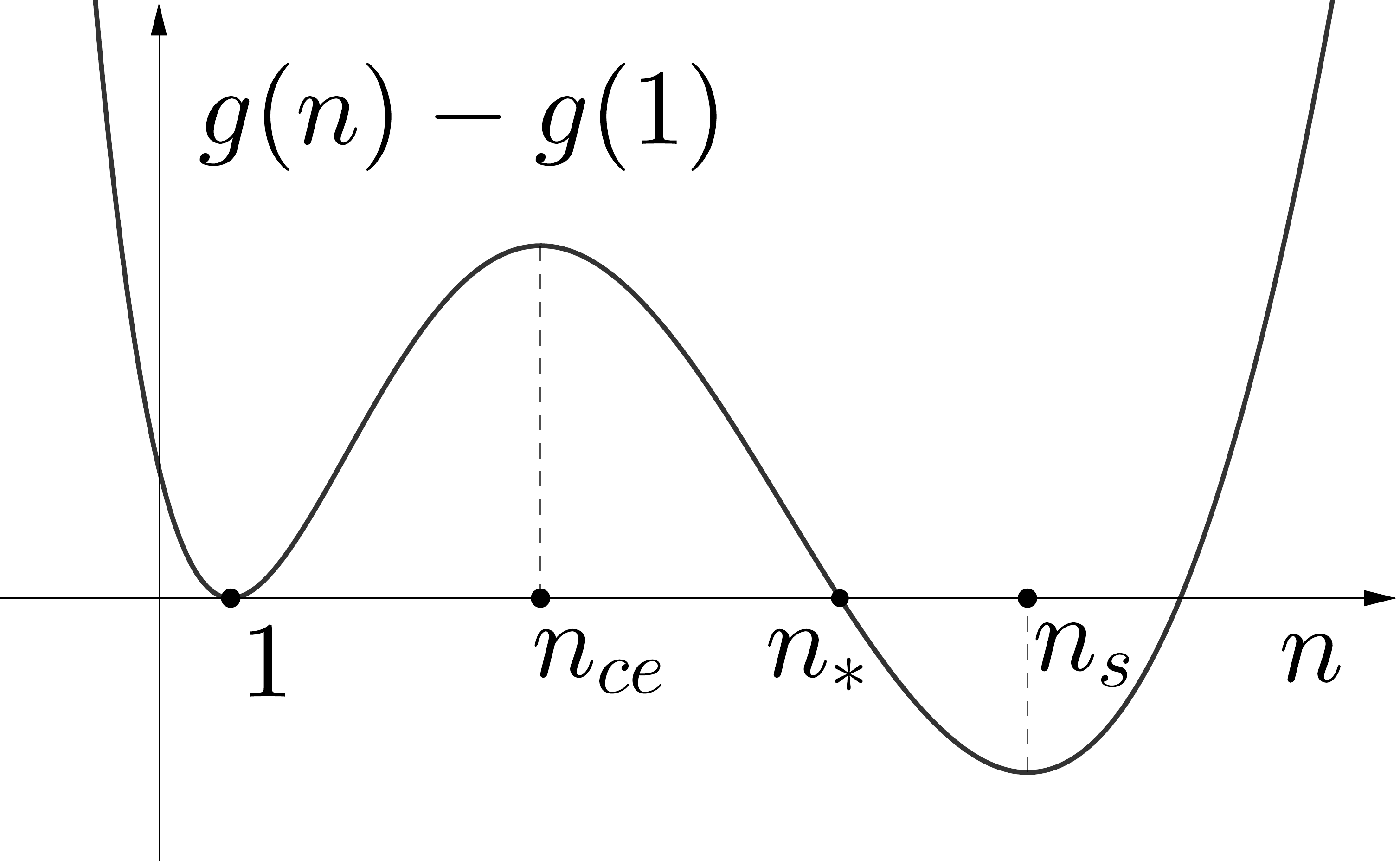}
        \caption{$\sigma>0$}
        \label{graphfofghot}
    \end{subfigure}
    ~  %add desired spacing between images, e. g. ~, \quad, \qquad, \hfill etc. 
      %(or a blank line to force the subfigure onto a new line)
    \begin{subfigure}[b]{0.5\textwidth}
        \includegraphics[scale=0.18]{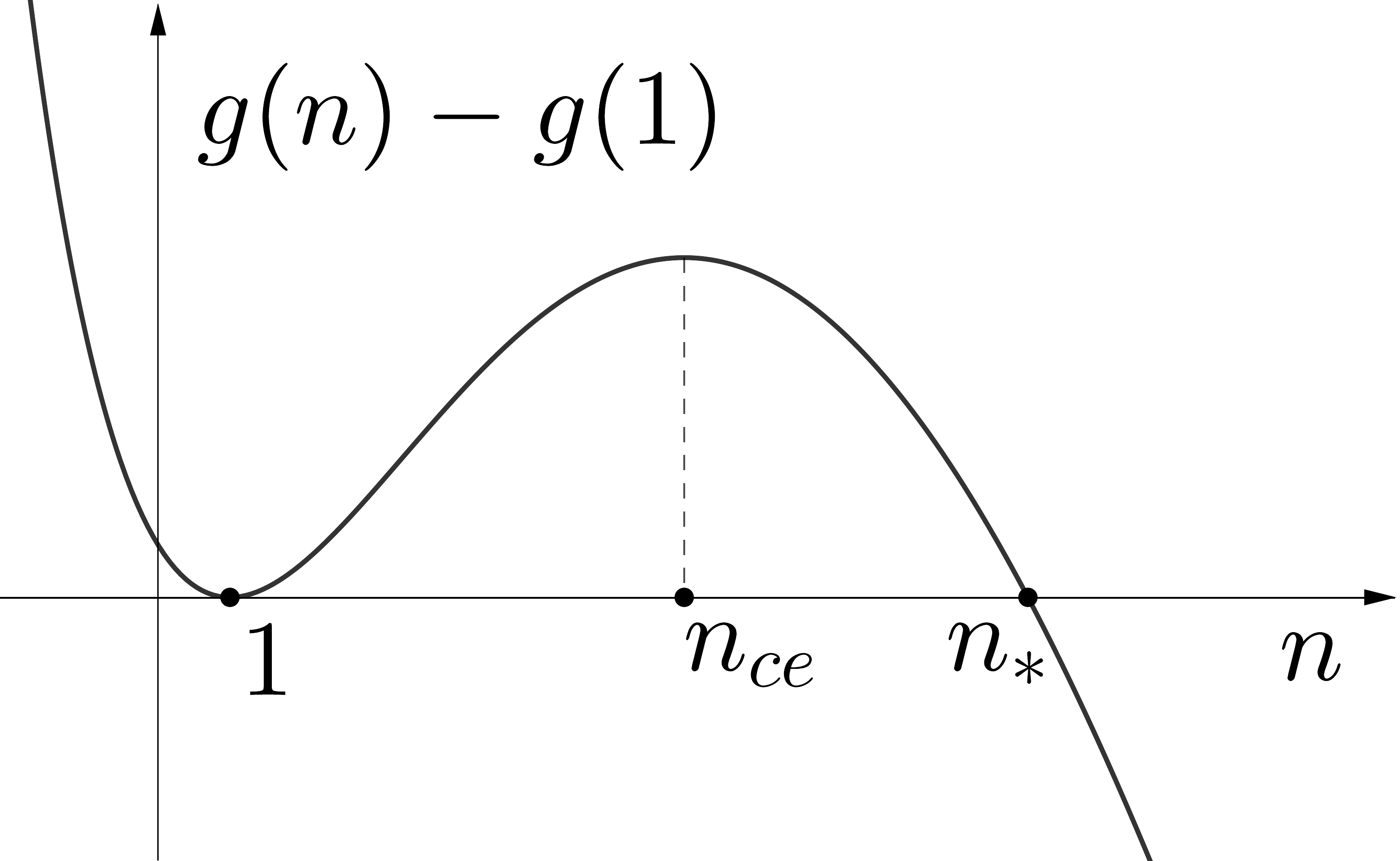}
        \caption{$\sigma=0$}
        \label{graphfofgcold}
    \end{subfigure}
    \caption{Graph of $g(n)-g(1)$}\label{graphofg}
    \label{FigGraph g}
\end{figure}

\begin{lemma}\label{LemCond_g}
The condition \eqref{Cond_JHot} implies that $g(1)>g(n_s)$.
\end{lemma}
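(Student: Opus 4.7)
The plan is to reduce the inequality $g(1) > g(n_s)$ to the auxiliary inequality \eqref{Aux4-1 lem} via a direct substitution, and then invoke that inequality. Let me write $J = V + \gamma\veps$ for brevity, so that $n_s = J/\sqrt{\sigma}$ and the condition \eqref{Cond_JHot} reads $\sqrt{(1+\sigma)/\sigma} < J/\sqrt{\sigma} < \zeta_\sigma$.

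First I would compute both sides explicitly. From $H(1) = 0$ we get $g(1) = J^2 + \sigma + 1$. Using $H(n_s) = \frac{J^2 - \sigma}{2} - \sigma \ln(J/\sqrt{\sigma})$, a short calculation gives
$$g(n_s) = \frac{J^2}{n_s} + \sigma n_s + e^{H(n_s)} = 2J\sqrt{\sigma} + \left(\frac{\sqrt{\sigma}}{J}\right)^{\sigma} e^{(J^2 - \sigma)/2}.$$
Therefore $g(1) > g(n_s)$ is equivalent to
$$(J - \sqrt{\sigma})^2 + 1 > \left(\frac{\sqrt{\sigma}}{J}\right)^{\sigma} \exp\!\left(\frac{J^2 - \sigma}{2}\right).$$

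Next, I would substitute $z := J/\sqrt{\sigma}$, which converts the left-hand side into $\sigma(z-1)^2 + 1$ and the right-hand side into $z^{-\sigma}\exp(\sigma(z^2 - 1)/2)$. Multiplying through by $z^\sigma$, the inequality becomes
$$z^{\sigma}\bigl[\sigma(z-1)^2 + 1\bigr] > \exp\!\left(\frac{\sigma}{2}(z^2 - 1)\right),$$
which is precisely \eqref{Aux4-1 lem}.

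Finally, the condition \eqref{Cond_JHot} translates in the $z$ variable to $\sqrt{(1+\sigma)/\sigma} < z < \zeta_\sigma$. Since $\sqrt{(1+\sigma)/\sigma} > 1$, in particular $z \in (1, \zeta_\sigma)$, so \eqref{Aux4-1 lem} applies and the desired strict inequality holds. This completes the proof.

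The argument is essentially an algebraic bookkeeping step followed by an invocation of the auxiliary inequality. I expect no real obstacle; the only mildly delicate point is keeping track of the substitution $z = J/\sqrt{\sigma}$ so that the exponential and power terms line up exactly with the form in \eqref{Aux4-1 lem}. In particular, one should note that the root $z = 1$ of \eqref{Aux3 lem-1} corresponds to $J = \sqrt{\sigma}$, not to the physical regime \eqref{Cond_JHot}; the relevant interval $(1, \zeta_\sigma)$ is exactly where strict inequality in \eqref{Aux4-1 lem} is available.
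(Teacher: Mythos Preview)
Your proof is correct and follows essentially the same route as the paper: both compute $g(1)$ and $g(n_s)$ explicitly, rearrange the inequality $g(1)>g(n_s)$ into the form \eqref{Aux4-1 lem} with $z=n_s=J/\sqrt{\sigma}$, and then invoke \eqref{Aux4-1 lem} via the hypothesis \eqref{Cond_JHot}. The only cosmetic difference is that you introduce the variable $z$ explicitly, whereas the paper works directly with $n_s$; the algebra and logic are identical.
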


\begin{proof}[Proof of Lemma \ref{LemCond_g}]
From the definition \eqref{Def_g} of $g$, we need to show that
\begin{equation}\label{Aux1 lem}
(V+\gamma\veps)^2 + \sigma + 1 > \frac{(V+\gamma\veps)^2}{n_s} + \sigma n^s + e^{H(n_s)}.
\end{equation}
By the definition \eqref{Def n^s} of $n_s$, \eqref{Aux1 lem} can be written as 
\begin{equation}\label{Aux2 lem}
\sigma(n_s)^2 + \sigma + 1 > 2\sigma n_s +  e^{H(n_s)}.
\end{equation}
On the other hand, using the definitions \eqref{Def_H} and \eqref{Def n^s} of $H$ and $n_s$, we have
\[
\begin{split}
H(n_s)
& =\frac{(V+\gamma\veps)^2}{2}\left(1 - \frac{1}{(n_s)^2} \right)  - \sigma\ln n_s  \\
& = \frac{\sigma}{2}\left((n_s)^2 - 1 \right) - \sigma\ln n_s.
\end{split}
\]
Therefore, \eqref{Aux2 lem} is equivalent to
\[
\sigma(n_s)^2 + \sigma + 1 > 2\sigma n_s + (n_s)^{-\sigma} \exp \left(\frac{\sigma}{2}\left((n_s)^2 - 1 \right) \right),
\]
and equivalently, we have
\begin{equation}\label{Aux4 lem}
(n_s)^{\sigma}\left[\sigma(n_s - 1)^2 + 1 \right] > \exp \left(\frac{\sigma}{2}\left((n_s)^2 - 1 \right) \right).
\end{equation}
From the definition of $n_s$, \eqref{Aux4-1 lem}, and \eqref{Aux4 lem}, we see that \eqref{Cond_JHot} implies \eqref{Cond g}.
\end{proof}

\begin{proof}[Proof of Theorem \ref{MainThm}]
We first observe that Lemma \ref{LemCond_g} implies \textbf{(A2)}. Suppose that $n_{ce} \geq n_s$. By applying  \eqref{sign h} and \eqref{Aux 6}, we see from \eqref{gPrime} that $g(n)$ strictly increases on $(1,n_s)$. This yields that $g(1) < g(n_s)$, which contradicts $g(1)>g(n_s)$.

From \eqref{EPrime}-\eqref{gPrime} and  \eqref{g(n)=g(1)hot}--\eqref{g(n)=g(1)cold}, we have for $\sigma \geq 0$ that
\[
\lim_{n \to n_\ast^-}\frac{dE}{dn}(n) = +\infty.
\]
Thus, by a standard argument, one can show that the trajectory starting from the saddle point $(1,0)$ with $E<0$ reaches at $(n_\ast,0)$ in a finite time. From the first integral \eqref{1st Int}, we see that the phase portrait is symmetric about $n$ axis. This fact and the direction of vector fields imply that the trajectory must connect the point $(1,0)$ following a homoclinic orbit (See Figure \ref{FigTrajec}). This also shows that \textbf{(A1)} is valid since  $n_\ast < n_s$ from \eqref{g(n)=g(1)hot}. Hence a non-trivial solution $(n,E)(\xi)$ to \eqref{ODE_n_E} with \eqref{bdCon n E} exists, and it follows from the discussion on the reduction to the ODE system that a smooth non-trivial solution $(n,u,\phi)(\xi)$ to \eqref{TravelEq}--\eqref{bdCon+-inf} exists. From \eqref{TravelEqB}, one can check that $(n,u,\phi)$ also satisfies \eqref{Symmetric}--\eqref{SignOfSols} upon a shift so that \eqref{max-v} holds. This finishes the proof.
\end{proof}

\section{Peak of solitary wave solution}
We divide the proof of Proposition \ref{cor2} into four steps. In Step 1, we show that
 \begin{equation}\label{n-1 to 0}
\lim_{\veps \to 0}n_\ast^\veps = 1
\end{equation} 
as well as that $V=\sqrt{1+\sigma}$ is necessary for \eqref{n-1 to 0}. In Step 2, we obtain a rough estimate for $n_\ast^\veps-1$. This bound enable us to get a sharp estimate for $n_\ast^\veps$ in Step 3. Using this result, we obtain the sharp estimates for $u_\ast^\veps$ and $\phi_\ast^\veps$ in Step 4.

\begin{proof}[Proof of Proposition \ref{cor2}] 
\textit{Step 1}: 
%We can rewrite \eqref{Def n_c} as 
%\[
%n_c^\veps-1 = \frac{1}{n_c^\veps+1}\left( \frac{V^2}{1+\sigma}-1 + \frac{\gamma\veps}{1+\sigma}(2V+\gamma\veps) \right).
%\]
We note that $1<n_c^\veps < n_\ast^\veps$ by \eqref{Aux 10} and \eqref{g(n)=g(1)hot} (or \eqref{g(n)=g(1)cold} for $\sigma=0$). By \eqref{n-1 to 0} together with \eqref{Def n_c}, we see that $V=\sqrt{1+\sigma}$ is a necessary condition for \eqref{n-1 to 0}. 

For the case $\sigma>0$, we recall from \eqref{g(n)=g(1)hot}--\eqref{g(n)=g(1)cold} that $n_\ast^\veps$ is defined as a unique root satisfying $g(n_\ast^\veps)=g(1)$ and $1 < n_\ast^\veps < n_s^\veps$. We investigate the behavior of $g$ as $\veps \to 0$ by considering the limiting case $\veps=0$. It is clear from the definition \eqref{Def_g} of $g$ that  $g(n)$ uniformly converges to a function
\[
g_0(n) := \frac{1+\sigma}{n} + \sigma n + \exp\left(\frac{1+\sigma}{2} - \frac{1+\sigma}{2n^2} - \sigma \ln n\right)
\]
as $\veps \to 0$ on $[1,L]$ for any $L>1$. We have
\begin{equation}\label{Aux Corollary1}
\frac{dg_0}{dn}(n) = -\left( \frac{1+\sigma}{n^3} -\frac{\sigma}{n} \right)\underbrace{\left[ n-\exp\left(\frac{1+\sigma}{2} - \frac{1+\sigma}{2n^2} - \sigma \ln n\right)\right]}_{=:\mathfrak{g}_0(n)} .
\end{equation}
One may easily check that $\mathfrak{g}_0(n)=0$ has a unique solution $n=1$ and that 
\begin{equation}\label{Aux Corollary}
\mathfrak{g}_0(n)>0 \quad \text{for} \quad n \in (1,\infty).
\end{equation}
%One may easily check that the equation
%\[
%n=\exp\left(\frac{1+\sigma}{2} - \frac{1+\sigma}{2n^2} - \sigma \ln n\right)
%\]
%has a unique solution $n=1$ and that
%\begin{equation}\label{Aux Corollary}
%n > \exp\left(\frac{1+\sigma}{2} - \frac{1+\sigma}{2n^2} - \sigma \ln n\right) \quad \text{for} \quad n \in (1,\infty).
%\end{equation}
Thus, for the case $\sigma>0$, $\frac{dg_0}{dn}$ vanishes only at $n=1$ and $n=n_s^0:=\sqrt{\frac{1+\sigma}{\sigma}}$, and it follows from \eqref{Aux Corollary1}--\eqref{Aux Corollary} that $g_0(n)$ strictly decreases on $(1,n_s^0)$. This implies that $g_0(n) < g_0(1)$ on $(1,n_s^0)$. Hence \eqref{n-1 to 0} holds by the construction of $n_\ast^\veps$ and the convergence of $g$ to $g_0$ as $\veps \to 0$. In a similar fashion, one can check that \eqref{n-1 to 0} holds for the case $\sigma =0$. We omit the details.

\noindent\textit{Step 2}: 
%From the definition \eqref{Def_g} of $g$ and  \eqref{g(n)=g(1)hot}--\eqref{g(n)=g(1)cold}, $n_\ast^\veps$ satisfies
%\begin{equation}\label{AuxEp3}
%J^2 + \sigma + 1 - \frac{J^2}{n_\ast^\veps} - \sigma n_\ast^\veps = \exp \left(\frac{J^2}{2} - \frac{J^2}{2(n_\ast^\veps)^2} - \sigma \ln n_\ast^\veps \right),
%\end{equation}
%where $J$ is defined in \eqref{J}. Taking the logarithm on both sides of \eqref{AuxEp3}, we get
%\begin{equation}\label{AuxEp1}
%q(n_\ast^\veps)  = 0,
%\end{equation} 
%where  
%\[
%q(n) := \ln \left(1 + \sigma + J^2   - \frac{J^2}{n} - \sigma n \right) + \sigma \ln n + \frac{J^2}{2n^2} - \frac{J^2}{2}
%\]
%is a well-defined and smooth function on $[1,n_s^\veps)$. Indeed, in the case $\sigma>0$, the argument of the logarithm, 
%\begin{equation}\label{AuxEp2}
%\tilde{q}(n):= 1+ \sigma + J^2   - \frac{J^2}{n} - \sigma n,
%\end{equation}
%is a strictly increasing function on $[1,n_s^\veps)$ since $\tilde{q}(1)=1$ and $\frac{d\tilde{q}(n)}{dn}= \frac{J^2}{n^2}-\sigma > 0$ on $[1,n_s^\veps)$ by the definition \eqref{Def n^s} of $n_s^\veps$. The case $\sigma=0$ is trivial.
Applying Taylor's theorem to $g(n_\ast^\veps)=g(1)$ around $n=1$, we obtain 
\begin{equation}\label{AuxEp4}
\frac{g^{(2)}(1)}{2}(n_\ast^\veps-1)^2+ \frac{g^{(3)}(1)}{3!}(n_\ast^\veps-1)^3 + \frac{g^{(4)}(n^\veps_b)}{4!}(n_\ast^\veps-1)^4 = 0
\end{equation}
for some $1<n_b^\veps<n_\ast^\veps$. By the fact that $\lim_{\veps \to 0} n_\ast^\veps=1$, we find that there exist constants $\veps_{0},C>0$ and functions $\mathfrak{g}_2(\veps)$, $\mathfrak{g}_3(\veps)$ of $\veps$ such that for all $0<\veps<\veps_0$,
\begin{subequations}\label{Q2Q3}
\begin{align}[left = \empheqlbrace\,]
& g^{(2)}(1) =  2V\gamma\veps+\veps^2 \mathfrak{g}_2(\veps), \label{Q2} \\ 
& g^{(3)}(1) = -2(1+\sigma) + \veps \mathfrak{g}_3(\veps), \label{Q3} \\
& \frac{|g^{(4)}(n_b^\veps)|}{4!} + |\mathfrak{g}_2(\veps)| + |\mathfrak{g}_3(\veps)| < C \label{Q4}
\end{align}
\end{subequations}
(See Appendix for \eqref{AuxEp4} and \eqref{Q2Q3}). Dividing \eqref{AuxEp4} by $(n_\ast^\veps-1)^2$ and using \eqref{Q2}, we have
\begin{equation}\label{AuxEp5}
(n_\ast^\veps-1)\left(-\frac{g^{(3)}(1)}{3!} - \frac{g^{(4)}(n_b^\veps)}{4!}(n_\ast^\veps-1) \right) =  V\gamma\veps + \frac{\veps^2}{2}\mathfrak{g}_2.
\end{equation}
By \eqref{n-1 to 0} and \eqref{Q3}--\eqref{Q4}, we can choose sufficiently small $\veps_0>0$ (smaller than $\veps_0$ for \eqref{Q2Q3}) so that for all $0<\veps<\veps_{0}$,  
\begin{equation}\label{AuxEp6}
-\frac{g^{(3)}(1)}{3!} - \frac{g^{(4)}(n_b^\veps)}{4!}(n_\ast^\veps-1)>-\frac{g^{(3)}(1)}{3!} - \frac{1}{8} > \frac{1+\sigma}{4} - \frac{1}{8} \geq \frac{1}{8}.
\end{equation}
Dividing \eqref{AuxEp5} by the left-hand side (LHS) of \eqref{AuxEp6},  we obtain
\begin{equation}\label{Aux1 Thm2}
0<n_\ast^\veps-1 <  8V \gamma\veps + 4\veps^2\mathfrak{g}_2.
\end{equation}
\textit{Step 3}: Dividing \eqref{AuxEp5} by $-\frac{g^{(3)}(1)}{3!}$, we get
\begin{equation}\label{Aux3 Thm2}
\begin{split}
n_\ast^\veps-1 +\frac{6V\gamma}{g^{(3)}(1)}\veps
& =  - \frac{3\mathfrak{g}_2}{g^{(3)}(1)}\veps^2 - \frac{g^{(4)}(n_b^\veps)}{4g^{(3)}(1)}(n_\ast^\veps-1)^2 =: \mathfrak{g}_4.
\end{split}
\end{equation}
By \eqref{Q3}--\eqref{Q4} and \eqref{Aux1 Thm2}, we know  there is a constant $C>0$ such that 
\begin{equation}\label{C_1tilde}
|\mathfrak{g}_4| \leq \veps^2 C
\end{equation}
for all $0<\veps<\veps_0$.  Subtracting $3\gamma V^{-1}\veps$ from \eqref{Aux3 Thm2}, we obtain 
\begin{equation}\label{Aux2 Thm2}
\begin{split}
n_\ast^\veps-1- 3\gamma V^{-1}\veps
& = \frac{-6V^2-3g^{(3)}(1)}{Vg^{(3)}(1)} \gamma\veps + \mathfrak{g}_4.
\end{split}
\end{equation}
Using \eqref{Q3}--\eqref{Q4} and $V=\sqrt{1+\sigma}$, we see that there is a constant $C>0$ such that
\begin{equation}\label{C_1tilde1}
\begin{split}
\left| \frac{6V^2+3g^{(3)}(1)}{Vg^{(3)}(1)} \right| 
& < C|6V^2+3g^{(3)}(1)| = 3C\veps|\mathfrak{g}_3| < \veps C.
\end{split}
\end{equation}
Now \eqref{C_1tilde}--\eqref{C_1tilde1} imply that there is a constant $C>0$ such that for all $0<\veps<\veps_0$,
\begin{equation}\label{Aux4 Thm2}
\begin{split}
\left| n_\ast^\veps-1- 3\gamma V^{-1}\veps \right|
& \leq  \veps^2 C.
\end{split}
\end{equation}
\textit{Step 4}: 
By applying Taylor's theorem to \eqref{TravelEqB1}, we have 
\begin{equation}\label{Bd u}
\begin{split}
u_\ast^\veps  = (V+\gamma\veps)\left[(n_\ast^\veps-1) -\frac{2}{(n_b^\veps)^3}(n_\ast^\veps-1)^2  \right]
\end{split}
\end{equation}
for some $1<n_b^\veps<n_\ast^\veps$. Subtracting $3\gamma\veps$ from \eqref{Bd u} and using \eqref{Aux4 Thm2}, we obtain
\begin{equation}\label{MaxuEpsSquare}
\begin{split}
\left| u_\ast^\veps-3\gamma\veps \right|
& =  \left|  V\left(n_\ast^\veps-1-3\gamma V^{-1}\veps\right) \right.\\
& \quad \left.
-\frac{2V}{(n_b^\veps)^3} (n_\ast^\veps-1)^2  + \gamma\veps(n_\ast^\veps-1) \left(1 -\frac{2}{(n_b^\veps)^3}(n_\ast^\veps-1)  \right)  \right|\\
& \leq \veps^2C.
\end{split}
\end{equation}
Similarly, we have from \eqref{TravelEqB2} that 
\begin{equation}\label{MaxphiEpsSquare1}
\begin{split}
\phi_\ast^\veps
& = (1+2V\gamma\veps + \gamma^2\veps^2)(n_\ast^\veps-1) + \frac{dh}{dn}(n_b^\veps) (n_\ast^\veps-1)^2
\end{split}
\end{equation}
for some $1<n_b^\veps<n_\ast^\veps$. Subtracting $3\gamma V^{-1}\veps$ from \eqref{MaxphiEpsSquare1} and using \eqref{Aux4 Thm2}, we obtain
\begin{equation}\label{MaxphiEpsSquare2}
\begin{split}
\left| \phi_\ast^\veps - 3\gamma V^{-1}\veps \right|
& = \left| \left(n_\ast^\veps-1 -3\gamma V^{-1}\veps \right) \right. \\
& \quad \left. + \gamma\veps(n_\ast^\veps-1)(2V + \gamma\veps) +\frac{dh}{dn}(n_b^\veps) (n_\ast^\veps-1)^2 \right|\\
& \leq \veps^2C.
\end{split}
\end{equation}
Combining \eqref{Aux4 Thm2}, \eqref{MaxuEpsSquare} and \eqref{MaxphiEpsSquare2}, we obtain \eqref{unibd_nuphi}. This completes the proof.
\end{proof}

\section{Asymptotic behavior of solitary wave solution}
To prove Theorem \ref{MainThm3}, it is enough to show that the uniform estimate \eqref{pointestimate} holds on the interval $[0,\infty)$ thanks to symmetry of the solutions, \eqref{Symmetric}. 
\subsection{Uniform decay estimate}
Throughout this subsection, we let
\begin{equation}
\widetilde{N}_\veps(\xi):=\frac{n^\epsilon(\xi)-1}{\veps} \quad \text{and} \quad \widetilde{E}_\veps(\xi):=\frac{E^\veps(\xi)}{\veps}=\frac{-{\phi^\veps}'(\xi)}{\veps}
\end{equation}
for notational simplicity. By Theorem \ref{MainThm}, $\widetilde{N}_\veps(\xi)>0$ for $\xi\in\mathbb{R}$ and $\widetilde{E}_\veps(\xi)\geq 0$ for $\xi \in [0,\infty)$. We first prove the following preliminary lemmas,
%A main purpose of Lemma \ref{LemmaAux1} is for the definite proof of Lemma \ref{LemmaAux3},
which will be crucially used to get the uniform decay estimate for $\veps^{-1}(n^\veps-1,u^\veps,\phi^\veps)$ in Proposition \ref{Prop1}.
% Specific values of the bounds in \eqref{secStatLem1} and the coefficient of $\widetilde{N}_\veps$ in \eqref{Compatibility} are not essential. 
We note that it is enough to control only $\widetilde{N}_\veps$ in Lemma \ref{LemmaAux3}  since the right-hand side  of the nonlinear ODE \eqref{ODE_n_E} is linear in $E^\veps$. 
\begin{lemma}\label{LemmaAux1}
There exist positive constants $\veps_0$, $C$, and $\delta_0$ such that the following hold:
\begin{enumerate}
\item For all $0<\veps<\veps_0$, 
\begin{subequations}\label{secStatLem1}
\begin{align} 
& \widetilde{N}_\veps(0) > 2\gamma V^{-1}, \label{LowerboundofN} \\
& 4\gamma^2 > \widetilde{E}_\veps'(0) > 2\gamma^2 , \label{2ndDerivBdPhiat0} \\
& 1/2<\sup_{\xi \in \mathbb{R}}\, h(n^\veps)<3/2, \label{EqAppen4} \\
& \sup_{\xi\in \mathbb{R}}\left( |\widetilde{N}_\veps'(\xi)| + |\widetilde{E}_\veps(\xi)| +|\widetilde{E}_\veps''(\xi) | \right) \leq C. \label{1stDerivBdPhi}
\end{align}
\end{subequations}
\item If $0<\delta<\delta_0$ and $\widetilde{N}_\veps \leq \delta$ for all $0<\veps<\veps_0$, then there holds
\begin{equation}\label{Compatibility}
 |\widetilde{E}_\veps|> \sqrt{\gamma} \, \widetilde{N}_\veps
\end{equation}
for all $0<\veps<\veps_0$.
\end{enumerate}
Here $C$, $\delta_0$ and $\delta$ are independent of $\veps$ and $\xi$.
\end{lemma}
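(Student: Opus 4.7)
My plan is to establish the estimates of part (1) using Taylor expansion around $n = 1$, leveraging the sharp peak estimate $n_*^\veps - 1 = 3\gamma V^{-1}\veps + O(\veps^2)$ from Proposition \ref{cor2} and the monotonicity $n^\veps(\xi) \leq n_*^\veps$ from Theorem \ref{MainThm}; then to deduce part (2) from the first integral \eqref{1st Int}--\eqref{Def_g}.

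First I would dispense with \eqref{LowerboundofN} and \eqref{EqAppen4}: dividing the Proposition \ref{cor2} estimate by $\veps$ gives $\widetilde{N}_\veps(0) = 3\gamma V^{-1} + O(\veps)$, which exceeds $2\gamma V^{-1}$ for small $\veps$; and since $1 \leq n^\veps \leq n_*^\veps = 1 + O(\veps)$ with $h(1) = J^2 - \sigma = 1 + O(\veps)$ by \eqref{Def_h} and \eqref{J}, we have $h(n^\veps) \to 1$ uniformly in $\xi$. Next, for \eqref{2ndDerivBdPhiat0}, I would evaluate \eqref{ODE_n_E2} at $\xi = 0$, Taylor-expand $e^{H(n)}$ around $n=1$ using $(d/dn)e^{H}|_{n=1} = h(1) = 1 + 2V\gamma\veps + O(\veps^2)$ and $(d^2/dn^2)e^{H}|_{n=1} = h(1)^2 + h'(1) = -2V^2 + O(\veps)$, and combine with the sharp value of $n_*^\veps - 1$ to get
\[
\widetilde{E}_\veps'(0) = \frac{n_*^\veps - e^{H(n_*^\veps)}}{\veps^2} = -2V\gamma \cdot \frac{3\gamma}{V} + V^2 \cdot \frac{9\gamma^2}{V^2} + O(\veps) = 3\gamma^2 + O(\veps),
\]
which lies strictly between $2\gamma^2$ and $4\gamma^2$ for small $\veps$.

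For the uniform bound \eqref{1stDerivBdPhi}, the first integral \eqref{1st Int} together with the expansions $g^{(2)}(1) = 2V\gamma\veps + O(\veps^2)$ and $g^{(3)}(1) = -2V^2 + O(\veps)$ from \eqref{Q2Q3} gives, after substituting $n - 1 = \veps\widetilde{N}_\veps$ and dividing by $\veps^2$,
\begin{equation}\label{KeyFirstInt}
\widetilde{E}_\veps^2 = 2V\widetilde{N}_\veps^2\Bigl(\gamma - \frac{V\widetilde{N}_\veps}{3}\Bigr) + O\bigl(\veps(\widetilde{N}_\veps^2 + \widetilde{N}_\veps^4)\bigr).
\end{equation}
Combined with the bound $\widetilde{N}_\veps \leq \widetilde{N}_\veps(0) = O(1)$, this controls $|\widetilde{E}_\veps|$; the ODE \eqref{ODE_n_E1} rewritten as $\widetilde{N}_\veps' = -\widetilde{E}_\veps/h(n^\veps)$ then controls $|\widetilde{N}_\veps'|$; and differentiating \eqref{ODE_n_E2} yields $\veps\widetilde{E}_\veps'' = \widetilde{N}_\veps'(1 - h(n^\veps)e^{H(n^\veps)})$, where a direct Taylor expansion gives $1 - h(n)e^{H(n)} = 2V\veps(V\widetilde{N}_\veps - \gamma) + O(\veps^2 + (n-1)^2) = O(\veps)$, so $|\widetilde{E}_\veps''|$ is uniformly bounded.

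For part (2), I use \eqref{KeyFirstInt} directly. Since $n^\veps > 1$ strictly by \eqref{SignOfSols}, we have $\widetilde{N}_\veps > 0$ pointwise, so dividing \eqref{KeyFirstInt} by $\widetilde{N}_\veps^2$ gives
\[
\frac{\widetilde{E}_\veps^2}{\widetilde{N}_\veps^2} = 2V\gamma - \frac{2V^2}{3}\widetilde{N}_\veps + O(\veps),
\]
where the $O(\veps)$ error uses the hypothesis $\widetilde{N}_\veps \leq \delta \leq 1$. Since $V = \sqrt{1+\sigma} \geq 1$, we have $(2V-1)\gamma \geq \gamma > 0$, so choosing $\delta_0 < 3(2V-1)\gamma/(4V^2)$ and then shrinking $\veps_0$ forces $\widetilde{E}_\veps^2/\widetilde{N}_\veps^2 > \gamma$, which is \eqref{Compatibility}. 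The main delicate point is tracking the $\veps$-dependence of the coefficients of $g$ alongside the size $n - 1 = O(\veps)$ carefully enough to extract the clean expansion \eqref{KeyFirstInt}; once that identity is in hand both the uniform bound in (1) and the ratio inequality in (2) fall out quickly.
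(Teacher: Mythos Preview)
Your proposal is correct and follows essentially the same route as the paper: Proposition~\ref{cor2} for \eqref{LowerboundofN}; Taylor expansion of $n-e^{H(n)}$ via \eqref{ODE_n_E2} for \eqref{2ndDerivBdPhiat0}; $h(1)=J^2-\sigma=1+O(\veps)$ plus $1\le n^\veps\le n_*^\veps$ for \eqref{EqAppen4}; the first integral \eqref{1st Int} for $|\widetilde E_\veps|$, then \eqref{ODE_n_E1} for $|\widetilde N_\veps'|$, then the differentiated \eqref{ODE_n_E2} for $|\widetilde E_\veps''|$; and finally the first integral divided by $\widetilde N_\veps^2$ for part~(2). The only cosmetic difference is that in part~(2) you expand one order further, isolating the coefficient $g^{(3)}(1)=-2V^2+O(\veps)$ explicitly and choosing $\delta_0<3(2V-1)\gamma/(4V^2)$, whereas the paper keeps the Lagrange remainder $g^{(3)}(n_b^\veps)$ intact and simply bounds $\bigl|\veps g^{(3)}(n_b^\veps)/(3g^{(2)}(1))\bigr|$ uniformly, then takes $\delta_0$ small enough that the square-root factor exceeds $1/\sqrt{2}$; the two thresholds are interchangeable and the arguments are equivalent.
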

\begin{lemma}\label{LemmaAux3}
There exist positive constants $\veps_0$ and $\delta_1$  such that the following holds: for each $0 < \delta < \delta_1$, there exists $\xi_\delta>0$ such that
\[
0<\widetilde{N}_\veps(\xi) \leq \delta 
\]
  for all $\xi \geq \xi_\delta$ and $0<\veps<\veps_0$. Here $\delta_1$, $\delta$, and $\xi_\delta$ are independent of $\veps$ and $\xi$.
\end{lemma}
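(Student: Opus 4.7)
The strict monotonicity \eqref{mono} of $\widetilde{N}_\veps$ on $(0,\infty)$, together with $\widetilde{N}_\veps(0)>0$ and $\widetilde{N}_\veps(\xi)\to 0$ as $\xi\to\infty$, reduces the problem to showing that for every sufficiently small $\delta>0$ there is a time $\xi_\delta$, independent of $\veps$, at which $\widetilde{N}_\veps(\xi_\delta)\leq\delta$. The plan is to parametrize $\xi$ by $\widetilde{N}=\widetilde{N}_\veps(\xi)$ using the first integral \eqref{1st Int}--\eqref{Def_g}, and then estimate the resulting travel-time integral uniformly in $\veps$.

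Setting $n=1+\veps\widetilde{N}_\veps$ in \eqref{1st Int}, Taylor-expanding $g$ at $n=1$, and using $g'(1)=0$ from \eqref{gPrime} together with the expansions \eqref{Q2}--\eqref{Q4}, I obtain
\[
\widetilde{E}_\veps^2 \;=\; Q_0(\widetilde{N}_\veps)+R_\veps(\widetilde{N}_\veps),\qquad Q_0(s):=2V\gamma s^2-\tfrac{2V^2}{3}s^3,\qquad |R_\veps(s)|\leq C\veps,
\]
where the bound on $R_\veps$ is uniform for $s$ in the bounded range provided by \eqref{1stDerivBdPhi} and Proposition \ref{cor2}. By \eqref{ODE_n_E1} and strict monotonicity,
\[
\xi_\veps(\delta) \;=\; \int_{\delta}^{\widetilde{N}_\veps(0)}\frac{h(1+\veps s)}{\sqrt{Q_0(s)+R_\veps(s)}}\,ds.
\]
The polynomial $Q_0$ vanishes only at $s=0$ and $s=3\gamma/V$, where $Q_0'(3\gamma/V)=-6\gamma^2$; by Proposition \ref{cor2} one has $\widetilde{N}_\veps(0)=3\gamma/V+O(\veps)$ while $Q_0(\widetilde{N}_\veps(0))+R_\veps(\widetilde{N}_\veps(0))=0$ by construction.

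To bound $\xi_\veps(\delta)$ I would split the integral at $s_0:=3\gamma/V-\delta_0$ for some fixed small $\delta_0>0$. On the bulk interval $[\delta,s_0]$ the leading polynomial satisfies $Q_0(s)\geq c(\delta,\delta_0)>0$, hence $Q_0+R_\veps\geq c(\delta,\delta_0)/2$ for $\veps$ small, and the uniform bound \eqref{EqAppen4} yields a bulk contribution bounded by a constant depending only on $\delta,\delta_0$. Near $s=\widetilde{N}_\veps(0)$ the integrand has a square-root singularity; computing $Q_\veps'(\widetilde{N}_\veps(0))=Q_0'(3\gamma/V)+O(\veps)=-6\gamma^2+O(\veps)$, one has $Q_0(s)+R_\veps(s)\geq c_1(\widetilde{N}_\veps(0)-s)$ on a uniform neighborhood, so the remaining piece is bounded by $C\sqrt{\delta_0}$. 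Setting $\xi_\delta:=1+\sup_{0<\veps<\veps_0}\xi_\veps(\delta)<\infty$ and invoking monotonicity finishes the argument.

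The main obstacle is uniformly controlling the square-root singularity at the turning point $\widetilde{N}_\veps(0)$, which drifts with $\veps$ and only approaches the unperturbed root $3\gamma/V$; the sharp identification $\widetilde{N}_\veps(0)=3\gamma/V+O(\veps)$ from Proposition \ref{cor2} is essential, as is the uniform nondegeneracy $|Q_\veps'(\widetilde{N}_\veps(0))|\geq c>0$ for small $\veps$. As an alternative closing step past $s_0$, one may invoke Lemma \ref{LemmaAux1}(2), which converts the regime $\widetilde{N}_\veps\leq\delta_0$ directly into an exponential-decay estimate $\widetilde{N}_\veps'\leq-(2\sqrt{\gamma}/3)\widetilde{N}_\veps$ via \eqref{EqAppen4}, with rate independent of $\veps$.
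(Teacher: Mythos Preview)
Your travel-time approach via the first integral is genuinely different from the paper's and is essentially sound. The paper never writes the time-to-reach-$\delta$ as an integral; instead it bounds $\widetilde{E}_\veps$ from below on the whole transit interval by combining three ingredients: (i) near $\xi=0$, the sharp peak estimate gives $\widetilde{E}_\veps'(0)>2\gamma^2$ and bounded $\widetilde{E}_\veps''$, hence $\widetilde{E}_\veps(\xi)>\gamma^2\xi$ on a uniform short interval; (ii) at the exit point $\xi_{\veps,\delta}$ where $\widetilde{N}_\veps=\delta$, Lemma~\ref{LemmaAux1}(2) gives $\widetilde{E}_\veps>\sqrt{\gamma}\delta$; (iii) unimodality of $\widetilde{E}_\veps$ on $[0,\infty)$ (read off from the phase portrait) then forces $\widetilde{E}_\veps>\sqrt{\gamma}\delta$ on the entire interval in between. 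Integrating $-\widetilde{N}_\veps'=\widetilde{E}_\veps/h\geq \tfrac{2}{3}\sqrt{\gamma}\delta$ bounds $\xi_{\veps,\delta}$. Your integral representation is more direct and makes the integrable square-root singularity at the turning point $\widetilde{N}_\veps(0)$ explicit; the paper's argument hides this inside the $\widetilde{E}_\veps'(0)$ estimate.

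There is, however, a quantifier slip in your bulk step. With only $|R_\veps(s)|\leq C\veps$, the inequality $Q_0+R_\veps\geq c(\delta,\delta_0)/2$ requires $\veps$ small depending on $\delta$, because $Q_0(s)\sim 2V\gamma s^2$ near $s=0$ so $c(\delta,\delta_0)\to0$ as $\delta\to0$; but the lemma asks for a single $\veps_0$ valid for all $\delta<\delta_1$. The cleanest fix is to sharpen the remainder to $R_\veps(s)=O(\veps s^2)$ (which falls straight out of \eqref{Q2Q3}), giving $Q_\veps(s)\geq V\gamma s^2$ uniformly on $[\delta,s_0]$ for one fixed $\veps_0$, and a bulk contribution of order $\log(1/\delta)$. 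Your ``alternative closing step'' using Lemma~\ref{LemmaAux1}(2) also repairs this, but note that it belongs to the small-$\widetilde{N}_\veps$ end (below the threshold $\delta_0$ of that lemma), not ``past $s_0$'' near the turning point: once the integral argument delivers $\widetilde{N}_\veps\leq\delta_0$ at a uniform time, the differential inequality $\widetilde{N}_\veps'\leq -\tfrac{2\sqrt{\gamma}}{3}\widetilde{N}_\veps$ takes you down to any smaller $\delta$ at a further time depending only on $\delta$.
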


\begin{proof}[Proof of Lemma \ref{LemmaAux1}]
It is obvious that \eqref{LowerboundofN} holds for all sufficiently small $\veps$ by the sharp estimate \eqref{unibd_nuphi}. Using \eqref{unibd_nuphi} again, we find from the ODE  \eqref{ODE_n_E2} that there is a function $\tilde{\mathfrak{g}}(\veps,n_\ast^\veps)$ such that
\begin{equation}\label{EqAppen9}
\begin{split}
\widetilde{E}_\veps'(0) 
& = \frac{1}{\veps^2}\left( n^\veps(0)-e^{H(n^\veps(0))}\right) \\
& = \frac{1}{\veps^2}\left( -2V\gamma\veps(n_\ast^\veps-1) + (1+\sigma)(n_\ast^\veps-1)^2 \right) + \veps\tilde{\mathfrak{g}},
\end{split}
\end{equation}
where $\tilde{\mathfrak{g}}$ is uniformly bounded by some constant $C>0$ for all $0<\veps<\veps_0$, and thus
\[
\begin{split}
\lim_{\veps \to 0}\widetilde{E}_\veps'(0) 
& =  3\gamma^2
\end{split}
\]
(See Appendix for \eqref{EqAppen9}). 
Hence, \eqref{2ndDerivBdPhiat0} holds for all sufficiently small $\veps$. 

Since $1<n^\veps(\xi)  \leq n_\ast^\veps$ for all $\xi \in \mathbb{R}$, there exists a constant $C>0$ such that 
\begin{equation}\label{EqAppend1}
\begin{split}
\sup_{\xi \in \mathbb{R}}\left| h(n^\veps) -1 \right| 
& \leq \sup_{\xi \in \mathbb{R}}\left|(V+\gamma\veps)^2-\sigma (n^\veps)^2-(n^\veps)^3 \right| \\
& \leq (V+\gamma\veps)^2-(\sigma+1) +  \sup_{\xi \in \mathbb{R}}\left| \sigma\left(1-(n^\veps)^2\right)+\left(1-( n^\veps)^3 \right) \right|  \\
& \leq \veps C
\end{split}
\end{equation}
for all $0<\veps<\veps_0$, where we have used $V=\sqrt{1+\sigma}$ and \eqref{unibd_nuphi} in the last line. It is clear that \eqref{EqAppend1} implies \eqref{EqAppen4} for all sufficiently small $\veps$. Now we choose $\veps_0>0$ (smaller than $\veps_0$ of Proposition \ref{cor2}) so small that \eqref{LowerboundofN}-- \eqref{EqAppen4} hold.

To obtain \eqref{1stDerivBdPhi}, we first apply Taylor's theorem to the right-hand side of the first integral \eqref{1st Int} around $n=1$ and then divide the resultant by $2^{-1}\veps^3$ to get
\begin{equation}\label{EqAppen1}
\begin{split}
(\widetilde{E}_\veps)^2 
& = \frac{g^{(2)}(1)}{\veps}(\widetilde{N}_\veps)^2  + \frac{g^{(3)}(n_b^\veps)}{3}(\widetilde{N}_\veps)^3 
\end{split}
\end{equation}
for some $n_b^\veps \in(1, n^\veps)$, where 
\begin{equation}\label{EqAppen3}
g^{(2)}(1) = \gamma\veps(2V+\gamma\veps)(1+2V\gamma\veps+\gamma^2\veps^2)
\end{equation}
(See Appendix for $g^{(2)}$ and $g^{(3)}$). Thanks to \eqref{unibd_nuphi} and the fact that $1<n^\veps(\xi)\leq n_*^\veps$ for all $\xi\in\mathbb{R}$, the RHS of \eqref{EqAppen1} is bounded by some constant $C>0$ (independent of $\veps$ and $\xi$) for all $0<\veps<\veps_0$ and $\xi\in\mathbb{R}$ as long as $1<n_b^\veps <n^\veps$. Hence, we have
\begin{equation}\label{EqAppen6}
\sup_{\xi\in\mathbb{R}}|\widetilde{E}_\veps(\xi)| \leq C.
\end{equation}
%{\color{blue} Can this be replaced by the fact that $n_*^\veps <\exists c<n_s$? so that  $|h(n^\veps)| \ge c>0$, where $c$ is independent of $\veps>0$.}
Dividing the ODE \eqref{ODE_n_E1} by $\veps h$, it follows from \eqref{EqAppen6} and \eqref{EqAppen4} that 
\begin{equation}\label{EqAppen5}
\sup_{\xi \in \mathbb{R}}|\widetilde{N}_\veps'|=\sup_{\xi \in \mathbb{R}}\frac{|\widetilde{E}_\veps|}{|h(n^\veps)|}  \leq  C.
\end{equation}
%Similarly, we obtain from \eqref{EqAppend1} and \eqref{EqAppen5} that
%\begin{equation}\label{EqAppen7}
%\begin{split}
%\sup_{\xi \in \mathbb{R}}\frac{1}{\veps}|\widetilde{N}_\veps' + \widetilde{E}_\veps| 
%& =\sup_{\xi \in \mathbb{R}} \frac{1}{\veps}\left( | 1-h(n^\veps)| |\widetilde{N}_\veps'| \right)  \leq C.
%\end{split}
%\end{equation}
Differentiating the ODE \eqref{ODE_n_E2} in $\xi$, we obtain by the mean value theorem that 
\begin{equation}\label{EqAppen8}
\begin{split}
\sup_{\xi \in \mathbb{R}} | \widetilde{E}_\veps'' | 
& =\sup_{\xi \in \mathbb{R}} \frac{1}{\veps} | \widetilde{N}_\veps'||1 - e^{H(n^\veps)}h(n^\veps)|  \\
& \leq \sup_{\xi \in \mathbb{R}} \frac{1}{\veps} | \widetilde{N}_\veps'| \left( C | n^\veps-1| + |1+\sigma-(V+\gamma\veps)^2| \right) \\
& \leq C,
\end{split}
\end{equation}
where we have used that $n^\veps>1$ in the second line, and $V=\sqrt{1+\sigma}$, \eqref{EqAppen5} and \eqref{unibd_nuphi} are used in the last line. Combining \eqref{EqAppen6}--\eqref{EqAppen8}, we obtain \eqref{1stDerivBdPhi}. 

From \eqref{EqAppen1}--\eqref{EqAppen3}, we get
\begin{equation}\label{EqAppen11}
\begin{split}
|\widetilde{E}_\veps|
 = \sqrt{\frac{g^{(2)}(1)}{\veps}}\widetilde{N}_\veps\sqrt{1 + \frac{\veps g^{(3)}(n_b^\veps)}{3g^{(2)}(1)}\widetilde{N}_\veps}  \geq  \sqrt{2\gamma}\widetilde{N}_\veps\sqrt{1 + \frac{\veps g^{(3)}(n_b^\veps)}{3g^{(2)}(1)}\widetilde{N}_\veps}
\end{split}
\end{equation}
since $V\geq 1$ for $\sigma \geq 0$ and $\widetilde{N}_\veps(\xi)>0$ for $\xi \in \mathbb{R}$. Now, by choosing $\delta_0>0$ so that 
\begin{equation}\label{EqAppen12}
\delta_0\sup_{\xi \in \mathbb{R}}\sup_{0<\veps<\veps_0}\left|\frac{\veps g^{(3)}(n_b^\veps)}{3g^{(2)}(1)} \right| <\frac{1}{2},
\end{equation}
\eqref{Compatibility} follows from \eqref{EqAppen11}. In \eqref{EqAppen12}, the supremum exists by \eqref{EqAppen3} and the estimate \eqref{unibd_nuphi}.
This completes the proof.
\end{proof}

\begin{proof}[Proof of Lemma \ref{LemmaAux3}]
From Theorem \ref{MainThm}, we recall that for each $\veps$, $\widetilde{N}_\veps(\xi)$ strictly decreases on $[0,\infty)$ and $\lim_{\xi \to \infty}\widetilde{N}_\veps(\xi)=0$.  This together with \eqref{LowerboundofN}, by the intermediate value theorem, implies that   for each $0<\veps<\veps_0$ and $0<\delta <  \gamma V^{-1}$,  there exists a unique $\xi_{\veps,\delta}>0$ such that 
\begin{equation}\label{Eq9Lem4}
\widetilde{N}_\veps(\xi_{\veps,\delta})=\delta.
\end{equation}
%This is true by the intermediate value theorem. 
By \eqref{Compatibility} and \eqref{Eq9Lem4}, we have that for all $0<\veps<\veps_0$ and $0 < \delta < \min\{\delta_0,\gamma V^{-1}\}$, 
\begin{equation}\label{Eq4Lem4}
\widetilde{E}_\veps (\xi_{\veps,\delta})>\sqrt{\gamma}\delta
\end{equation}
since $\widetilde{E}_\veps(\xi)\geq 0$ on $[0,\infty)$. On the other hand, by the mean value theorem, we get
\[
\widetilde{E}_\veps'(\xi) = \widetilde{E}_\veps'(0) - \widetilde{E}_\veps''(\xi^\veps_b) \xi
\]
for some $\xi^\veps_b\in (0,\xi)$. Thus, using \eqref{2ndDerivBdPhiat0} and \eqref{1stDerivBdPhi}, we find that there exists a sufficiently small $\xi_0>0$ (independent of $\veps$) such that for all $0<\veps<\veps_0$ and $0 < \xi <  \xi_0$, there holds
\begin{equation}\label{Eq1Lem4}
\widetilde{E}_\veps'(\xi) > \gamma^2.
\end{equation}
  Integrating \eqref{Eq1Lem4} over $[0,\xi]$ for $0<\xi < \xi_0$, we obtain 
\begin{equation}\label{Eq11Lem4}
\widetilde{E}_\veps(\xi) > \gamma^2\xi
\end{equation}
since $\widetilde{E}_\veps(0)=0$. Hence, for all $\delta>0$ satisfying $\gamma^{-\frac{3}{2}}\delta < \xi_0$, we have 
\begin{equation}\label{Eq3Lem4}
\widetilde{E}_\veps(\gamma^{-\frac{3}{2}}\delta) > \sqrt{\gamma}\delta.
\end{equation}

Let us assume for a moment that there is a positive number $\delta_1 \leq \min\{\delta_0,\gamma V^{-1},\gamma^{\frac{3}{2}}\xi_0\}$ such that for all $0<\delta < \delta_1$ and $0<\veps<\veps_0$, 
\begin{equation}\label{Eq2Lem4}
\gamma^{-\frac{3}{2}}\delta < \xi_{\veps,\delta}
\end{equation}
holds. Then \eqref{Eq4Lem4} and \eqref{Eq3Lem4}--\eqref{Eq2Lem4} imply that
\begin{equation}\label{Eq5Lem4}
\widetilde{E}_\veps(\xi)>\sqrt{\gamma}\delta
\end{equation}
for all $\xi\in[\gamma^{-\frac{3}{2}}\delta, \xi_{\veps,\delta}]$ and $0<\veps<\veps_0$ (See Figure \ref{FigTrajec}).
 Applying \eqref{EqAppen4} and \eqref{Eq5Lem4} to the ODE \eqref{ODE_n_E1}, we have 
\begin{equation}\label{Eq6Lem4}
\begin{split}
-\widetilde{N}_\veps'(\xi) 
& = \frac{\widetilde{E}_\veps(\xi)}{h(n_\veps)}  \geq  \frac{2}{3}\sqrt{\gamma}\delta
\end{split}
\end{equation}
for $\xi\in[\gamma^{-\frac{3}{2}}\delta, \xi_{\veps,\delta}]$ and $0<\veps<\veps_0$. Integrating \eqref{Eq6Lem4} from $\gamma^{-\frac{3}{2}}\delta$ to $\xi_{\veps,\delta}$, we obtain
\begin{equation}\label{Eq7Lem4}
\begin{split}
0<\widetilde{N}_\veps(\xi_{\veps,\delta})
% & \leq \widetilde{N}_\veps(\gamma^{-\frac{3}{2}}\delta) -\frac{2}{3}\sqrt{\gamma}\delta(\xi_{\veps,\delta}- \gamma^{-\frac{3}{2}}\delta) \\
& \leq - \frac{2}{3}\sqrt{\gamma}\delta(\xi_{\veps,\delta}- \gamma^{-\frac{3}{2}}\delta) + \sup_{0<\veps<\veps_0}\widetilde{N}_\veps(\gamma^{-\frac{3}{2}}\delta).
\end{split}
\end{equation}

Now it is clear that $\xi_{\delta}:=\sup_{0<\veps<\veps_0}\xi_{\veps,\delta}<\infty$ for each $0<\delta<\delta_1$. If not, for some sequence $\{\veps_k\}$, the right-hand side of \eqref{Eq7Lem4} tends to $-\infty$ as $k \to \infty$, and this is a contradiction. Therefore, we obtain that for each $0<\delta<\delta_1$,
\[
\delta =\widetilde{N}_\veps(\xi_{\veps,\delta}) \geq   \widetilde{N}_\veps(\xi_\delta) \geq \widetilde{N}_\veps(\xi)
\]
for all $0<\veps<\veps_0$ and $\xi \geq \xi_\delta$.

 To complete the proof, it is remained to verify \eqref{Eq2Lem4}. It suffices to show that there exists $\delta_1 \leq \min\{\delta_0,\gamma V^{-1}, \gamma^{\frac{3}{2}}\xi_0\}$ such that 
\begin{equation}\label{Eq10Lem4}
\gamma^{-\frac{3}{2}}\delta < \inf_{0<\veps<\veps_0}\xi_{\veps,\delta}
\end{equation}
for all $0<\delta<\delta_1$.
% It is trivial that $\inf_{0<\veps<\veps_2}\xi_{\veps,\delta}<\infty$ since $0<\xi_{\veps,\delta}<\infty$. 
It is obvious that the infimum exists and finite thanks to $0<\xi_{\veps,\delta}<\infty$.
 Suppose, to the contrary, that $\inf_{0<\veps<\veps_0}\xi_{\veps,\delta}=0$ for some $0<\delta \leq \gamma V^{-1}$.   
Then there is a sequence $\{\veps_k\}$ such that $\xi_{\veps_k,\delta} \to 0$ as $k \to \infty$. On the other hand, by the mean value theorem, \eqref{LowerboundofN} and \eqref{Eq9Lem4} imply that 
\begin{equation}\label{Eq8Lem4}
-\widetilde{N}_{\veps_k}'(\bar{\xi}_{\veps_k,\delta}) =\frac{\widetilde{N}_{\veps_k}(0)-\widetilde{N}_{\veps_k}(\xi_{\veps_k,\delta})}{\xi_{\veps_k,\delta}} \geq \frac{\frac{2\gamma}{V}-\delta}{\xi_{\veps_k,\delta}}>0
\end{equation}
for some $0<\bar{\xi}_{\veps_k,\delta}<\xi_{\veps_k,\delta}$. This is a contradiction since the right-hand side of \eqref{Eq8Lem4} tends to $+\infty$ as $k \to \infty$ while the left-hand side of \eqref{Eq8Lem4} stays bounded thanks to \eqref{1stDerivBdPhi}. Hence, we   have $\inf_{0<\veps<\veps_0}\xi_{\veps,\delta}>0$ for all $0<\delta \leq\gamma V^{-1}$. By the definition, $\xi_{\veps,\delta}$ is a decreasing function of $\delta$ for each fixed $\veps$, and this implies that 
\begin{equation}\label{222}
\inf_{0 < \veps<\veps_0}\xi_{\veps,\delta} \geq \inf_{0 < \veps<\veps_0}\xi_{\veps,\frac{\gamma}{V}}>0
\end{equation}
for all $0 < \delta \leq \gamma V^{-1}$. By letting $\delta_1:=\min\{\delta_0,\gamma V^{-1},\gamma^{\frac{3}{2}}\xi_0,\gamma^{\frac{3}{2}}\inf_{0<\veps<\veps_0}\xi_{\veps,\frac{\gamma}{V}}\}$, \eqref{222} implies that \eqref{Eq10Lem4} holds for all $0<\delta<\delta_1$.
This completes the proof.
\end{proof}
Now using the previous lemmas, we  show the uniform exponential decay of the solutions, which plays an important role in the analysis of remainders. 
\begin{proposition}\label{Prop1}
Let $k$ be any non-negative interger. Then there exist constants $C_\ast,\veps_1>0$ (independent of $k$), and $C_k>0$ such that 
\begin{subequations}\label{UnifExpDecayn}
\begin{align}[left = \empheqlbrace\,]
& |{\widetilde{N}_\veps}^{(k)}(\xi)|  +  |{\widetilde{E}_\veps}^{(k)}(\xi)| \leq C_k e^{-C_\ast \xi},  \label{UnifExpDecayn1} \\
& \frac{|{u^\veps}^{(k)}(\xi)|}{\veps}  +  \frac{|{\phi^\veps}^{(k)}(\xi)|}{\veps} \leq C_k e^{-C_\ast\xi} \label{UnifExpDecayn2}
\end{align}
\end{subequations}
for all $0<\veps < \veps_1$ and $\xi\geq 0$. Here $C_k$ and $C_\ast$ are independent of $\veps$ and $\xi$.
\end{proposition}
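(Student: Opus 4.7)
The plan is first to establish exponential decay for $\widetilde{N}_\veps$ and $\widetilde{E}_\veps$ on a suitable tail $[\xi_\delta,\infty)$, then extend the bound to all of $[0,\infty)$ by absorbing the constant coming from the compact piece $[0,\xi_\delta]$, and finally propagate the estimate to all higher derivatives by an induction on $k$ using the ODE system \eqref{ODE_n_E}.

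For the base case $k=0$, fix a small $\delta\in(0,\delta_1)$ to be chosen and use Lemma~\ref{LemmaAux3} to produce $\xi_\delta$, independent of $\veps$, such that $0<\widetilde{N}_\veps(\xi)\leq\delta$ on $[\xi_\delta,\infty)$ for all small $\veps$. On this tail, the first integral \eqref{1st Int}--\eqref{Def_g}, expanded around $n=1$ as in \eqref{EqAppen1}--\eqref{EqAppen3}, gives
\[
\widetilde{E}_\veps^2 \;=\; \frac{g^{(2)}(1)}{\veps}\widetilde{N}_\veps^2 + \frac{g^{(3)}(n_b^\veps)}{3}\widetilde{N}_\veps^3 \;=\; \bigl(2V\gamma + O(\veps)\bigr)\widetilde{N}_\veps^2 + O(\widetilde{N}_\veps^3).
\]
Choosing $\delta$ and $\veps_1$ small enough, this yields $\widetilde{E}_\veps\geq\sqrt{V\gamma}\,\widetilde{N}_\veps$ on $[\xi_\delta,\infty)$ (with $\widetilde{E}_\veps\geq 0$ by \eqref{mono}). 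Substituting into \eqref{ODE_n_E1} and using $h(n^\veps)\leq 3/2$ from \eqref{EqAppen4}, one obtains the linear differential inequality
\[
\widetilde{N}_\veps'(\xi)\;\leq\; -\tfrac{2}{3}\sqrt{V\gamma}\,\widetilde{N}_\veps(\xi) \qquad \text{on } [\xi_\delta,\infty),
\]
and a Gronwall comparison yields $\widetilde{N}_\veps(\xi)\leq\delta\,e^{-C_\ast(\xi-\xi_\delta)}$ with $C_\ast:=\tfrac{2}{3}\sqrt{V\gamma}$. On $[0,\xi_\delta]$, the uniform bound $\widetilde{N}_\veps(0)\leq C$ from \eqref{Aux4 Thm2} together with monotonicity extends the estimate to $\widetilde{N}_\veps(\xi)\leq C_0 e^{-C_\ast\xi}$ on all of $[0,\infty)$ (absorbing a factor $e^{C_\ast\xi_\delta}$ into $C_0$). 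The matching upper bound $\widetilde{E}_\veps\leq C\widetilde{N}_\veps$ on $[\xi_\delta,\infty)$, combined with the uniform bound \eqref{1stDerivBdPhi} on $[0,\xi_\delta]$, closes \eqref{UnifExpDecayn1} for $k=0$. The corresponding $k=0$ case of \eqref{UnifExpDecayn2} follows by Taylor-expanding the explicit relations \eqref{TravelEqB} around $n=1$, which yield $|u^\veps|/\veps\leq C\widetilde{N}_\veps$ and $|\phi^\veps|/\veps\leq C\widetilde{N}_\veps$.

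For $k\geq 1$ I induct. Writing $f(n):=n-e^{H(n)}$, the relation \eqref{ODE_n_E2} rephrases as
\[
\widetilde{E}_\veps' \;=\; \frac{f'(1)}{\veps}\,\widetilde{N}_\veps + \frac{f''(\tilde{n})}{2}\,\widetilde{N}_\veps^{\,2},
\]
and the factor $f'(1)/\veps=-(2V\gamma+\gamma^2\veps)$ remains uniformly bounded as $\veps\to 0$ thanks to the cancellation $f'(1)=(1+\sigma)-(V+\gamma\veps)^2=-\veps(2V\gamma+\gamma^2\veps)$ enforced by $V=\sqrt{1+\sigma}$. Similarly, differentiating \eqref{ODE_n_E1} gives $\widetilde{N}_\veps''=-[\widetilde{E}_\veps'+h'(n^\veps)(n^\veps)'\widetilde{N}_\veps']/h(n^\veps)$. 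Iterating these identities, every ${\widetilde{N}_\veps}^{(k)}$ and ${\widetilde{E}_\veps}^{(k)}$ is a polynomial in the lower-order derivatives $\widetilde{N}_\veps^{(j)}$, $\widetilde{E}_\veps^{(j)}$ ($j<k$) with coefficients that are smooth functions of $n^\veps$ and uniformly bounded in $(\veps,\xi)$. The inductive hypothesis then delivers $|{\widetilde{N}_\veps}^{(k)}|+|{\widetilde{E}_\veps}^{(k)}|\leq C_k e^{-C_\ast\xi}$. The analogous statement \eqref{UnifExpDecayn2} at order $k$ follows by differentiating the closed-form relations \eqref{TravelEqB} and writing $u^{\veps(k)}/\veps$ and $\phi^{\veps(k)}/\veps$ as bounded polynomial expressions in $\widetilde{N}_\veps^{(j)}$ for $j\leq k$.

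The main obstacle is securing the uniform-in-$\veps$ lower bound $\widetilde{E}_\veps\gtrsim\widetilde{N}_\veps$ on $[\xi_\delta,\infty)$ with a rate $C_\ast$ independent of $\veps$: both quantities degenerate as $\veps\to 0$, and the ODE \eqref{ODE_n_E} is singular in $\veps$. Two ingredients rescue the argument and must work in tandem: the $\veps$-cancellation $g^{(2)}(1)=O(\veps)$ enforced by the choice $V=\sqrt{1+\sigma}$ (keeping $g^{(2)}(1)/\veps$ of order one), and Lemma~\ref{LemmaAux3}, which supplies a tail $[\xi_\delta,\infty)$ where $\widetilde{N}_\veps\leq\delta$ \emph{independently of $\veps$} so that the cubic correction $O(\widetilde{N}_\veps^3)$ is absorbed by the quadratic principal part and the rate $C_\ast$ does not collapse as $\veps\to 0$.
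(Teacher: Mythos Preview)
Your proof is correct and reaches the same conclusion, but the route to the $k=0$ estimate differs from the paper's. The paper linearizes the system \eqref{ODE_n_E} at $(1,0)$, diagonalizes the Jacobian $A$, and shows that the positive eigenvalue $\lambda(\veps)$ is monotone in $\veps$ so that $-\lambda(\veps)\leq -\lambda(\veps_\ast)$ uniformly; it then writes a differential inequality for the stable combination $\widetilde{N}_\veps + [(J^2-\sigma)\lambda]^{-1}\widetilde{E}_\veps$ and uses Lemma~\ref{LemmaAux3} to absorb the quadratic remainders on $[\xi_\delta,\infty)$. You instead exploit the first integral \eqref{1st Int} directly: the Taylor expansion \eqref{EqAppen1}--\eqref{EqAppen3} gives $\widetilde{E}_\veps\gtrsim\sqrt{V\gamma}\,\widetilde{N}_\veps$ on the tail (this is essentially Lemma~\ref{LemmaAux1}(2), which you could have cited), and substituting into \eqref{ODE_n_E1} collapses the problem to a scalar inequality $\widetilde{N}_\veps'\leq -C_\ast\widetilde{N}_\veps$. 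Your argument is more elementary---no diagonalization, no monotonicity analysis of $\lambda(\veps)$---and leans on the conserved quantity that the paper uses only indirectly; the paper's approach is the standard stable-manifold estimate and would transfer to systems lacking a first integral. Both arguments hinge on the same two ingredients you correctly isolate: the cancellation $g^{(2)}(1)=O(\veps)$ forced by $V=\sqrt{1+\sigma}$, and Lemma~\ref{LemmaAux3} to enter the small-$\widetilde{N}_\veps$ regime at an $\veps$-independent time. For $k\geq 1$ the two inductions are the same in substance; your observation that $f'(1)/\veps$ stays bounded is exactly the mechanism the paper records in \eqref{EqAppen8} and \eqref{Ap44}.
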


\begin{proof}
%[Proof of Proposition \ref{Prop1}]
By Taylor's theorem, the ODE system \eqref{ODE_n_E} can be written as
\begin{equation}\label{ExpandODE}
\begin{pmatrix}
n^\veps-1\\
E^\veps
\end{pmatrix}'
= A\begin{pmatrix}
n^\veps-1 \\
E^\veps
\end{pmatrix} 
+
\begin{pmatrix}
\mathcal{R}_1 \\
\mathcal{R}_2
\end{pmatrix},
\end{equation}
where
\begin{equation}
A:=\begin{pmatrix}
0 & (\sigma-J^2)^{-1} \\
\veps^{-1}(1 +\sigma - J^2) & 0
\end{pmatrix}
\end{equation}
is the Jacobian matrix of \eqref{ODE_n_E} at $(n^\veps,E^\veps)=(1,0)$ (See \eqref{jacobian}). Here, $\mathcal{R}_1$ and $\mathcal{R}_2$ are functions of $(n^\veps,E^\veps)$, and there is a constant $C>0$ such that 
\begin{subequations}\label{R1R2estimat}
\begin{align}[left = \empheqlbrace\,]
& |\mathcal{R}_1| \leq C \left( (n^\veps-1)^2 + (n^\veps-1)E^\veps \right),  \label{R1R2estimate1} \\
& |\mathcal{R}_2| \leq \veps^{-1}C(n^\veps-1)^2\label{R1R2estimate2}
\end{align}
\end{subequations}
for all $0 < \veps < \veps_0$ by the estimate \eqref{unibd_nuphi} (See Appendix). The eigenvalues of $A$ are 
\begin{equation}\label{EigenValOfA}
\pm\frac{1}{\sqrt{\veps}}\sqrt{\frac{J^2-1-\sigma}{J^2-\sigma}} 
= \pm \sqrt{\frac{ 2V\gamma + \gamma^2\veps}{1+2V\gamma\veps + (\gamma\veps)^2 }},
\end{equation}
where we have used the definition \eqref{J} of $J$. Let $\lambda=\lambda(\veps)$ be the positive eigenvalue of $A$. By \eqref{EigenValOfA}, one can choose  $\veps_\ast=\veps_\ast(\sigma,\gamma)>0$ sufficiently small so that 
\[
\frac{d\lambda}{d\veps}(\veps)= \frac{1}{2\lambda(\veps)} \frac{\gamma^2(1-4V^2)+O(\veps)}{(1+2V\gamma\veps+\gamma^2\veps^2)^2} < 0
\]
for all $0<\veps<\veps_\ast$ since $V=\sqrt{\sigma +1}$. % so that the right-hand side is negative for small $\veps>0$. 
Thus $\lambda(\veps)$ is a decreasing function of $\veps$ on $(0,\veps_\ast)$. We fix such $\veps_\ast$.  On the other hand, one can check from \eqref{EigenValOfA} that $(J^2(\veps) -\sigma)\lambda(\veps)$ is an increasing function of $\veps$. Thus we have for all $0<\veps<\veps_\ast$,
\begin{subequations}\label{BdEigenValue}
\begin{align}[left = \empheqlbrace\,]
& -\lambda < -\lambda(\veps_\ast),  \label{BdEigenValue1} \\
& \sqrt{2V\gamma} < (J^2-\sigma)\lambda  < (J^2(\veps_\ast)-\sigma)\lambda(\veps_\ast).\label{BdEigenvalue2}
\end{align}
\end{subequations}

By diagonalization, we can rewrite \eqref{ExpandODE} as
\begin{equation}\label{ODEDiago}
P\begin{pmatrix}
n^\veps-1 \\
E^\veps
\end{pmatrix}'
= \begin{pmatrix}
\lambda & 0 \\
0 & -\lambda
\end{pmatrix}
P\begin{pmatrix}
n^\veps-1 \\
E^\veps
\end{pmatrix} + P\begin{pmatrix}
\mathcal{R}_1 \\
\mathcal{R}_2
\end{pmatrix},
\end{equation}
where 
\[
P :=\frac{1}{2} \begin{pmatrix}
1 & -[(J^2-\sigma)\lambda]^{-1} \\
1 & [(J^2-\sigma)\lambda]^{-1}
\end{pmatrix}.
\]
Multiplying the second component of \eqref{ODEDiago} by $2\veps^{-1}$, and then using \eqref{R1R2estimat}, we get
\begin{equation}\label{RemainderExpand2}
\begin{split}
\widetilde{N}_\veps' + [(J^2-\sigma)\lambda]^{-1}\widetilde{E}_\veps' 
& \leq  -\lambda\underbrace{\left( \widetilde{N}_\veps + [(J^2-\sigma)\lambda]^{-1}\widetilde{E}_\veps \right)}_{=:I_1} \\
& \quad + \veps C \widetilde{N}_\veps^2 + \veps C \widetilde{N}_\veps \widetilde{E}_\veps +  C[(J^2-\sigma)\lambda]^{-1} \widetilde{N}_\veps^2.
 \end{split}
\end{equation}
Since $\widetilde{N}_\veps$ and $\widetilde{E}_\veps$ are non-negative for $\xi \in [0,\infty)$, we obtain from \eqref{BdEigenValue} that
\begin{equation}\label{I1}
\begin{split}
-\lambda \, I_1 
& \leq -\lambda(\veps_\ast)I_1 \\
& \leq -\frac{\lambda(\veps_\ast)}{2}I_1 \underbrace{- \frac{\lambda(\veps_\ast)}{2}\left( \widetilde{N}_\veps + \left[(J^2(\veps_\ast)-\sigma)\lambda(\veps_\ast) \right]^{-1}\widetilde{E}_\veps \right)}_{=:I_2}
\end{split}
\end{equation}
By Lemma \ref{LemmaAux3} and \eqref{BdEigenvalue2}, one can choose a sufficiently small $\delta \in(0,\delta_1)$ so that
\begin{equation}\label{I2}
I_2 +  \veps C \widetilde{N}_\veps^2 + \veps C \widetilde{N}_\veps \widetilde{E}_\veps +C[(J^2-\sigma)\lambda]^{-1} \widetilde{N}_\veps^2 < \frac{1}{2} I_2 <0
\end{equation}
holds for all $0 < \veps < \veps_0$ and $\xi\geq \xi_\delta$.
Let $\veps_1:=\min\{\veps_{\ast},\veps_0\}$. Combining \eqref{RemainderExpand2}--\eqref{I2},
 %and then discarding $\frac{1}{2}I_2<0$,
  we get
\begin{equation}\label{RemainderExpand3}
\begin{split}
\widetilde{N}_\veps' + [(J^2-\sigma)\lambda]^{-1}\widetilde{E}_\veps' 
& < -\frac{\lambda(\veps_\ast)}{2}\left( \widetilde{N}_\veps + [(J^2-\sigma)\lambda]^{-1}\widetilde{E}_\veps \right).
\end{split}
\end{equation}
We multiply \eqref{RemainderExpand3} by $e^{2^{-1}\lambda(\veps_\ast)\xi}$ and then integrate it over $[\xi_\delta, \xi]$ to obtain
\begin{equation}\label{RemainderExpand4}
\begin{split}
\widetilde{N}_\veps(\xi) + \frac{\widetilde{E}_\veps(\xi)}{(J^2-\sigma)\lambda}
&  \leq \left[ \widetilde{N}_\veps(\xi_\delta) + \frac{\widetilde{E}_\veps(\xi_\delta)}{(J^2-\sigma)\lambda} \right]e^{-\frac{\lambda(\veps_\ast)}{2}(\xi-\xi_\delta)}.
\end{split}
\end{equation}
Let $C_\ast:=2^{-1}\lambda(\veps_\ast)$. Now \eqref{UnifExpDecayn1} for $k=0$ immediately follows from \eqref{RemainderExpand4} by applying the bounds \eqref{1stDerivBdPhi}, \eqref{BdEigenvalue2}, and the estimate \eqref{unibd_nuphi}. \eqref{UnifExpDecayn2} is obtained from \eqref{UnifExpDecayn1} and \eqref{TravelEqB}. This completes the proof for $k=0$. By induction, we obtain the cases $k\geq  1$ using the system \eqref{ODE_n_E} (See \eqref{Ap44} for instance). We omit the details.
\end{proof}
 
\subsection{The remainder equations - Proof of Theorem \ref{MainThm3}}
We derive the remainder equations for $(n_R^\veps,u_R^\veps,\phi_R^\veps)$:
\begin{subequations}\label{Eq for difference}
\begin{align}[left = \empheqlbrace\qquad]
& u_R^\veps-Vn_R^\veps = \mathcal{M}_1^\veps, \label{Mas2a} \\ 
& \phi_R^\veps-Vu_R^\veps+\sigma n_R^\veps = \mathcal{M}_2^\veps, \label{Mom2a} \\
& \phi_R^\veps-n_R^\veps= V\mathcal{M}_1^\veps + \mathcal{M}_2^\veps, \label{VxMas2a+Mom2a}
\end{align}
\end{subequations}
and
\begin{equation}\label{MainEq}
{\phi_{R}^\veps}'' - F_\veps\phi_R^\veps = \mathcal{M}_3^\veps,
\end{equation}
where 
\begin{equation}\label{F}
F_\veps(\xi) :=   2V\gamma  - 2V^2 n_{\text{KdV}} - V^2\frac{\phi_R^\veps}{\veps}
\end{equation}
and $\mathcal{M}_i^\veps$ $(i=1,2,3)$, defined in \eqref{Mas1}--\eqref{Mom1} and \eqref{M3Def}, are some functions   of $n_{\text{KdV}}, n^\veps_{R}, u^\veps_{R}$, and $\phi^\veps_{R}$. For notational simplicity, we let $n_{\text{KdV}}=n_{K}$. Since $n_{K}(\xi) = n_K(x-\gamma t)$ satisfies the associated  KdV equation \eqref{KdV}, it also satisfies 
%by the chain rule
\begin{equation}\label{KdVinXi}
-\gamma n_{K}' + V n_{K}n_{K}' + (2V)^{-1}n_{K}''' = 0.
\end{equation} 

Putting \eqref{Def nR uR phiR} into \eqref{TravelEqA1}, a direct calculation yields \eqref{Mas2a}, where
\begin{equation}\label{Mas1}
\begin{split}
\mathcal{M}_1^\veps
& :=  (\gamma \veps - \veps Vn_{K}) n_R^{\veps} - (\veps n_{K} + n_R^{\veps})u_R^{\veps}   + \veps^2(\gamma n_{K} - Vn_{K}^2) \\
& \quad + \veps\underbrace{ ( V n_{K} -  Vn_{K} ) }_{=0}.
\end{split}
\end{equation}
Similarly, we obtain \eqref{Mom2a} from \eqref{Def nR uR phiR} and \eqref{TravelEqA2}, where
\begin{equation}\label{Mom1} 
\begin{split}
\mathcal{M}_2^\veps
& :=  (\gamma \veps  -  \veps Vn_K)u_R^{\veps} - \frac{|u_{R}^{\veps}|^2}{2}  + \frac{\sigma}{2}\left( 2\veps n_K n_R^{\veps} + |n_R^{\veps}|^2\right) -\sigma O_{n^{\veps}}(\veps^3) \\
& \quad  + \veps^2\left(\gamma Vn_K - \frac{V^2n_K^2}{2} + \frac{\sigma n_K^2}{2}\right)  - \veps\underbrace{(n_K-V^2n_K+\sigma n_K)}_{=0}
\end{split}
\end{equation}
and $O_{n^{\veps}}(\veps^3) := \ln n^{\veps} - (n^{\veps}-1)+\frac{1}{2}(n^{\veps}-1)^2$.
% It is trivial that
On the other hand,   \eqref{VxMas2a+Mom2a} follows from \eqref{Mas2a}--\eqref{Mom2a} since $V=\sqrt{1+\sigma}$. 

Now we derive \eqref{MainEq}-\eqref{F}. Plugging \eqref{Def nR uR phiR} into \eqref{TravelEq3}, we obtain
\begin{equation}\label{Poi1}
\begin{split}
n_{R}^{\veps} - \phi_R^{\veps}
& = -\veps^2n_K'' - \veps {\phi_{R}^{\veps}}''   +\frac{1}{2}(\veps n_K + \phi_{R}^{\veps})^2 + O_{\phi^\veps}(\veps^3) ,
\end{split}
\end{equation}
where $O_{\phi^\veps}(\veps^3) :=  e^{\phi^\veps} - 1 - \phi^\veps - \frac{1}{2}(\phi^\veps)^2$. 
By adding \eqref{Mom2a} and \eqref{Poi1}, the term $\phi_{R}^{\veps}$ in the left-hand side of \eqref{Poi1} is canceled, and we have
\begin{equation}\label{Mom+Poi1}
\begin{split}
-Vu_R^{\veps} + (1+\sigma)n_R^{\veps} 
& = \veps^2\left( -n_K'' + \frac{n_K^2}{2} \right) - \veps {\phi_{R}^{\veps}}''  + \frac{1}{2}\left( 2\veps n_K\phi_R^{\veps} + |\phi_R^{\veps}|^2 \right)\\
& \quad  + O_{\phi^\veps}(\veps^3) + \mathcal{M}_2^\veps.
\end{split}
\end{equation}
Multiplying \eqref{Mas2a} by $V$, and then adding the resulting equation to \eqref{Mom+Poi1}, the left-hand side of \eqref{Mom+Poi1} is canceled since $V=\sqrt{1+\sigma}$. Hence we get
\begin{equation}\label{R1}
\begin{split}
0 & = V\mathcal{M}_1^\veps + \{\text{the RHS of \eqref{Mom+Poi1}} \}\\
& = V\left[(\gamma\veps -\veps Vn_K) n_{R}^{\veps} - (\veps n_K + n_{R}^{\veps})u_{R}^{\veps} \right]  - \veps{\phi_R^{\veps}}'' + \frac{1}{2}\left( 2\veps n_K\phi_R^{\veps} + |\phi_R^{\veps}|^2 \right)  \\
& \quad +(\gamma\veps  - \veps Vn_K )u_R^{\veps} - \frac{|u_R^{\veps}|^2}{2} + \frac{\sigma}{2}\left(2\veps n_K n_R^{\veps} + |n_R^{\veps}|^2  \right)  -\sigma O_{n^\veps}(\veps^3)  \\
& \quad + O_{\phi^\veps}(\veps^3) + \veps^2 \left( \underline{ 2V\gamma n_K - V^2n_K^2  -\frac{V^2n_K^2}{2} + \frac{1+\sigma}{2}n_K^2 -n_K'' } \right).
\end{split}
\end{equation}
Since $V=\sqrt{1+\sigma}$ and $n_K$ satisfies \eqref{KdVinXi}, the underlined part of \eqref{R1} is zero. Using \eqref{Mas2a}, we substitute $u_R^{\veps}$ of \eqref{R1} with $Vn_R^{\veps}+\mathcal{M}_1^\veps$. Then it is straightforward to obtain
\begin{equation}\label{R2}
\begin{split}
& \veps{\phi_R^{\veps}}''+ \left(\veps\widetilde{F} + \frac{1}{2}(3V^2-\sigma)n_R^\veps \right) n_R^{\veps} - \frac{1}{2}\left( 2\veps n_K + \phi_R^{\veps} \right)\phi_R^{\veps} =   \widetilde{\mathcal{M}}_3^\veps,
\end{split}
\end{equation}
where
\begin{subequations}
\begin{align}[left = \empheqlbrace\,]
 \widetilde{F}(\xi)
& :=-2V\gamma +(3V^2-\sigma) n_K , \label{F3} \\
 \begin{split}
\widetilde{\mathcal{M}}_3^\veps
& :=  \mathcal{M}_1^\veps \left( \gamma\veps-2V\veps n_K-2Vn_R^\veps-\frac{1}{2}\mathcal{M}_1^\veps \right) -\sigma O_{n^\veps}(\veps^3) + O_{\phi^\veps}(\veps^3).
\end{split}
 \label{M3}
\end{align}
\end{subequations}
Using \eqref{VxMas2a+Mom2a}, we substitute $n_R^{\veps}$ of the left-hand side of \eqref{R2} by $\phi_R^{\veps} + V\mathcal{M}_1^\veps + \mathcal{M}_2^\veps$ and then divide the resultant by $\veps$.  Then we get
\begin{equation}\label{R3}
{\phi_R^{\veps}}''  -\left[2V\gamma - (3V^2-\sigma-1)n_K - \frac{1}{2}(3V^2-\sigma-1)\frac{\phi_R^\veps}{\veps}\right]\phi_R^\veps = \mathcal{M}_3^\veps,
\end{equation}
where 
\begin{equation}\label{M3Def}
\begin{split} 
 \mathcal{M}_3^\veps
&:=   \frac{\widetilde{\mathcal{M}}_3^\veps}{\veps} - \left(V\mathcal{M}_1^\veps + \mathcal{M}_2^\veps\right) \left[ \widetilde{F}  + \frac{(3V^2-\sigma)}{2\veps}\left[2\phi_R^\veps + \left(V\mathcal{M}_1^\veps + \mathcal{M}_2^\veps\right) \right] \right].
\end{split}
\end{equation}
Substituting $V=\sqrt{1+\sigma}$ into \eqref{R3}, we finally arrive at  \eqref{MainEq}--\eqref{F}.

The following lemma is a direct consequence of  Proposition~\ref{cor2}, Proposition~\ref{Prop1}, and the definitions of $n_{\text{KdV}},n_R^\veps,u_R^\veps,\phi_R^\veps,F_\veps$, and $\mathcal{M}_i^\veps$ $(i=1,2,3)$. We omit the proof.
% We omit the proof.
\begin{lemma}\label{Lemma6} 
Let $k$ be any non-negative integer. Then there exist constants $\widetilde{C}_\ast$, $\veps_1>0$ (independent of $k$), and $C_k$, $\xi_1>0$ such that for all $0<\veps<\veps_1$,
%we let 
%\begin{equation}\label{Def nR uR phiR}
%n_{R}^\veps(\xi) := n^\veps(\xi) - 1 - \veps n_{\text{KdV}}(\xi), \quad u_{R}^\veps(\xi) := u^\veps(\xi) - \veps Vn_{\text{KdV}}(\xi), \quad \phi_{R}^\veps(\xi) := \phi^\veps(\xi) - \veps n_{\text{KdV}}(\xi),
%\end{equation}
%where $n_{\text{KdV}}(\xi)$ defined in \eqref{solutionKdV}. 
\begin{subequations}
\begin{align}[left = \empheqlbrace\qquad]
& |{n_R^\veps}^{(k)}|, \; |{u_R^\veps}^{(k)}|, \; |{\phi_R^\veps}^{(k)}| \leq C_k\veps e^{-\widetilde{C}_\ast\xi}, \quad (\xi \geq 0),\label{Order nR uR phiR} \\
& |{M_i^\veps}^{(k)}| \leq C_k\veps^2 e^{-\widetilde{C}_\ast\xi}, \quad (\xi \geq 0, \quad  i=1,2,3),\label{Decay M} \\
& \sup_{\xi \in [0,\infty)}|{F_\veps}^{(k)}| \leq C_k, \label{Bd F} \\
& F_\veps(\xi)>V\gamma, \quad (\xi \geq \xi_1).  \label{LowerBd F} 
\end{align}
\end{subequations}
Here $C_k$ and $\widetilde{C}_\ast$ are independent of $\veps$ and $\xi$, and $\xi_1$ is independent of $\veps$.
\end{lemma}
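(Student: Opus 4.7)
My plan to prove Lemma~\ref{Lemma6} is to extract all four estimates from the uniform exponential decay established in Proposition~\ref{Prop1}, combined with the explicit decay of $n_{\text{KdV}}(\xi) = (3\gamma/V)\,\text{sech}^2(\sqrt{V\gamma/2}\,\xi)$ and a careful bookkeeping of $\veps$-orders in the nonlinearities $\mathcal{M}_i^\veps$. Throughout, I would fix $\widetilde{C}_\ast := \tfrac{1}{2}\min\{C_\ast,\sqrt{V\gamma/2}\}$, where $C_\ast$ is the rate furnished by Proposition~\ref{Prop1}.

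For \eqref{Order nR uR phiR}, I would rewrite each remainder as $n_R^\veps = \veps(\widetilde{N}_\veps - n_{\text{KdV}})$, $u_R^\veps = \veps(u^\veps/\veps - V n_{\text{KdV}})$, and $\phi_R^\veps = \veps(\phi^\veps/\veps - n_{\text{KdV}})$. Since each of $\widetilde{N}_\veps$, $u^\veps/\veps$, $\phi^\veps/\veps$ is $O(e^{-C_\ast \xi})$ uniformly in $\veps$ by Proposition~\ref{Prop1}, and $n_{\text{KdV}}$ decays at rate $\sqrt{2V\gamma}$ explicitly, the case $k=0$ is immediate. For $k\geq 1$, I would differentiate and invoke the higher-order statements of Proposition~\ref{Prop1} together with $|n_{\text{KdV}}^{(k)}(\xi)|\leq C_k e^{-\sqrt{2V\gamma}\,\xi}$.

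For \eqref{Decay M}, I would read off the algebraic structure of \eqref{Mas1}, \eqref{Mom1}, and \eqref{M3Def}. In $\mathcal{M}_1^\veps$ and $\mathcal{M}_2^\veps$ every monomial is a product of two factors each of size $O(\veps\, e^{-\widetilde{C}_\ast\xi})$, which yields the desired $O(\veps^2 e^{-\widetilde{C}_\ast\xi})$ bound; the transcendental remainder $O_{n^\veps}(\veps^3) = \ln n^\veps - (n^\veps-1) + \tfrac{1}{2}(n^\veps-1)^2$ is controlled by Taylor's theorem as $O((n^\veps-1)^3)$, and $O_{\phi^\veps}(\veps^3)$ is treated identically. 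The real subtlety is $\mathcal{M}_3^\veps$: because \eqref{M3Def} divides $\widetilde{\mathcal{M}}_3^\veps$ by $\veps$, I must verify that $\widetilde{\mathcal{M}}_3^\veps$ is genuinely of size $O(\veps^3 e^{-\widetilde{C}_\ast\xi})$. This holds because in \eqref{M3} the first summand is $\mathcal{M}_1^\veps\cdot O(\veps) = O(\veps^3)$ and the remaining two are the cubic Taylor remainders just discussed. The second summand on the right of \eqref{M3Def} is $O(\veps^2)$ via $V\mathcal{M}_1^\veps + \mathcal{M}_2^\veps = O(\veps^2)$ together with $\widetilde{F}=O(1)$ and $\phi_R^\veps/\veps = O(\veps)$. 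Higher-order derivatives follow by the Leibniz rule applied to the same decompositions, invoking the higher-derivative bounds from Step~1 and Proposition~\ref{Prop1}.

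For \eqref{Bd F}--\eqref{LowerBd F}, I would use $F_\veps = 2V\gamma - 2V^2 n_{\text{KdV}} - V^2 \phi_R^\veps/\veps$ and note that $\phi_R^\veps/\veps = \phi^\veps/\veps - n_{\text{KdV}}$ is bounded, together with all its derivatives, uniformly in $\veps$, giving \eqref{Bd F} at once. For \eqref{LowerBd F}, both $n_{\text{KdV}}(\xi)$ and $\phi_R^\veps(\xi)/\veps$ decay to zero exponentially at rates independent of $\veps$, so one can choose $\xi_1$ so large that $2V^2 n_{\text{KdV}}(\xi) + V^2|\phi_R^\veps(\xi)|/\veps < V\gamma$ for all $\xi \geq \xi_1$, which yields $F_\veps(\xi) > V\gamma$ uniformly in $\veps$. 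The main (and essentially the only) obstacle is the $\veps$-bookkeeping for $\mathcal{M}_3^\veps$: the apparent $\veps^{-1}$ factor in \eqref{M3Def} must be compensated by showing that every component of $\widetilde{\mathcal{M}}_3^\veps$ vanishes to cubic order in $\veps$, which rests on the Taylor remainders on $\ln n^\veps$ and $e^{\phi^\veps}$ being truly $O(\veps^3)$ rather than merely $O(\veps^2)$; everything else in the lemma is then a direct quotation of Proposition~\ref{Prop1} together with the explicit form of $n_{\text{KdV}}$.
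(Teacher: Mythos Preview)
Your proposal is correct and matches the paper's own (omitted) argument, which simply declares the lemma a direct consequence of Proposition~\ref{cor2}, Proposition~\ref{Prop1}, and the explicit definitions. One minor slip: at this stage you only know $\phi_R^\veps/\veps = O(1)$ from \eqref{Order nR uR phiR}, not $O(\veps)$ (that is precisely what Theorem~\ref{MainThm3} will eventually establish), but since $\widetilde{F}=O(1)$ already dominates the bracket in \eqref{M3Def}, your bound on the second summand of $\mathcal{M}_3^\veps$ goes through unchanged with $O(1)$ in place of $O(\veps)$.
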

Now we shall show, by a continuation argument, that the remainders near the peak decay in $\veps$ at the same order as the peak does. 
\begin{proposition}\label{Prop2}
For any fixed $\xi_\ast>0$, there exists a constant $C_{\xi_\ast}>0$ such that
\begin{equation}\label{EqProp2}
\begin{split}
\sup_{\xi\in[0,\xi_\ast]} \left( |{\phi_R^\veps}' (\xi) |^2 + | \phi_R^\veps(\xi)|^2 \right) \leq C_{\xi_\ast}\veps^4
\end{split}
\end{equation}
holds for all $0< \veps <\veps_1$. Here $C_{\xi_\ast}$ is independent of $\veps$ but depends on $\xi_\ast$.
\end{proposition}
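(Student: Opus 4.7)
My plan is to perform a standard energy estimate for the linear second-order ODE \eqref{MainEq} satisfied by $\phi_R^\veps$, combined with the sharp peak estimate \eqref{unibd_nuphi} to control the initial data at $\xi=0$.

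First I would establish that the initial data are already of order $\veps^2$. By the symmetry property \eqref{Symmetric}, ${\phi^\veps}'(0)=0$, and since $n_{\text{KdV}}(\xi)=\frac{3\gamma}{V}\textnormal{sech}^2(\sqrt{V\gamma/2}\,\xi)$ is even, $n_{\text{KdV}}'(0)=0$; hence ${\phi_R^\veps}'(0)=0$ exactly. By Proposition~\ref{cor2},
\[
|\phi_R^\veps(0)|=|\phi_\ast^\veps-3\gamma V^{-1}\veps|\leq C\veps^2.
\]
Setting
\[
\mathcal{E}(\xi):=\tfrac{1}{2}|{\phi_R^\veps}'(\xi)|^2+\tfrac{1}{2}|\phi_R^\veps(\xi)|^2,
\]
this gives $\mathcal{E}(0)\leq C\veps^4$.

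Next I would multiply \eqref{MainEq} by ${\phi_R^\veps}'$ and add the trivial identity $\phi_R^\veps{\phi_R^\veps}'=\tfrac{1}{2}\tfrac{d}{d\xi}|\phi_R^\veps|^2$ to obtain
\[
\mathcal{E}'(\xi)=(F_\veps(\xi)+1)\,\phi_R^\veps(\xi)\,{\phi_R^\veps}'(\xi)+\mathcal{M}_3^\veps(\xi)\,{\phi_R^\veps}'(\xi).
\]
By Lemma~\ref{Lemma6}, $|F_\veps|$ is uniformly bounded on $[0,\infty)$ by a constant independent of $\veps$, and $|\mathcal{M}_3^\veps(\xi)|\leq C\veps^2$ for all $0<\veps<\veps_1$. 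Applying Young's inequality to both products on the right-hand side yields a differential inequality of Gronwall type,
\[
\mathcal{E}'(\xi)\leq C\mathcal{E}(\xi)+C\veps^4,
\]
for all $\xi\in[0,\xi_\ast]$, with $C$ independent of $\veps$. Gronwall's inequality on the compact interval $[0,\xi_\ast]$ then gives
\[
\mathcal{E}(\xi)\leq e^{C\xi_\ast}\bigl(\mathcal{E}(0)+C\xi_\ast\veps^4\bigr)\leq C_{\xi_\ast}\veps^4,
\]
which is exactly \eqref{EqProp2}.

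The substantive step is really the initial bound $\mathcal{E}(0)=O(\veps^4)$: it rests on the exact evenness of $\phi_R^\veps$ about $\xi=0$ together with the \emph{sharp} $O(\veps^2)$ correction in \eqref{unibd_nuphi}. A naive expansion $\phi_\ast^\veps=\veps n_{\text{KdV}}(0)+o(\veps)$ would only give $\mathcal{E}(0)=o(\veps^2)$ and the resulting bound on $[0,\xi_\ast]$ would be insufficient to close the argument later. The remaining ingredients — the uniform boundedness of $F_\veps$ on $[0,\infty)$ and the $O(\veps^2)$ bound on $\mathcal{M}_3^\veps$ — are provided by Lemma~\ref{Lemma6}, and the Gronwall step itself is routine because the interval $[0,\xi_\ast]$ is compact (so no decay of $F_\veps$ or $\mathcal{M}_3^\veps$ is needed here; that will matter for the far-field estimate in Proposition~\ref{Prop3}, not here).
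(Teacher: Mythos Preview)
Your proposal is correct and follows essentially the same approach as the paper: multiply \eqref{MainEq} by ${\phi_R^\veps}'$, form the energy $|{\phi_R^\veps}'|^2+|\phi_R^\veps|^2$, use Lemma~\ref{Lemma6} for the uniform bounds on $F_\veps$ and the $O(\veps^2)$ bound on $\mathcal{M}_3^\veps$, apply Young's inequality, and close with Gronwall together with the sharp initial bound from \eqref{unibd_nuphi} and ${\phi_R^\veps}'(0)=0$. The only cosmetic differences are that the paper presents the initial-data step at the end rather than the beginning and carries out the Gronwall step by multiplying through by $e^{-C\xi}$ explicitly.
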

\begin{proof}
Multiplying \eqref{MainEq} by $2{\phi_R^\veps}'$, and then adding $(|\phi_R^\veps|^2)' = 2\phi_R^\veps{\phi_R^\veps}'$ to the resulting equation, we have
\begin{equation}\label{R4}
\left( |{\phi_R^\veps}'|^2 + |\phi_R^\veps|^2 \right)' =  2(1+F_\veps) \phi_R^\veps{\phi_R^\veps}' + 2\mathcal{M}_3^\veps{\phi_R^\veps}'.
\end{equation}
Applying Young's inequality to \eqref{R4}, there exists a constant $C>0$ such that 
\begin{equation}\label{R5}
\begin{split}
\left( |{\phi_R^\veps}'|^2 + |\phi_R^\veps|^2 \right)' 
& \leq   |1+F_\veps|\left(|\phi_R^\veps|^2 +|{\phi_R^\veps}'|^2 \right) +  |\mathcal{M}_3^\veps|^2 +  |{\phi_R^\veps}'|^2 \\
& \leq C\left( |{\phi_R^\veps}'|^2 + |\phi_R^\veps|^2 \right) + C\veps^4
\end{split}
\end{equation}
for $\xi \in [0,\infty)$, where we have used \eqref{Decay M}--\eqref{Bd F} in the last inequality. Multiplying \eqref{R5} by $e^{-C\xi}$, and then integrating over $[0,\xi]$, we obtain
\begin{equation}\label{R6}
\begin{split}
|{\phi_R^\veps}'|^2(\xi) +|\phi_R^\veps|^2(\xi)
& \leq  \left( |{\phi_R^\veps}'|^2(0)  +|\phi_R^\veps|^2(0) \right) e^{C\xi} + \veps^4(e^{C\xi}-1) \\
& \leq \veps^4C^2e^{C\xi} +  \veps^4(e^{C\xi}-1),
\end{split}
\end{equation}
where we have used the estimate \eqref{unibd_nuphi} and that ${\phi_R^\veps}'(0)=0$ since ${\phi^\veps}'(0)=n_{\text{KdV}}'(0) = 0$. By taking the supremum of the left-hand side of \eqref{R6} over $[0,\xi_\ast]$ for any fixed $\xi_\ast>0$, the proof is done.
\end{proof}
Next we shall investigate the uniform decay of the remainders in the far-field.
\begin{proposition}\label{Prop3}
There exist constants $\veps_1,\xi_1,C,\alpha>0$ such that
\begin{equation}\label{EqProp3}
\begin{split}
\int_{\xi_1}^\infty\left( |{\phi_R^\veps}' (\xi) |^2 + | \phi_R^\veps(\xi)|^2 \right)e^{\alpha\xi}\, d\xi \leq  C\veps^4
\end{split}
\end{equation}
for all $0<\veps<\veps_1$. Here $\alpha$ is independent of $\veps$ and $\xi$.
\end{proposition}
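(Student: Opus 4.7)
The plan is to perform a weighted energy estimate on the remainder equation \eqref{MainEq} on $[\xi_1,\infty)$, exploiting the coercivity $F_\veps\ge V\gamma$ provided by \eqref{LowerBd F}, the pointwise bound \eqref{Decay M} on $\mathcal{M}_3^\veps$, and the fact that Proposition~\ref{Prop2} already gives $|\phi_R^\veps(\xi_1)|+|{\phi_R^\veps}'(\xi_1)|\le C\veps^2$.

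Concretely, I would multiply \eqref{MainEq} by $e^{\alpha\xi}\phi_R^\veps$ and integrate over $[\xi_1,\infty)$ for a small parameter $\alpha>0$ to be chosen. Integration by parts on the $\phi_R^\veps{\phi_R^\veps}''$ term produces the identity
\begin{equation*}
\int_{\xi_1}^{\infty} e^{\alpha\xi}\bigl(|{\phi_R^\veps}'|^2 + F_\veps|\phi_R^\veps|^2\bigr)\,d\xi
= -\,e^{\alpha\xi_1}\phi_R^\veps(\xi_1){\phi_R^\veps}'(\xi_1)
-\alpha\int_{\xi_1}^{\infty} e^{\alpha\xi}\phi_R^\veps{\phi_R^\veps}'\,d\xi
-\int_{\xi_1}^{\infty} e^{\alpha\xi}\phi_R^\veps\,\mathcal{M}_3^\veps\,d\xi.
\end{equation*}
The boundary term at infinity is discarded because, by Lemma~\ref{Lemma6}, both $\phi_R^\veps$ and ${\phi_R^\veps}'$ decay like $\veps\,e^{-\widetilde C_\ast\xi}$, which dominates $e^{\alpha\xi}$ as long as $\alpha<2\widetilde C_\ast$.

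For the right-hand side I would apply Young's inequality twice: once to bound
$\alpha|\phi_R^\veps{\phi_R^\veps}'|\le \tfrac{1}{2}|{\phi_R^\veps}'|^2+\tfrac{\alpha^2}{2}|\phi_R^\veps|^2$
so that the $|{\phi_R^\veps}'|^2$ contribution is absorbed into the left-hand side and the $|\phi_R^\veps|^2$ part is controlled by $F_\veps\ge V\gamma$ provided $\alpha^2<V\gamma$; once to split
$|\phi_R^\veps\mathcal{M}_3^\veps|\le \tfrac{V\gamma}{4}|\phi_R^\veps|^2+\tfrac{1}{V\gamma}|\mathcal{M}_3^\veps|^2$,
where the first piece is again absorbed and
\[
\int_{\xi_1}^{\infty}e^{\alpha\xi}|\mathcal{M}_3^\veps|^2\,d\xi
\;\le\; C\veps^4\int_{\xi_1}^{\infty} e^{(\alpha-2\widetilde C_\ast)\xi}\,d\xi
\;\le\; C\veps^4
\]
by \eqref{Decay M} whenever $\alpha<2\widetilde C_\ast$. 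The boundary contribution
$|e^{\alpha\xi_1}\phi_R^\veps(\xi_1){\phi_R^\veps}'(\xi_1)|\le C\veps^4$
is immediate from Proposition~\ref{Prop2} with $\xi_\ast=\xi_1$. Choosing any $\alpha>0$ satisfying both $\alpha^2<V\gamma$ and $\alpha<2\widetilde C_\ast$ then yields \eqref{EqProp3}.

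The main technical point is simply reconciling the three constraints on $\alpha$—it must be small enough that the Young cross-term $\alpha^2|\phi_R^\veps|^2/2$ is absorbed by the coercivity constant $V\gamma$ from \eqref{LowerBd F}, and small enough that $e^{\alpha\xi}$ does not outpace the exponential decay of either $\phi_R^\veps$ (for the boundary term at infinity) or of $\mathcal{M}_3^\veps$ (so the forcing integral is finite of order $\veps^4$). All three conditions are compatible since $V\gamma$ and $\widetilde C_\ast$ are fixed positive constants independent of $\veps$; no subtlety beyond this needs to be handled, because the work of localizing to a region where $F_\veps$ is positive has already been done in Lemma~\ref{Lemma6}.
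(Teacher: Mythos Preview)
Your proposal is correct and follows essentially the same approach as the paper: multiply \eqref{MainEq} by $e^{\alpha\xi}\phi_R^\veps$, integrate by parts over $[\xi_1,\infty)$, use the coercivity \eqref{LowerBd F}, bound the cross terms by Young's inequality, control the boundary term at $\xi_1$ via Proposition~\ref{Prop2}, and the forcing via \eqref{Decay M}. The only cosmetic differences are in the Young splittings (the paper uses $\alpha|\phi_R^\veps{\phi_R^\veps}'|\le\frac{\alpha}{2}(|\phi_R^\veps|^2+|{\phi_R^\veps}'|^2)$, leading to the constraint $\alpha<\min\{\widetilde C_\ast,2,V\gamma\}$, whereas your splitting gives $\alpha^2<V\gamma$), but the argument and its ingredients are identical.
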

\begin{proof}
Multiplying \eqref{MainEq} by $\phi_R^\veps e^{\alpha\xi}$ for $0<\alpha<\widetilde{C}_\ast$, and then integrating the resultant from $\xi_1$ to $\infty$, where $\widetilde{C}_\ast$ and $\xi_1$  are the constants of Lemma \ref{Lemma6}, we have
\begin{equation}\label{Dia3}
\begin{split}
\int_{\xi_1}^\infty \left[ F_\veps(\xi)|\phi_R^\veps|^2 +|{\phi_R^\veps}'|^2 \right] e^{\alpha\xi} \,d\xi
 = & \int_{\xi_1}^\infty-\mathcal{M}_3^\veps\phi_R^\veps e^{\alpha\xi}  -\alpha {\phi_R^\veps}'\phi_R^\veps e^{\alpha\xi} \,d\xi \\
& - ({\phi_R^\veps}'\phi_R^\veps e^{\alpha\xi})(\xi_1),
\end{split}
\end{equation}
where we have used the fact that $\lim_{\xi \to \infty}{\phi_R^\veps}'\phi_R^\veps e^{\alpha \xi} = 0$ for all $\veps >0$, which is true from \eqref{Order nR uR phiR} since $0<\alpha<\widetilde{C}_\ast$. From \eqref{LowerBd F}, we know that
\begin{equation}\label{Dia6}
\int_{\xi_1}^\infty \left[ V\gamma|\phi_R^\veps|^2 +|{\phi_R^\veps}'|^2\right]e^{\alpha\xi} \,d\xi < \text{the LHS of \eqref{Dia3}}.
\end{equation} 
Applying Young's inequality, and then using \eqref{Decay M} and Proposition \ref{Prop2}, we see that there is a constant $C_{\xi_1}>0$ such that 
\begin{equation}\label{Dia4}
\begin{split}
\text{the RHS of \eqref{Dia3}}
& \leq \frac{V\gamma+\alpha}{2}\int_{\xi_1}^\infty |\phi_R^\veps|^2e^{\alpha\xi}\,d\xi + \frac{1}{2V\gamma}\int_{\xi_1}^\infty |\mathcal{M}_3^\veps|^2 e^{\alpha\xi}\,d\xi \\
& \quad + \frac{\alpha}{2}\int_{\xi_1}^\infty |{\phi_R^\veps}'|^2e^{\alpha\xi}\,d\xi + |({\phi_R^\veps}'\phi_R^\veps e^{\alpha\xi})|(\xi_1) \\
& \leq \frac{V\gamma+\alpha}{2}\int_{\xi_1}^\infty |\phi_R^\veps|^2e^{\alpha\xi}\,d\xi + \frac{\alpha}{2}\int_{\xi_1}^\infty |{\phi_R^\veps}'|^2e^{\alpha\xi}\,d\xi \\
&\quad  + \veps^4 C_{\xi_1,\alpha}.
\end{split}
\end{equation}
We choose $0<\alpha<\min\{\widetilde{C}_\ast,2,V\gamma\}$. By \eqref{Dia6}--\eqref{Dia4}, the proof is done.
\end{proof}

\begin{proof}[Proof of Theorem \ref{MainThm3}]
We first show that for all non-negative integers $k$, there exists constant $C_k>0$ such that 
\begin{equation}\label{MainThm31}
\int_0^\infty |{\phi_R^\veps}^{(k)}(\xi)|^2 e^{\alpha\xi}\,d\xi \leq C_k \veps^4
\end{equation}
for all $0<\veps<\veps_1$.
We prove by induction. Suppose that for $k=0,1,\cdots,n$, there exists a constant $C_k>0$ such that \eqref{MainThm31} holds.
%\begin{equation}\label{InductHypo}
%\int_{0}^\infty  |{\phi_R^\veps}^{(k)}|^2 e^{\alpha\xi}\,d\xi \leq C_{k}\veps^4
%\end{equation}
Taking the $n$-th derivative of \eqref{MainEq} in $\xi$, and then multiplying the resultant by ${\phi_R^\veps}^{(n+2)}e^{\alpha\xi}$, we have
\[
\begin{split}
\int_0^\infty |{\phi_R^\veps}^{(n+2)}|^2e^{\alpha\xi}\,d\xi
&  = \int_0^\infty\left[(F_\veps\phi_R^\veps)^{(n)} + {\mathcal{M}_3^\veps}^{(n)}\right]{\phi_R^\veps}^{(n+2)}e^{\alpha\xi}\,d\xi \\
& \leq \frac{1}{2}\int_0^\infty|{\phi_R^\veps}^{(n+2)}|^2e^{\alpha\xi}\,d\xi +  \int_0^\infty |(F_\veps\phi_R^\veps)^{(n)}|^2 e^{\alpha\xi}\,d\xi \\
&\quad  + \int_0^\infty|{\mathcal{M}_3^\veps}^{(n)}|^2 e^{\alpha\xi}\,d\xi\\ 
& \leq \frac{1}{2}\int_0^\infty|{\phi_R^\veps}^{(n+2)}|^2e^{\alpha\xi}\,d\xi +   \sum_{i=0}^nC_i\int_0^\infty |{\phi_R^\veps}^{(i)}|^2e^{\alpha\xi}\,d\xi\\
& \quad + C_n \veps^4 \\
& \leq \frac{1}{2}\int_0^\infty|{\phi_R^\veps}^{(n+2)}|^2e^{\alpha\xi}\,d\xi + C_n \veps^4,
\end{split}
\]
where we have used Young's inequality in the first inequality, \eqref{Decay M}--\eqref{Bd F} in the second inequality, and the induction hypothesis in the last inequality.
Hence, \eqref{MainThm31} holds for $k=n+2$. From Proposition \ref{Prop2}--\ref{Prop3}, we know that \eqref{MainThm31} is true for $k=0,1$, which finishes the proof of \eqref{MainThm31}.

By the fundamental theorem of calculus and the Cauchy-Schwarz inequality, we see that \eqref{MainThm31} implies that for each $k$,
\begin{equation}\label{MainThm32}
\begin{split}
\sup_{\xi \in [0,\infty)}|{\phi_R^\veps}^{(k)}(\xi)e^{\frac{\alpha}{2}\xi}|^2 
& \leq 2\| {\phi_R^\veps}^{(k)} \|_{L^2_\alpha}\left(\| {\phi_R^\veps}^{(k+1)}\|_{L^2_\alpha}+\frac{\alpha}{2}\| {\phi_R^\veps}^{(k)}\|_{L^2_\alpha} \right) \\
& \leq C_k \veps^4,
\end{split}
\end{equation}
where $\|\cdot\|_{L^2_\alpha}:=\|\cdot e^{\frac{\alpha}{2}\xi}\|_{L^2([0,\infty))}$. Now the estimates for $n_R^\veps$ and $u_R^\veps$ follow from \eqref{Eq for difference} by applying \eqref{MainThm32} and \eqref{Decay M}. Since $(n_R^\veps,u_R^\veps,\phi_R^\veps)$ is symmetric about $\xi=0$, the proof of Theorem \ref{Main Result} is done.
%\begin{equation}
%\sup_{\xi \in [0,\infty)}\left( |{n_R^\veps}^{(k)}|e^{\frac{\alpha}{2}\xi}+|{u_R^\veps}^{(k)}|e^{\frac{\alpha}{2}\xi}\right) \leq C_k \veps^2.
%\end{equation}
%Then the proof is done since $(n_R^\veps,u_R^\veps,\phi_R^\veps)$ is symmetric about $\xi=0$.

\end{proof}

%\subsection{Numerical Simulation}

\section{Appendix}
%\subsection{Numerical Simulation}
%\begin{figure}[h]
%\begin{tabular}{cc}
%\resizebox{60mm}{!}{\includegraphics{}}  & \resizebox{60mm}{!}{\includegraphics{}} \\
%(a) $\sigma=0$ and $\veps=0.1$ & (b) $\sigma=0$ and $\veps=0.01$\\
%\resizebox{60mm}{!}{\includegraphics{}} & \resizebox{60mm}{!}{\includegraphics{}} \\
%(c) $\sigma=2$ and $\veps=0.1$ & (d) $\sigma=2$ and $\veps=0.01$
%\end{tabular}
%\caption{Comparison of $\widetilde{N}_\veps$ (solid) with $n_{\text{KdV}}$ (dashed) in the stretched moving frame $\xi=\veps^{1/2}(\bar{x}-(\sqrt{1+\sigma}+\veps\bar{t}) )$.}
%\end{figure}

\subsection{Solutions of the equation \eqref{Aux3 lem-1}}
Obviously, $z=1$ is a solution to \eqref{Aux3 lem-1}. Taking the logarithm on both side of \eqref{Aux3 lem-1}, we observe that \eqref{Aux3 lem-1} is equivalent to 
\[
0=\sigma \ln z + \ln \sigma + \ln \left[(z-1)^2 + \sigma^{-1}\right]-\frac{\sigma}{2}\left(z^2-1 \right)=:f(z).
\]
By a direct calculation, we obtain
\[
\begin{split}
\frac{df}{dz}(z)
%& = \frac{\sigma}{z} + \frac{2(z-1)}{(z-1)^2 + \sigma^{-1}} - \sigma z \\
& = \frac{-\sigma(z-1)^2}{z\left[(z-1)^2 + \sigma^{-1} \right]}\left(z-\sqrt{\frac{1+\sigma}{\sigma}} \right)\left(z+\sqrt{\frac{1+\sigma}{\sigma}} \right).
\end{split}
\]
Since $f(1)=0$ and $\lim_{z \to +\infty}f(z) = -\infty$, there is a unique $\zeta_{\sigma}>\sqrt{\frac{1 + \sigma}{\sigma}}>1$ such that $f(\zeta_{\sigma})=0$ and $f(z) > 0$ for $1<z < \zeta_{\sigma}.$ This also implies that \eqref{Aux4-1 lem} holds.

%\subsection{Derivation of \eqref{AuxEp4}--\eqref{Q2Q3}} By a direct calculation, we have
%\[
%\left\{
%\begin{array}{l l}
%q^{(1)}(n) = \frac{\tilde{q}'}{\tilde{q}} + \frac{\sigma}{n} - \frac{J^2}{n^3}, \\
%q^{(2)}(n) = -\frac{2J^2}{n^3\tilde{q}} - \frac{(\tilde{q}')^2}{\tilde{q}^2} - \frac{\sigma}{n^2} + \frac{3J^2}{n^4}, \\
%q^{(3)}(n) = \frac{6J^2}{n^4\tilde{q}} + \frac{6J^2\tilde{q}'}{n^3\tilde{q}^2} + \frac{2(\tilde{q}')^3}{\tilde{q}^3} + \frac{2\sigma}{n^3} - \frac{12J^2}{n^5}, \\
%\begin{split}
%q^{(4)}(n) & =  - \frac{24J^2}{n^5\tilde{q}} - \frac{6J^2\tilde{q}'}{n^4\tilde{q}^2} -\frac{12J^2(\tilde{q}')^2}{n^3\tilde{q}^3} -\frac{6(\tilde{q}')^4}{\tilde{q}^4} \\
%& \quad  - 6J^2 \left[\frac{2J^2}{n^6\tilde{q}^2} + \frac{3\tilde{q}'}{n^4\tilde{q}^2} +\frac{2(\tilde{q}')^2}{n^3\tilde{q}^3} \right]  - \frac{6\sigma}{n^4} + \frac{60J^2}{n^6},
%\end{split}
%\end{array} 
%\right.
%\]
%where $\tilde{q}$ is defined in \eqref{AuxEp2} and $\tilde{q}'(n):=  \frac{d\tilde{q}}{dn}(n)$ for simplicity. Recall that $\tilde{q}(1)=1$ and $\tilde{q}'(n)=J^2-\sigma$. Since $q(1) =q^{(1)}(1)= 0$, we obtain \eqref{AuxEp4} and \eqref{Q2}--\eqref{Q3}. \eqref{Q4} follows by recalling that $J^2$ increases in $\veps$ and that $\tilde{q}(n)$ increases on $[1,n_s^\epsilon)$.

\subsection{Derivation of \eqref{AuxEp4}--\eqref{Q2Q3} and \eqref{EqAppen1}--\eqref{EqAppen3}} For simplicity, let $'$ denote the derivative in $n$. Taking the $k$-th derivatives of $g(n)$ in $n$, we have
\[
\left\{
\begin{array}{l l}
g^{(1)}(n) = -J^2n^{-2} + \sigma + e^{H}h , \quad g^{(2)}(n) = 2J^2n^{-3} + e^{H}h^2 + e^{H}h', \\
g^{(3)}(n) = -6J^2n^{-4} + e^{H}h^3 + 3e^{H}hh' + e^{H}h'', \\
g^{(4)}(n) = 24J^2n^{-5} + e^{H}h^4 + 6e^{H}h^2h' + 3e^{H}(h')^2   + 4e^{H}hh''+e^{H}h'''.
\end{array} 
\right.
\]
From the definitions \eqref{Def_H} and \eqref{Def_h} of $H$ and $h$, we have 
\begin{equation}\label{H derivatives}
H(1)=0, \quad h(1)=J^2-\sigma, \quad h'(1)=-3J^2+\sigma, \quad h''(1)=12J^2-2\sigma,
\end{equation}
and thus we obtain $g^{(1)}(1)=0$, $g^{(2)}(1) =  (J^2-\sigma)(J^2-1-\sigma)$ and
\[
g^{(3)}(1) = 6J^2-2\sigma + (J^2-\sigma)^3 + 3(J^2-\sigma)(-3J^2+\sigma).
\]
By the definition \eqref{J} of $J$, a direct calculation yields \eqref{Q2}--\eqref{Q3}. By the above calculation and the definitions of $H$, $h$ and $J$, \eqref{Q4} is clear owing to the estimate \eqref{unibd_nuphi}. In a similar fashion, \eqref{EqAppen1} and \eqref{EqAppen3} follow.

\subsection{Derivation of \eqref{EqAppen9}} Let $q(n):= n-e^{H(n)}$. A direct calculation yields that
\[
q^{(1)}(n)=1-e^{H}h, \quad q^{(2)}(n)=(-h^2 -  h')e^H, \quad q^{(3)}=(-h^3-3hh'-h'')e^H.
\] 
Using \eqref{H derivatives} and the definition \eqref{J} of $J$, we obtain from the Taylor theorem that
\begin{equation}\label{Ap44}
\begin{split}
\frac{1}{\veps^2}(n^\veps-e^{H(n^\veps)}) 
& = \frac{1}{\veps^2}\left[ (1+\sigma-J^2)(n^\veps-1) \right. \\
& \quad \left. - \frac{1}{2}((J^2-\sigma)^2-3J^2+\sigma)(n^\veps-1)^2   + g^{(3)}(n_b^\veps)(n^\veps-1)^3 \right]\\
& = \left( -2V\gamma + O(\veps) \right)\widetilde{N}_\veps  +\left( 1+\sigma +O(\veps)\right)\widetilde{N}_\veps^2 +\veps g^{(3)}(n_b^\veps)\widetilde{N}_\veps^3
\end{split}
\end{equation}
for some $1<n_b^\veps<n^\veps$. By \eqref{unibd_nuphi}, we obtain \eqref{EqAppen9}.
\subsection{Derivation of \eqref{R1R2estimat}} It is enough to find the second-order partial derivatives of $\mathcal{R}_1$ and $\mathcal{R}_2$. By a direct calculation, we have   $\partial^2_E\mathcal{R}_1=\partial_{nE}\mathcal{R}_2 = \partial_E^2 \mathcal{R}_2 = 0$ and
%\[
%\begin{split}
%\begin{pmatrix}
%\mathcal{R}_1 \\
%\mathcal{R}_2
%\end{pmatrix} 
%& = \begin{pmatrix}
% \partial_{n}^2\mathcal{R}_1(n^\veps_{b1},E^\veps_{b1})\frac{(n^\veps-1)^2}{2} + \partial_{n}\partial_{E}\mathcal{R}_1(n^\veps_{b1},E^\veps_{b1})(n^\veps-1)E^\veps   \\
%\partial_{n}^2\mathcal{R}_2(n^\veps_{b2},E^\veps_{b2})\frac{(n^\veps-1)^2}{2}
%\end{pmatrix}
%\end{split}
%\]
%for some point $(n_{bi}^\veps,E_{bi}^\veps)$ for $i=1,2$, where 
\[
\left\{
\begin{array}{l l}
\partial_{n}^2\mathcal{R}_1 = Eh^{-2}h'' - 2Eh^{-3}(h')^2 , \quad \partial_{nE} \mathcal{R}_1 = h'h^{-2}, \\
\partial_{n}^2\mathcal{R}_2 = -\veps^{-1}e^{H}(h^2 +h' ),
\end{array}
\right.
\]
Hence, \eqref{R1R2estimat} follows from the definitions of $h$ and $H$ the estimate \eqref{unibd_nuphi}.

%By applying \eqref{unibd_nuphi}, it is clear that there is a constant $C>0$ independent of $\veps$ and $\xi$ such that \eqref{R1R2estimat} holds for all $0<\veps<\veps_2$ as long as $(n_{bi}^\veps,E_{bi}^\veps)$ lies on the line segment adjoining $(1,0)$ and $(n^\veps,E^\veps)$.

  \section*{Acknowledgments.}
B. K. was supported by Basic Science Research Program through the National Research Foundation of Korea(NRF) funded by the Ministry of science, ICT and future planning(NRF-2017R1E1A1A03070406).

\bibliographystyle{amsplain}

%\begin{figure}
%    \centering
%        \includegraphics[scale=0.7]{Graph1}
%        \caption{Graph of $g(n)-g(1)$ with $%\sigma=0.9$, $\veps=\frac{1}{20}$}
%\end{figure}

%\begin{figure}
%    \centering
%        \includegraphics[scale=0.7]{Graph2}
%        \caption{Graph of $g(n)-g(1)$ with $\sigma=0.9$, $\veps=0$}
% \end{figure}

%\begin{figure}    \centering        \includegraphics[scale=0.7]{Graph3}        \caption{Graph of $g(n)-g(1)$ with $\sigma=0$, $\veps=\frac{1}{10}$} \end{figure}

%\begin{figure}    \centering        \includegraphics[scale=0.7]{Graph4}        \caption{Graph of $g(n)-g(1)$ with $\sigma=0$, $\veps=0$}    \end{figure}
 \end{document}